\newcommand{\mzero}{\mathbf O }
\newcommand{\vzero}{\mathbf 0 }
\newcommand{\hatdSmR}[2]{\widehat{\mathbf d}^{#1}}
\newcommand{\cond}[2]{\textup{({#1}{\scriptsize {#2}})}}
\newtheorem{remark}[theorem]{Remark}
\newtheorem{example}[theorem]{Example}
\title{On the Uniqueness of the Canonical Polyadic Decomposition of  third-order tensors --- Part II: Uniqueness of the overall decomposition
\thanks{Research supported by: (1) Research Council KU Leuven:
GOA-Ambiorics,  GOA-MaNet,  CoE EF/05/006 Optimization in Engineering (OPTEC),
CIF1,  STRT 1/08/23,  (2) F.W.O.: (a) project G.0427.10N,  (b) Research Communities
ICCoS,  ANMMM and MLDM,  (3) the Belgian Federal Science Policy
Office: IUAP P6/04 (DYSCO,  ``Dynamical systems,  control and
optimization'',  2007--2011),  (4) EU: ERNSI.}}
\author{Ignat Domanov\footnotemark[2] \footnotemark[3]
\and
Lieven De Lathauwer\footnotemark[2] \footnotemark[3]}
\begin{document}

\maketitle

\renewcommand{\thefootnote}{\fnsymbol{footnote}}

\footnotetext[2]{Group Science,  Engineering and Technology,  KU Leuven Campus Kortrijk,
Etienne Sabbelaan 53,  8500 Kortrijk,  Belgium,
 ({\tt ignat.domanov,\ lieven.delathauwer@kuleuven-kulak.be}).}
\footnotetext[3]{Department of Electrical Engineering (ESAT),  SCD,  KU Leuven,
Kasteelpark Arenberg 10,  postbus 2440,  B-3001 Heverlee (Leuven),  Belgium.}

\begin{abstract}
Canonical Polyadic (also known as Candecomp/Parafac) Decomposition (CPD) of a higher-order tensor is  decomposition in a minimal number of rank-$1$ tensors. In Part I, we gave an overview of existing results concerning uniqueness and presented new, relaxed, conditions that guarantee uniqueness of one factor matrix. In Part II we use these results for establishing overall CPD uniqueness in cases where none of the factor matrices has full column rank. We obtain uniqueness conditions involving Khatri-Rao products of compound matrices and Kruskal-type conditions.
%
%
We consider both deterministic and generic uniqueness. We also discuss uniqueness of INDSCAL and other constrained polyadic decompositions.
\end{abstract}

\begin{keywords}
Canonical Polyadic Decomposition, Candecomp, Parafac, three-way array, tensor, multilinear algebra, Khatri-Rao product, compound matrix
\end{keywords}

\begin{AMS}
15A69, 15A23
\end{AMS}

\pagestyle{myheadings}
\thispagestyle{plain}
\markboth{IGNAT DOMANOV AND LIEVEN DE LATHAUWER}{On the Uniqueness of the Canonical Polyadic Decomposition --- Part II}

\section{Introduction}
\subsection{Problem statement}\label{Subsection1.1}
Throughout the paper $\mathbb F$ denotes the field of real or complex numbers;
$(\cdot)^*$, $(\cdot)^T$, and $(\cdot)^H$ denote conjugate, transpose, and conjugate transpose, respectively;
$r_{\mathbf A}$,  $\textup{range}(\mathbf A)$, and
 $\textup{ker}(\mathbf A)$ denote the rank, the range, and the null space of a matrix $\mathbf A$, respectively;
$\textup{\text{Diag}}(\mathbf d)$ denotes a square diagonal matrix with the elements of a vector $\mathbf d$ on the main diagonal;
$\text{span}\{\mathbf f_1, \dots, \mathbf f_k\}$ denotes the linear span of the vectors $\mathbf f_1, \dots, \mathbf f_k$;
$\mathbf e_r^R$ denotes the $r$-th vector of the canonical basis of $\mathbb F^R$;
$C_n^k$ denotes the binomial coefficient,  $C_n^k=\frac{n!}{k!(n-k)!}$;
$\mzero_{m\times n}$, $\vzero_m$, and $\mathbf I_n$ are the zero $m\times n$ matrix, the zero $m\times 1$ vector, and the $n\times n$ identity matrix, respectively.

We have the following basic definitions. A third-order tensor $\mathcal T=(t_{ijk})\in\mathbb F^{I\times J\times K}$ is {\em rank-$1$}
if there exist  three nonzero vectors $\mathbf a\in\mathbb F^I$,
$\mathbf b\in\mathbb F^J$ and $\mathbf c\in\mathbb F^K$ such that
$\mathcal T=\mathbf a\circ\mathbf b\circ\mathbf c$,
in  which ``$\circ$'' denotes the {\em outer product}. That is,  $t_{ijk} = a_i b_j c_k$ for all values of the indices.

A {\em Polyadic Decomposition} (PD) of a third-order tensor $\mathcal T\in\mathbb F^{I\times J\times K}$ expresses $\mathcal T$ as a sum of  rank-$1$ terms:
\begin{equation}
\mathcal T=\sum\limits_{r=1}^R\mathbf a_r\circ \mathbf b_r\circ \mathbf c_r,
 \label{eqintro2}
\end{equation}
where
$\mathbf a_r \in \mathbb F^{I}$, $\mathbf b_r \in \mathbb F^{J}$, $\mathbf c_r \in \mathbb F^{K}$ are nonzero vectors.

We call the matrices
$\mathbf A =\left[\begin{matrix}\mathbf a_1&\dots&\mathbf a_R\end{matrix}\right] \in\mathbb F^{I\times R}$, $\mathbf B =\left[\begin{matrix}\mathbf b_1&\dots&\mathbf b_R\end{matrix}\right]\in\mathbb F^{J\times R}$ and $\mathbf C =\left[\begin{matrix}\mathbf c_1&\dots&\mathbf c_R\end{matrix}\right]\in\mathbb F^{K\times R}$
the {\em first, second} and {\em third factor matrix} of $\mathcal T$, respectively. We also write (\ref{eqintro2}) as $\mathcal T=[\mathbf A,\mathbf B,\mathbf C]_R$.

\begin{definition}
The {\em rank} of a tensor $\mathcal T \in\mathbb F^{I\times J\times K}$ is defined
as the minimum number of rank-$1$ tensors in a PD  of  $\mathcal T$ and is denoted by $r_{\mathcal T}$.
\end{definition}
\begin{definition}
A {\em Canonical Polyadic  Decomposition} (CPD) of a third-order tensor $\mathcal T$ expresses $\mathcal T$ as a minimal sum of rank-$1$ terms.
\end{definition}

Note that $\mathcal T=[\mathbf A,\mathbf B,\mathbf C]_R$ is a CPD of $\mathcal T$ if and only if $R=r_{\mathcal T}$.

Let us reshape $\mathcal T$ into a  matrix $\mathbf T\in\mathbb F^{IJ\times K}$ as follows:
the $\left(i,j,k\right)$-th entry of $\mathcal T$ corresponds to
the $\left((i-1)J+j,k\right)$-th entry of $\mathbf T$.
In particular, the rank-1 tensor $\mathbf a\circ\mathbf b\circ\mathbf c$ corresponds to
the rank-1 matrix $(\mathbf a\otimes\mathbf b)\mathbf c^T$, in which
``$\otimes$'' denotes the {\em Kronecker product}.
Thus, \eqref{eqintro2} can be identified  with
\begin{equation}
\mathbf T^{(1)}:=\mathbf T=\sum\limits_{r=1}^R(\mathbf a_r\otimes \mathbf b_r){\mathbf c}_r^T=
[\mathbf a_1\otimes\mathbf b_1\ \cdots\ \mathbf a_R\otimes\mathbf b_R]\mathbf C^T=
(\mathbf A\odot\mathbf B)\mathbf C^T,
\label{eqT_V}
\end{equation}
in which ``$\odot$'' denotes the {\em Khatri-Rao product} or column-wise Kronecker product.
Similarly, one can reshape $\mathbf a\circ\mathbf b\circ\mathbf c$  into any of the matrices
$$
(\mathbf b\otimes\mathbf c)\mathbf a^T,\quad
(\mathbf c\otimes\mathbf a)\mathbf b^T,\quad
(\mathbf a\otimes\mathbf c)\mathbf b^T,\quad
(\mathbf b\otimes\mathbf a)\mathbf c^T,\quad
(\mathbf c\otimes\mathbf b)\mathbf a^T
$$
and obtain the factorizations
\begin{equation}
\mathbf T^{(2)}=(\mathbf B\odot\mathbf C)\mathbf A^T, \qquad \mathbf T^{(3)}=(\mathbf C\odot\mathbf A)\mathbf B^T,
\qquad \mathbf T^{(4)}=(\mathbf A\odot\mathbf C)\mathbf B^T\qquad \text{etc.}
\label{eqT_V2}
\end{equation}
The matrices $\mathbf T^{(1)}$, $\mathbf T^{(2)},\dots$ are called
{\em the matrix representations} or {\em matrix unfoldings} of the tensor $\mathcal T$.

It is clear that in \eqref{eqintro2}--\eqref{eqT_V} the rank-1 terms can be arbitrarily permuted and that vectors within the same rank-1 term can be arbitrarily scaled provided the overall rank-1 term remains the same. {\em The CPD of a tensor is  unique} when it is only subject to these trivial indeterminacies. Formally, we have the following definition.

\begin{definition}\label{Def:1.5}
Let $\mathcal T$ be a tensor of rank $R$. The CPD  of  $\mathcal T$ is {\em essentially unique} if
$\mathcal T=[\mathbf A,\mathbf B,\mathbf C]_R=[\bar{\mathbf A},\bar{\mathbf B},\bar{\mathbf C}]_R$ implies that there exist an $R\times R$ permutation matrix $\mathbf{\Pi}$ and $R\times R$ nonsingular diagonal matrices
${\mathbf \Lambda}_{\mathbf A}$, ${\mathbf \Lambda}_{\mathbf B}$, and ${\mathbf \Lambda}_{\mathbf C}$ such that
$$
\bar{\mathbf A}=\mathbf A\mathbf{\Pi}{\mathbf \Lambda}_{\mathbf A},\quad
\bar{\mathbf B}=\mathbf B\mathbf{\Pi}{\mathbf \Lambda}_{\mathbf B},\quad
\bar{\mathbf C}=\mathbf C\mathbf{\Pi}{\mathbf \Lambda}_{\mathbf C},\quad
{\mathbf \Lambda}_{\mathbf A}{\mathbf \Lambda}_{\mathbf B}{\mathbf \Lambda}_{\mathbf C}=\mathbf I_R.
$$
\end{definition}
PDs can also be partially unique. That is, a factor matrix may be essentially unique without the overall PD being essentially unique.
We will resort to the following definition.
\begin{definition}\label{def1.6}
Let $\mathcal T$ be a tensor of rank $R$. The  first (resp. second or third) factor matrix of  $\mathcal T$ is {\em essentially unique}
if $\mathcal T=[\mathbf A,\mathbf B,\mathbf C]_R=[\bar{\mathbf A},\bar{\mathbf B},\bar{\mathbf C}]_R$ implies that there exist an $R\times R$ permutation matrix
$\mathbf{\Pi}$ and an $R\times R$ nonsingular diagonal matrix ${\mathbf \Lambda}_{\mathbf A}$ (resp. ${\mathbf \Lambda}_{\mathbf B}$ or ${\mathbf \Lambda}_{\mathbf C}$) such that
$$
\bar{\mathbf A}=\mathbf A\mathbf{\Pi}{\mathbf \Lambda}_{\mathbf A} \quad (\text{resp.} \quad
\bar{\mathbf B}=\mathbf B\mathbf{\Pi}{\mathbf \Lambda}_{\mathbf B} \quad \ \text{ or }\ \
\bar{\mathbf C}=\mathbf C\mathbf{\Pi}{\mathbf \Lambda}_{\mathbf C}).
$$
\end{definition}
For  brevity, in the sequel we drop the term ``essential'', both when it concerns the uniqueness of the overall CPD and when it concerns the uniqueness of one factor matrix.

In this paper we present both deterministic and generic uniqueness results. Deterministic conditions concern one particular PD $\mathcal T=[\mathbf A,\mathbf B,\mathbf C]_R$. For generic uniqueness we resort to the following definitions.

\begin{definition}\label{def1.7}
Let $\mu$ be the Lebesgue measure on $\mathbb F^{(I+J+K)R}$.
The CPD  of an $I\times J\times K$ tensor of rank $R$ is $generically$ $unique$ if
$$
\mu\{(\mathbf A,\mathbf B,\mathbf C):\ \text{the CPD }  \text{of the tensor }\ [\mathbf A,\mathbf B,\mathbf C]_R
\text{ is not unique }\}=0.
$$
\end{definition}
\begin{definition}\label{def1.8}
Let $\mu$ be the Lebesgue measure on $\mathbb F^{(I+J+K)R}$.
The  first (resp. second or third) factor matrix of an $I\times J\times K$ tensor  of rank $R$ is $generically$ $unique$ if
\begin{equation*}
\begin{split}
\mu\left\{ (\mathbf A,\mathbf B,\mathbf C):\ \right.&\text{the first (resp. second or third) factor matrix } \\
  &\left.\text{of the tensor}\ [\mathbf A,\mathbf B,\mathbf C]_R
\text{ is not unique}\right\}=0.
\end{split}
\end{equation*}
\end{definition}

Let the matrices $\mathbf A\in\mathbb F^{I\times R}$, $\mathbf B\in\mathbb F^{J\times R}$ and $\mathbf C\in\mathbb F^{K\times R}$ be randomly sampled from a continuous distribution.
Generic uniqueness then means uniqueness that holds with probability one.

\subsection{Literature overview}\label{Subsection1.3}
  We refer to the overview papers  \cite{KoldaReview,2009Comonetall,Lieven_ISPA} and the references therein for background,  applications and algorithms for CPD.
Here, we focus on results concerning  uniqueness of the CPD.
\subsubsection{Deterministic conditions}
We refer to \cite[Subsection 1.2]{Part I}  for a
detailed overview of deterministic conditions. Here we just recall
three Kruskal theorems and new results from \cite{Part I} that concern the  uniqueness of one factor matrix.
To present Kruskal's theorem we recall the definition of $k$-rank.
\begin{definition}\label{defkrank}
The $k$-rank  of a matrix $\mathbf A$ is the largest number $k_{\mathbf A}$ such that
every subset of $k_{\mathbf A}$ columns of the matrix $\mathbf A$ is  linearly independent.
\end{definition}

Kruskal's theorem states the following.
\begin{theorem}\cite[Theorem 4a, p. 123]{Kruskal1977}\label{theoremKruskal}
Let $\mathcal T=[\mathbf A,\mathbf B,\mathbf C]_R$  and let
\begin{equation}
k_{\mathbf A}+k_{\mathbf B}+k_{\mathbf C}\geq 2R+2.
 \label{Kruskal}
 \end{equation}
Then $r_{\mathcal T}=R$ and the CPD of  $\mathcal T=[\mathbf A,\mathbf B,\mathbf C]_R$ is  unique.
 \end{theorem}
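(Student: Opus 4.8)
The plan is to prove simultaneously that $R$ is minimal and that the decomposition is essentially unique, by showing that \emph{every} polyadic decomposition $\mathcal T=[\bar{\mathbf A},\bar{\mathbf B},\bar{\mathbf C}]_{\bar R}$ with $\bar R\le R$ must in fact have $\bar R=R$ and be related to $[\mathbf A,\mathbf B,\mathbf C]_R$ by a single permutation and compensating scalings as in Definition~\ref{Def:1.5}. First I would record the trivial but essential facts that $k_{\mathbf A},k_{\mathbf B},k_{\mathbf C}\ge 1$ (the columns are nonzero) and that the Khatri--Rao structure of the unfoldings in \eqref{eqT_V}--\eqref{eqT_V2} puts the three factors on a completely symmetric footing. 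Because of this cyclic symmetry it suffices to prove the matching statement for a single factor, say $\mathbf A$ against $\bar{\mathbf A}$, and then to reapply the argument with the roles of the factors permuted.

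The engine of the proof is Kruskal's permutation lemma, which I would set up as the final step to invoke. Writing $Z(\mathbf v)$ for the set of positions of the zero entries of a row vector $\mathbf v$, the lemma asserts that if $\mathbf A$ has no zero columns and if, for every row vector $\mathbf x^T$, the implication
$|Z(\mathbf x^T\mathbf A)|\ge R-k_{\mathbf A}+1\ \Rightarrow\ Z(\mathbf x^T\mathbf A)\subseteq Z(\mathbf x^T\bar{\mathbf A})$
holds, then $\bar{\mathbf A}=\mathbf A\,\mathbf\Pi_{\mathbf A}\mathbf\Lambda_{\mathbf A}$ for a permutation $\mathbf\Pi_{\mathbf A}$ and a nonsingular diagonal $\mathbf\Lambda_{\mathbf A}$; in particular the columns of $\bar{\mathbf A}$ are a scaled permutation of those of $\mathbf A$, and in its form accommodating $\bar R\le R$ the induced map is onto, forcing $\bar R=R$. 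The whole problem thus reduces to verifying this zero-pattern containment.

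To verify it I would exploit the identity $(\mathbf A\odot\mathbf B)\mathbf C^T=(\bar{\mathbf A}\odot\bar{\mathbf B})\bar{\mathbf C}^T$ together with a counting lemma that bounds, for any index set $S\subseteq\{1,\dots,R\}$, how many of the barred rank-one terms can lie in $\operatorname{span}\{\mathbf a_r\otimes\mathbf b_r:\ r\in S\}$. The two key ingredients are the subadditive $k$-rank bound $k_{\mathbf A\odot\mathbf B}\ge\min(k_{\mathbf A}+k_{\mathbf B}-1,\,R)$ for Khatri--Rao products, which controls the dimension of such spans, and the hypothesis \eqref{Kruskal} rewritten as $k_{\mathbf A}-1\ge(2R+1)-(k_{\mathbf B}+k_{\mathbf C})$, which supplies exactly the slack needed: a row vector $\mathbf x$ whose zero set on $\mathbf A$ is large is forced, through the surviving columns of $\mathbf B$ and $\mathbf C$ and the tensor identity, to inherit that zero pattern on $\bar{\mathbf A}$. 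Carrying this out delivers the permutation-lemma hypothesis and hence the matching of $\mathbf A$ with $\bar{\mathbf A}$; cyclic symmetry gives the analogous matchings for $\mathbf B$ and $\mathbf C$.

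Finally, having matched each factor separately, I would reconcile the three permutations $\mathbf\Pi_{\mathbf A},\mathbf\Pi_{\mathbf B},\mathbf\Pi_{\mathbf C}$: since all three arise from the single rank-one decomposition of $\mathcal T$, the $r$-th barred term is proportional to exactly one unbarred term, which forces $\mathbf\Pi_{\mathbf A}=\mathbf\Pi_{\mathbf B}=\mathbf\Pi_{\mathbf C}=:\mathbf\Pi$ and $\mathbf\Lambda_{\mathbf A}\mathbf\Lambda_{\mathbf B}\mathbf\Lambda_{\mathbf C}=\mathbf I_R$; together with $\bar R=R$ for every admissible $\bar R\le R$ this yields both $r_{\mathcal T}=R$ and essential uniqueness. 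I expect the main obstacle to be the verification of the permutation-lemma hypothesis in the third paragraph --- the simultaneous bookkeeping of the three $k$-ranks and the extraction of the exact $2R+2$ threshold without any slack. This delicate combinatorial count is the hard core of Kruskal's original argument, and making the dimension inequalities line up precisely is where the real difficulty lies.
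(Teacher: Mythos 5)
The first thing to compare against is easy to state: the paper contains no proof of Theorem \ref{theoremKruskal} at all. The statement is imported verbatim from \cite{Kruskal1977}, and the only sense in which the present paper establishes it is indirect: since $r_{\mathbf X}\geq k_{\mathbf X}$ for any matrix $\mathbf X$, condition \eqref{Kruskal} implies \eqref{kruskalrelaxed}, so the theorem is a special case of Corollary \ref{corr:1.30}, which the paper derives by a genuinely different route (Kruskal-type conditions imply compound-matrix conditions, these give uniqueness of individual factor matrices via Proposition \ref{Prop:1.16}, and the overall CPD uniqueness then follows from Propositions \ref{proponematrixisunique} and \ref{proptwomatrixisunique} on PDs sharing factor matrices). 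Your proposal instead tries to reconstruct Kruskal's original argument --- permutation lemma plus combinatorial counting --- which is a legitimate alternative route, so it must be judged on its own merits.

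Judged on its own merits, it has a fatal flaw: the permutation lemma you plan to invoke is mis-stated, in a way that makes your strategy unworkable rather than merely imprecise. You require that for every $\mathbf x$ with $|Z(\mathbf x^T\mathbf A)|\geq R-k_{\mathbf A}+1$ the \emph{position-wise} containment $Z(\mathbf x^T\mathbf A)\subseteq Z(\mathbf x^T\bar{\mathbf A})$ holds. This hypothesis is not invariant under a relabeling of the barred terms, whereas anything you can hope to derive from the equality $[\mathbf A,\mathbf B,\mathbf C]_R=[\bar{\mathbf A},\bar{\mathbf B},\bar{\mathbf C}]_R$ must be, because the ordering of the barred terms is arbitrary. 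Concretely, take $R=2$ and $\mathbf A=\mathbf B=\mathbf C=\mathbf I_2$, so that \eqref{Kruskal} holds with equality, and let the barred decomposition be the same two rank-one terms listed in the opposite order, i.e., $\bar{\mathbf A}=\bar{\mathbf B}=\bar{\mathbf C}=\left[\begin{matrix}\mathbf e_2^2&\mathbf e_1^2\end{matrix}\right]$. For $\mathbf x=\mathbf e_1^2$ the trigger fires, since $|Z(\mathbf x^T\mathbf A)|=1\geq R-k_{\mathbf A}+1=1$, yet $Z(\mathbf x^T\mathbf A)=\{2\}\not\subseteq\{1\}=Z(\mathbf x^T\bar{\mathbf A})$. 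So the hypothesis of your lemma is \emph{false} for two decompositions of the same tensor satisfying \eqref{Kruskal}, and consequently no amount of work in your third paragraph can derive it from the tensor identity. The correct lemma compares only counts, never positions: one standard form assumes that $\bar{\mathbf A}$ has no zero columns and that, writing $\omega(\cdot)$ for the number of nonzero entries, every $\mathbf x$ with $\omega(\mathbf x^T\bar{\mathbf A})\leq R-r_{\mathbf A}+1$ satisfies $\omega(\mathbf x^T\mathbf A)\leq\omega(\mathbf x^T\bar{\mathbf A})$; only then does it conclude $\bar{\mathbf A}=\mathbf A\mathbf{\Pi}\mathbf{\Lambda}$. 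Beyond this, the step you summarize as ``carrying this out delivers the permutation-lemma hypothesis'' is not a detail: it is the entire body of Kruskal's proof, the lengthy counting argument in which \eqref{Kruskal} and the bound $k_{\mathbf A\odot\mathbf B}\geq\min(k_{\mathbf A}+k_{\mathbf B}-1,R)$ are actually deployed. As written, your proposal asserts that step rather than performs it, so even after repairing the lemma, what remains is a roadmap to Kruskal's proof with its central argument missing.
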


Kruskal also obtained the  following more general results which are less known.

\begin{theorem}\cite[Theorem 4b, p. 123]{Kruskal1977}\label{theoremKruskalnew1} (see also Corollary \ref{Proptwokkk} below)
Let $\mathcal T=[\mathbf A,\mathbf B,\mathbf C]_R$  and let
\begin{align*}
\ \ \min(k_{\mathbf A},k_{\mathbf C})+r_{\mathbf B}&\geq R+2,\\
\ \ \min(k_{\mathbf A},k_{\mathbf B})+r_{\mathbf C}&\geq R+2,\\
r_{\mathbf A}+r_{\mathbf  B}+r_{\mathbf C}&\geq 2R+2+\min(r_{\mathbf A}-k_{\mathbf A},r_{\mathbf B}-k_{\mathbf B}),\\
r_{\mathbf A}+r_{\mathbf  B}+r_{\mathbf C}&\geq 2R+2+\min(r_{\mathbf A}-k_{\mathbf A},r_{\mathbf C}-k_{\mathbf C}).
\end{align*}
Then $r_{\mathcal T}=R$ and the CPD of  $\mathcal T=[\mathbf A,\mathbf B,\mathbf C]_R$ is  unique.
\end{theorem}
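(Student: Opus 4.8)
The plan is to establish overall uniqueness by proving that two of the three factor matrices are each essentially unique and then showing that the permutations they produce coincide; the engine throughout is Kruskal's permutation lemma (equivalently, the single-factor uniqueness results announced from Part~I, which rest on it). So I begin with an arbitrary competing decomposition $\mathcal T=[\mathbf A,\mathbf B,\mathbf C]_R=[\bar{\mathbf A},\bar{\mathbf B},\bar{\mathbf C}]_R$ and work toward a common permutation matrix $\mathbf\Pi$ and diagonal scalings as in Definition~\ref{Def:1.5}. The minimality statement $r_{\mathcal T}=R$ I would dispatch first, from the same rank bookkeeping used below: a representation of $\mathcal T$ with fewer than $R$ rank-$1$ terms would force a column subset of one of the products $\mathbf A\odot\mathbf B$, $\mathbf B\odot\mathbf C$, $\mathbf C\odot\mathbf A$ to be rank-deficient beyond what the stated rank inequalities permit.

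Recall that the permutation lemma concludes $\bar{\mathbf C}=\mathbf C\mathbf\Pi_{\mathbf C}\mathbf\Lambda_{\mathbf C}$ once $k_{\mathbf C}\ge 1$, $\bar{\mathbf C}$ has no zero columns, and a support comparison holds: for every sufficiently sparse vector $\mathbf x$, the nonzero pattern of $\bar{\mathbf C}^T\mathbf x$ is contained in that of $\mathbf C^T\mathbf x$. My first main step is to verify this comparison for the third factor. The point is that a vector $\mathbf x$ for which $\mathbf C^T\mathbf x$ has many zeros corresponds to a linear combination that annihilates many rank-$1$ terms; controlling how few terms can survive is a statement about the ranks of column subsets of $\mathbf A\odot\mathbf B$, and the inequalities $\min(k_{\mathbf A},k_{\mathbf B})+r_{\mathbf C}\ge R+2$ together with $r_{\mathbf A}+r_{\mathbf B}+r_{\mathbf C}\ge 2R+2+\min(r_{\mathbf A}-k_{\mathbf A},r_{\mathbf C}-k_{\mathbf C})$ are exactly calibrated to supply the needed bound. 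This yields essential uniqueness of $\mathbf C$. Running the symmetric argument with $\mathbf B$ and $\mathbf C$ interchanged, now driven by $\min(k_{\mathbf A},k_{\mathbf C})+r_{\mathbf B}\ge R+2$ and $r_{\mathbf A}+r_{\mathbf B}+r_{\mathbf C}\ge 2R+2+\min(r_{\mathbf A}-k_{\mathbf A},r_{\mathbf B}-k_{\mathbf B})$, gives essential uniqueness of $\mathbf B$, say $\bar{\mathbf B}=\mathbf B\mathbf\Pi_{\mathbf B}\mathbf\Lambda_{\mathbf B}$.

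It then remains to reconcile the permutations, $\mathbf\Pi_{\mathbf B}=\mathbf\Pi_{\mathbf C}=:\mathbf\Pi$; granting this, substitution into the unfolding $\mathbf T^{(2)}=(\mathbf B\odot\mathbf C)\mathbf A^T$ and the fact that a Kronecker product $\mathbf b_r\otimes\mathbf c_r$ fixes each of $\mathbf b_r,\mathbf c_r$ up to scale forces $\bar{\mathbf A}=\mathbf A\mathbf\Pi\mathbf\Lambda_{\mathbf A}$ with $\mathbf\Lambda_{\mathbf A}\mathbf\Lambda_{\mathbf B}\mathbf\Lambda_{\mathbf C}=\mathbf I_R$, which is exactly Definition~\ref{Def:1.5}. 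I expect this reconciliation to be the main obstacle, and the difficulty is structural: since none of $\mathbf A,\mathbf B,\mathbf C$ need have full column rank, one cannot normalize $\bar{\mathbf C}=\mathbf C$ and then equate rank-$1$ terms slicewise, because the columns $\mathbf c_r$ satisfy nontrivial linear relations and the terms cannot be separated. The way through is not to treat the two single-factor conclusions independently but to feed both competing decompositions into the support argument at once, so that the pairing between the $\mathbf b$-columns and the $\mathbf c$-columns is pinned down simultaneously; here the overlap conditions $\min(k_{\mathbf A},k_{\mathbf B})+r_{\mathbf C}\ge R+2$ and $\min(k_{\mathbf A},k_{\mathbf C})+r_{\mathbf B}\ge R+2$, which tie $k_{\mathbf A}$ to the remaining factors, are what rule out a mismatch: a discrepancy $\mathbf\Pi_{\mathbf B}\ne\mathbf\Pi_{\mathbf C}$ would manufacture a rank-$1$ term incompatible with $r_{\mathcal T}=R$.
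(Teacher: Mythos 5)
Your high-level architecture is in fact the same as the paper's. There, Theorem \ref{theoremKruskalnew1} is observed to be equivalent, by elementary algebra, to Corollary \ref{Proptwokkk}, whose proof runs exactly along your lines: the hypotheses yield Kruskal-type conditions $(\textup{K{\scriptsize m}})$ for two of the three pairs of factor matrices, whence (through Propositions \ref{Prop:KCUW} and \ref{Prop:1.16}, i.e.\ the Part~I permutation-lemma machinery you appeal to) $r_{\mathcal T}=R$ and essential uniqueness of two factor matrices follow; the two permutations are then reconciled by Proposition \ref{proptwomatrixisunique}. Two remarks on your first stage. First, $r_{\mathcal T}=R$ comes bundled with the cited single-factor uniqueness results, so your separate ``rank bookkeeping'' sketch is unnecessary and, as stated, is not a proof. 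Second, your clean assignment (conditions 2 and 4 give $\mathbf C$; conditions 1 and 3 give $\mathbf B$) does not survive resolving the $\min$'s on the right-hand sides: if $k_{\mathbf A}+r_{\mathbf B}+r_{\mathbf C}<2R+2$, then conditions 3 and 4 force $r_{\mathbf A}+k_{\mathbf B}+r_{\mathbf C}\geq 2R+2$ and $r_{\mathbf A}+r_{\mathbf B}+k_{\mathbf C}\geq 2R+2$, and the case analysis then delivers uniqueness of $\mathbf A$ and $\mathbf C$, not of $\mathbf B$ and $\mathbf C$. This is repairable, since the reconciliation step is symmetric in which pair of factors one holds.

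The genuine gap is the reconciliation step itself, which you correctly flag as the main obstacle and then settle with one unsubstantiated sentence. The assertion that a discrepancy $\mathbf{\Pi}_{\mathbf B}\neq\mathbf{\Pi}_{\mathbf C}$ ``would manufacture a rank-$1$ term incompatible with $r_{\mathcal T}=R$'' is not an argument: when $r_{\mathbf B},r_{\mathbf C}<R$ the columns of $\mathbf B$ and $\mathbf C$ satisfy nontrivial linear relations, the mismatched pairings $\mathbf b_{\pi(r)}\otimes\mathbf c_r$ cannot be isolated from one another, and subtracting the two decompositions produces no visible rank-one excess, so nothing contradicts minimality of $R$. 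Your alternative suggestion (running the support argument on both decompositions ``at once'' to pin down the pairing) is the right instinct, but it is precisely the hard part and is not developed. What this step requires is the content of Proposition \ref{proptwomatrixisunique}, whose hypotheses \eqref{eq5.two} (after permuting the roles of the factors so that the two common matrices are the ones you have shown unique) do follow from yours: condition 1 gives $k_{\mathbf C}\geq R+2-r_{\mathbf B}\geq 2$, and condition 2 gives $\min(k_{\mathbf A}-1,k_{\mathbf B})+r_{\mathbf C}\geq R+1$. But its proof is the paper's main technical contribution (\S\ref{subsection5.2}): one contracts the tensor with the columns of a right inverse of a basis submatrix of $\mathbf C^T$ and shows, by an induction exploiting these $k$-rank bounds, that the relative permutation $\mathbf{\Pi}=\mathbf{\Pi}_{\mathbf B}\mathbf{\Pi}_{\mathbf C}^T$ leaves invariant certain coordinate subspaces determined by the supports of the contracted slices, hence fixes every basis index, hence equals $\mathbf I_R$ once the chosen basis of $\textup{range}(\mathbf C)$ is allowed to vary. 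Neither this argument nor any substitute for it appears in your proposal; until it is supplied, the proof is incomplete exactly at the point where the theorem is nontrivial.
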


Let the matrices $\mathbf A$ and $\mathbf B$ have $R$ columns. Let  $\tilde{\mathbf A}$ be any set of columns of $\mathbf A$, let $\tilde{\mathbf B}$ be the corresponding set of columns of $\mathbf B$, and define
\begin{equation*}
H_{\mathbf A\mathbf B}(\delta):=\min\limits_{card(\tilde{\mathbf A})=\delta}\left[r_{\tilde{\mathbf A}}+r_{\tilde{\mathbf B}}-\delta\right]\quad\text{for}\quad
\delta=1,2,\dots,R.
\end{equation*}
We will say that condition $(\textup{H{\scriptsize m}})$
holds for the matrices $\mathbf A$ and $\mathbf B$ if
\begin{equation}
H_{\mathbf A\mathbf B}(\delta)\geq\min(\delta,m)\quad\text{for}\quad
\delta=1,2,\dots,R.\tag{H{\scriptsize m}}
\end{equation}
The following Theorem is the strongest result about uniqueness from \cite{Kruskal1977}.
\begin{theorem}\cite[Theorem 4e, p. 125]{Kruskal1977}\label{theoremKruskalnew2}(see also Corollary \ref{Prop5.4HmHm} below)
Let $\mathcal T=[\mathbf A,\mathbf B,\mathbf C]_R$  and let
$m_{\mathbf B}:=R-r_{\mathbf B}+2$, $m_{\mathbf C}:=R-r_{\mathbf C}+2$.
Assume that
\begin{romannum}
\item $(\textup{H{\scriptsize 1}})$ holds for $\mathbf B$ and $\mathbf C$;
\item $\text{\textup{(H{\scriptsize m}}}_{\mathbf B}$$\text{\textup{)}}$ holds for $\mathbf C$ and $\mathbf A$;
\item $\text{\textup{(H{\scriptsize m}}}_{\mathbf C}$$\text{\textup{)}}$ holds for $\mathbf A$ and $\mathbf B$.
\end{romannum}
Then $r_{\mathcal T}=R$ and the CPD of  $\mathcal T=[\mathbf A,\mathbf B,\mathbf C]_R$ is  unique.
\end{theorem}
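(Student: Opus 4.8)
The plan is to follow Kruskal's original route through his \emph{permutation lemma}, handling the three factor matrices one at a time. Throughout, let $\omega(\mathbf v)$ denote the number of nonzero entries of a row vector $\mathbf v$, and write an arbitrary competing decomposition as $\mathcal T=[\bar{\mathbf A},\bar{\mathbf B},\bar{\mathbf C}]_{\bar R}$ with $\bar R\le R$; showing that $\bar R=R$ and that this decomposition coincides with $[\mathbf A,\mathbf B,\mathbf C]_R$ up to permutation and scaling establishes both minimality ($r_{\mathcal T}=R$) and uniqueness at once. The single external tool I would invoke is the permutation lemma in the form: if $\mathbf C,\bar{\mathbf C}\in\mathbb F^{K\times R}$, $\mathbf C$ has no zero column, and for every $\mathbf x$ the implication $\omega(\mathbf x^T\bar{\mathbf C})\le R-r_{\mathbf C}+1\Rightarrow\omega(\mathbf x^T\mathbf C)\le\omega(\mathbf x^T\bar{\mathbf C})$ holds, then $\bar{\mathbf C}=\mathbf C\mathbf\Pi\mathbf\Lambda_{\mathbf C}$ for a permutation $\mathbf\Pi$ and nonsingular diagonal $\mathbf\Lambda_{\mathbf C}$.

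The heart of the proof is a bridging lemma that converts a condition $\cond{H}{m}$ directly into this weight hypothesis. Fix $\mathbf x$ and contract the common representation $(\mathbf A\odot\mathbf B)(\mathbf C^T\mathbf x)=(\bar{\mathbf A}\odot\bar{\mathbf B})(\bar{\mathbf C}^T\mathbf x)$, reshaping it into an $I\times J$ matrix $\mathbf M$. The right-hand side exhibits $\mathbf M$ as a sum of at most $\omega(\mathbf x^T\bar{\mathbf C})$ rank-$1$ terms, so $r_{\mathbf M}\le\omega(\mathbf x^T\bar{\mathbf C})$. On the left, if $S$ is the support of $\mathbf x^T\mathbf C$ and $w:=\omega(\mathbf x^T\mathbf C)=|S|$, then $\mathbf M=\mathbf A_S\mathbf D\mathbf B_S^T$ with $\mathbf D$ a nonsingular diagonal matrix, so Sylvester's rank inequality gives $r_{\mathbf M}\ge r_{\mathbf A_S}+r_{\mathbf B_S}-w\ge H_{\mathbf A\mathbf B}(w)$. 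Under $\cond{H}{m}$ this is at least $\min(w,m)$, hence $\min(w,m)\le\omega(\mathbf x^T\bar{\mathbf C})$; consequently, as soon as $\omega(\mathbf x^T\bar{\mathbf C})\le m-1$ the case $w\ge m$ is excluded and we obtain $\omega(\mathbf x^T\mathbf C)=w\le\omega(\mathbf x^T\bar{\mathbf C})$.

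Taking $m=m_{\mathbf C}=R-r_{\mathbf C}+2$ makes $m-1=R-r_{\mathbf C}+1$ coincide exactly with the permutation-lemma threshold for $\mathbf C$, so hypothesis (iii) yields $\bar{\mathbf C}=\mathbf C\mathbf\Pi\mathbf\Lambda_{\mathbf C}$. Repeating the argument on the unfolding $(\mathbf C\odot\mathbf A)\mathbf B^T$, so that the rank bound is supplied by $H_{\mathbf C\mathbf A}$ and the threshold is $R-r_{\mathbf B}+1=m_{\mathbf B}-1$, turns hypothesis (ii) into $\bar{\mathbf B}=\mathbf B\mathbf\Pi'\mathbf\Lambda_{\mathbf B}$. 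Hypothesis (i), $\cond{H}{1}$ for $\mathbf B$ and $\mathbf C$, supplies the non-degeneracy needed to make the bridging step non-vacuous and to force the rank count $\bar R=R$, preventing a strictly shorter decomposition. Once two factors are pinned down I would reconcile the two permutations ($\mathbf\Pi=\mathbf\Pi'$) and read off the remaining factor $\mathbf A$ together with the scaling relation $\mathbf\Lambda_{\mathbf A}\mathbf\Lambda_{\mathbf B}\mathbf\Lambda_{\mathbf C}=\mathbf I_R$ from the tensor equation, which is routine bookkeeping.

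The main obstacle I anticipate is the bridging lemma combined with the simultaneous control of $\bar R$: because none of $\mathbf A,\mathbf B,\mathbf C$ has full column rank, one cannot invert a factor to transfer an equality of the form $\bar{\mathbf C}=\mathbf C\mathbf M$, so the comparison of the supports of $\mathbf x^T\mathbf C$ and $\mathbf x^T\bar{\mathbf C}$ must be extracted indirectly from ranks of Khatri--Rao submatrices as above, with the Sylvester lower bound kept tight precisely in the regime $w<m$. Verifying that this weight estimate remains valid for the possibly shorter competing decomposition, so that $\bar R<R$ is genuinely ruled out rather than merely equivalence being proved on $R$ terms, is the delicate point.
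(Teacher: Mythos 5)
Your route is genuinely different from the paper's. The paper never reproves this theorem from scratch: it notes that hypothesis (i) is redundant and that (ii)+(iii) already give the conclusion via Corollary \ref{Prop5.4HmHm}, whose proof runs $\cond{H}{m}\Rightarrow\cond{U}{m}$ (Proposition \ref{Prop:KCUW}(2)), then invokes Part~I's machinery (Proposition \ref{Prop:1.16}) to get uniqueness of the second and third factor matrices, and finally Proposition \ref{proptwomatrixisunique} to merge the two. You instead follow Kruskal's original path, and your bridging lemma is correct and is exactly the right device: writing $w=\omega(\mathbf x^T\mathbf C)$, the identity $\mathbf A\,\textup{Diag}(\mathbf C^T\mathbf x)\,\mathbf B^T=\bar{\mathbf A}\,\textup{Diag}(\bar{\mathbf C}^T\mathbf x)\,\bar{\mathbf B}^T$ plus Sylvester gives $\min(w,m)\leq H_{\mathbf A\mathbf B}(w)\leq\omega(\mathbf x^T\bar{\mathbf C})$, and the threshold $m_{\mathbf C}-1=R-r_{\mathbf C}+1$ matches the permutation-lemma hypothesis exactly. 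This self-contained bridge re-derives, inside the $\cond{H}{m}$ setting, what the paper imports from Part~I, which is a real merit of your approach.

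The genuine gap is the step you call ``routine bookkeeping.'' After the permutation lemma you know only $\bar{\mathbf B}=\mathbf B\mathbf\Pi'\mathbf\Lambda_{\mathbf B}$ and $\bar{\mathbf C}=\mathbf C\mathbf\Pi\mathbf\Lambda_{\mathbf C}$ with \emph{possibly different} permutations, and $\bar{\mathbf A}$ is completely unconstrained. Proving $\mathbf\Pi=\mathbf\Pi'$ and then recovering $\bar{\mathbf A}$ is precisely the content of the paper's Proposition \ref{proptwomatrixisunique} (applied after relabeling modes), whose proof occupies all of Subsection \ref{subsection5.2} with a delicate combinatorial argument on the subspaces $E$ and $E_{i_k}$; it is nontrivial exactly because $r_{\mathbf B},r_{\mathbf C}<R$, so no factor can be cancelled. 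Note that full column rank of $\mathbf B\odot\mathbf C$ (your use of (i)) does not help here: if $\mathbf\Pi\neq\mathbf\Pi'$, the competing decomposition involves the ``off-diagonal'' vectors $\mathbf b_{\sigma(i)}\otimes\mathbf c_{\tau(i)}$ with $\sigma(i)\neq\tau(i)$, about which $\cond{H}{1}$ for $(\mathbf B,\mathbf C)$ says nothing. To close the argument you must either prove such a reconciliation lemma — and check its hypotheses \eqref{eq5.two}, which do follow from (ii)+(iii) via $\cond{H}{m}\Rightarrow\cond{U}{m}\Rightarrow\min(k_{\mathbf A},k_{\mathbf B})\geq m_{\mathbf C}$, $\min(k_{\mathbf C},k_{\mathbf A})\geq m_{\mathbf B}$ (Proposition \ref{Prop:KCUW}(2),(9)) — or reproduce the corresponding part of Kruskal's own proof. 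A secondary inaccuracy: you attribute the exclusion of $\bar R<R$ to hypothesis (i), but (i) only yields that $\mathbf B\odot\mathbf C$ has full column rank, hence $r_{\mathbf A}\leq\bar R$, which does not rule out $\bar R<R$ since $r_{\mathbf A}<R$ is allowed. The correct mechanism is already in your toolkit: pad $\bar{\mathbf C}$ with zero columns to width $R$, observe that the bridging lemma still yields the weight hypothesis, and note that the permutation lemma (in the form you quoted, requiring $\mathbf C$, not $\bar{\mathbf C}$, to have no zero columns) then forces every column of the padded $\bar{\mathbf C}$ to be a nonzero multiple of a column of $\mathbf C$, contradicting the presence of a zero column.
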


For the formulation of other results we  recall the definition of compound matrix.
\begin{definition}\label{defcompound} \cite[Definition 2.1  and Example 2.2]{Part I}
The $k$-th compound matrix of $I\times R$ matrix $\mathbf A$ (denoted by $\mathcal C_k(\mathbf A)$) is the $C^k_I\times C^k_R$ matrix containing the determinants of all $k\times k$ submatrices of $\mathbf A$, arranged with the submatrix index sets in lexicographic order.
\end{definition}

 With a vector $\mathbf d=\left[\begin{matrix}d_{1}&\dots& d_R\end{matrix}\right]^T$
we associate the vector
\begin{equation}
\hatdSmR{m}{R}:=
 \left[\begin{matrix}d_{1}\cdots d_m&d_{1}\cdots d_{m-1}d_{m+1}&\dots& d_{R-m+1}\cdots d_R\end{matrix}\right]^T\in\mathbb F^{C^m_R},
  \label{eqd_big_product}
\end{equation}
whose entries are all products $d_{i_1}\cdots d_{i_m}$ with $1\leq i_1<\dots<i_m\leq R$.
 Let us define conditions $(\textup{K{\scriptsize m}})$, $(\textup{C{\scriptsize m}})$, $(\textup{U{\scriptsize m}})$ and $\text{\textup{(W{\scriptsize m})}}$, which
  depend on matrices $\mathbf A\in\mathbb F^{I\times R}$, $\mathbf B\in\mathbb F^{J\times R}$, $\mathbf C\in\mathbb F^{K\times R}$ and an integer parameter $m$:
\begin{align}
&\left\{
  \begin{array}{rl}
    r_{\mathbf A}+k_{\mathbf B}&\geq R+m,\\
    k_{\mathbf A}&\geq m
  \end{array}
\right.
\qquad\text{ or }\qquad
\left\{
  \begin{array}{rl}
      r_{\mathbf B}+k_{\mathbf A}&\geq R+m,\\
      k_{\mathbf B}&\geq m
  \end{array}
\right.
;
\tag{K{\scriptsize m}}
\\
&\ \quad\mathcal C_{m}(\mathbf A)\odot \mathcal C_{m}(\mathbf B)\quad\ \ \text{ has full column rank}\tag{C{\scriptsize m}};
\\
&\begin{cases}
(\mathcal C_{m}(\mathbf A )\odot\mathcal C_m(\mathbf B))\hatdSmR{m}{R}=\vzero,\\
\mathbf d\in\mathbb F^R
\end{cases}\Rightarrow\quad
\hatdSmR{m}{R}=\vzero;
\tag{U{\scriptsize m}}
\\
& \begin{cases}
(\mathcal C_{m}(\mathbf A)\odot \mathcal C_m(\mathbf B))\hatdSmR{m}{R}=\vzero,\\
\mathbf d\in\textup{range}(\mathbf C^T)
\end{cases} \Rightarrow\quad
\hatdSmR{m}{R}=\vzero.\tag{W{\scriptsize m}}
\end{align}
In the sequel, we will for instance say  that  ``condition (\text{U{\scriptsize m}}) holds for
the matrices $\mathbf X$ and $\mathbf Y$'' if
condition (\text{U{\scriptsize m}}) holds for
the matrices $\mathbf A$ and $\mathbf B$ replaced by the matrices $\mathbf X$ and $\mathbf Y$, respectively.
We will  simply write (\text{U{\scriptsize m}}) (resp.  (\text{K{\scriptsize m}}),(\text{H{\scriptsize m}}),(\text{C{\scriptsize m}}) or (\text{W{\scriptsize m}}))
when no confusion is possible.

It is known that conditions $(\text{K{\scriptsize 2}})$, $(\text{C{\scriptsize 2}})$, (\text{U{\scriptsize 2}}) guarantee uniqueness of the CPD with full column rank  in the third mode (see Proposition \ref{K2C2U2unique} below), and that condition $(\text{K{\scriptsize m}})$ guarantees the uniqueness of the  third factor matrix \cite{GuoMironBrieStegeman}, \cite[Theorem 1.12]{Part I}.

In the following Proposition we gather, for later reference, properties of conditions $(\textup{K{\scriptsize m}})$, $(\textup{C{\scriptsize m}})$, $(\textup{U{\scriptsize m}})$ and $\text{\textup{(W{\scriptsize m})}}$ that were established in \cite[\S 2--\S 3]{Part I}. The proofs follow from  properties of compound matrices \cite[Subsection 2.1]{Part I}.

\begin{proposition}\label{Prop:KCUW}
\begin{itemize}
\item[(1)] If $(\textup{K{\scriptsize m}})$ holds, then $(\textup{C{\scriptsize m}})$  and $(\textup{H{\scriptsize m}})$ hold \cite[Lemmas 3.8, 3.9]{Part I};
\item[(2)] if $(\textup{C{\scriptsize m}})$  or $(\textup{H{\scriptsize m}})$  holds, then $(\textup{U{\scriptsize m}})$ holds \cite[Lemmas 3.1, 3.10]{Part I};
\item[(3)] if $(\textup{U{\scriptsize m}})$ holds, then $(\textup{W{\scriptsize m}})$ holds \cite[Lemma 3.3]{Part I};
\item[(4)] if $(\textup{K{\scriptsize m}})$ holds, then $(\textup{K{\scriptsize k}})$ holds for $k\leq m$ \cite[Lemma 3.4]{Part I};
\item[(5)] if $(\textup{H{\scriptsize m}})$ holds, then $(\textup{H{\scriptsize k}})$ holds for $k\leq m$ \cite[Lemma 3.5]{Part I};
\item[(6)] if $(\textup{C{\scriptsize m}})$ holds, then $(\textup{C{\scriptsize k}})$ holds for $k\leq m$ \cite[Lemma 3.6]{Part I};
\item[(7)] if $(\textup{U{\scriptsize m}})$ holds, then $(\textup{U{\scriptsize k}})$ holds for $k\leq m$  \cite[Lemma 3.7]{Part I};
\item[(8)] if $(\textup{W{\scriptsize m}})$ holds and
$\min(k_{\mathbf A}, k_{\mathbf B})\geq m-1$,
 then $(\textup{W{\scriptsize k}})$ holds for $k\leq m$ \cite[Lemma 3.12 ]{Part I};
\item[(9)] if $(\textup{U{\scriptsize m}})$ holds, then $\min(k_{\mathbf A}, k_{\mathbf B})\geq m$ \cite[Lemma 2.8 ]{Part I}.
\end{itemize}
\end{proposition}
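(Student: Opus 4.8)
The plan is to derive all nine items from the algebra of compound matrices, relying on three facts recalled in Part~I: the Cauchy--Binet multiplicativity $\mathcal C_m(\mathbf X\mathbf Y)=\mathcal C_m(\mathbf X)\mathcal C_m(\mathbf Y)$, the rank identity $r_{\mathcal C_m(\mathbf A)}=C_{r_{\mathbf A}}^m$, and the vanishing criterion that the column of $\mathcal C_m(\mathbf A)$ indexed by an $m$-subset $S\subseteq\{1,\dots,R\}$ is zero if and only if the columns $\{\mathbf a_i:\ i\in S\}$ are linearly dependent. I would also use the elementary remark that $\hatdSmR{m}{R}=\vzero$ holds precisely when $\mathbf d$ has fewer than $m$ nonzero entries, so that conditions \cond{U}{m} and \cond{W}{m} really assert that the only sufficiently sparse $\mathbf d$ sit in the relevant kernel.

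Several items are then immediate. Item~(3) holds because \cond{W}{m} merely restricts the admissible $\mathbf d$ from $\mathbb F^R$ to the subspace $\textup{range}(\mathbf C^T)$, so it is weaker than \cond{U}{m}. Items~(4) and~(5) are arithmetic: the defining inequalities at order $m$ dominate those at order $k\le m$ because $R+m\ge R+k$, $m\ge k$, and $\min(\delta,m)\ge\min(\delta,k)$. The implication \cond{C}{m}$\Rightarrow$\cond{U}{m} inside item~(2) is also immediate, since full column rank means the only kernel vector is $\vzero$, and I apply this to $\mathbf x=\hatdSmR{m}{R}$. For item~(9) I would argue by contraposition: if $k_{\mathbf A}\le m-1$, choose $\mathbf d$ supported on an $m$-subset $S$ whose $\mathbf A$-columns are dependent; then the only product $d_{i_1}\cdots d_{i_m}$ that is nonzero is the one indexed by $S$, while the vanishing criterion kills the column of $\mathcal C_m(\mathbf A)$ indexed by $S$, producing a nonzero $\hatdSmR{m}{R}$ in $\ker(\mathcal C_m(\mathbf A)\odot\mathcal C_m(\mathbf B))$ and contradicting \cond{U}{m}; symmetrically for $\mathbf B$.

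The substantive work lies in items~(1), (6), (7), (8). For \cond{K}{m}$\Rightarrow$\cond{C}{m} I would reduce to the classical order-one Khatri--Rao lemma: if $r_{\mathbf X}+k_{\mathbf Y}\ge n+1$ and $k_{\mathbf X}\ge 1$ then the product $\mathbf X\odot\mathbf Y$ of matrices with $n$ columns has full column rank. Applying it to $\mathbf X=\mathcal C_m(\mathbf A)$ and $\mathbf Y=\mathcal C_m(\mathbf B)$ (so $n=C_R^m$), the hypotheses $r_{\mathbf A}+k_{\mathbf B}\ge R+m$ and $k_{\mathbf A}\ge m$ are converted, via $r_{\mathcal C_m(\mathbf A)}=C_{r_{\mathbf A}}^m$ and a lower bound on $k_{\mathcal C_m(\mathbf B)}$ in terms of $k_{\mathbf B}$, into $C_{r_{\mathbf A}}^m+k_{\mathcal C_m(\mathbf B)}\ge C_R^m+1$ and $k_{\mathcal C_m(\mathbf A)}\ge1$. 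The other half of~(1), \cond{K}{m}$\Rightarrow$\cond{H}{m}, is a direct estimate of $r_{\tilde{\mathbf A}}+r_{\tilde{\mathbf B}}-\delta$ from $r_{\mathbf A}$ and $k_{\mathbf B}$ over each $\delta$-subset. For the downward items~(6) and~(7) I would use a lifting argument: a nontrivial relation $(\mathcal C_k(\mathbf A)\odot\mathcal C_k(\mathbf B))\mathbf x=\vzero$ is promoted to a nontrivial order-$m$ relation by combining the participating $k$-subset columns with suitable $(m-k)$-subset compound columns, contradicting the order-$m$ hypothesis; the required nonvanishing of those auxiliary columns is automatic because \cond{C}{m} and \cond{U}{m} already force $\min(k_{\mathbf A},k_{\mathbf B})\ge m$ (by items~(2) and~(9)). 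In item~(8) condition \cond{W}{m} supplies no such rank floor, which is exactly why $\min(k_{\mathbf A},k_{\mathbf B})\ge m-1$ must be assumed to make the same lift go through.

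The main obstacle is \cond{H}{m}$\Rightarrow$\cond{U}{m} in item~(2), which must convert the purely combinatorial profile $\{H_{\mathbf A\mathbf B}(\delta)\}_{\delta=1}^{R}$ into the algebraic statement that every decomposable kernel vector $\hatdSmR{m}{R}$ arises from a $\mathbf d$ with fewer than $m$ nonzeros. This is precisely the territory of Kruskal's permutation lemma, and the delicate part is the bookkeeping that matches the values $r_{\tilde{\mathbf A}}+r_{\tilde{\mathbf B}}-\delta$ against the support of $\mathbf d$ and the nonvanishing of the corresponding compound columns. A secondary subtlety is isolating, in item~(8), exactly where the floor $\min(k_{\mathbf A},k_{\mathbf B})\ge m-1$ is consumed in the lift.
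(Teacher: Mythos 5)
The paper itself offers no proof of Proposition \ref{Prop:KCUW}: it is an inventory of results established in Part I and cited lemma by lemma, so your reconstruction has to stand entirely on its own. Much of it does: items (3), (4), (5), the \cond{C}{m}$\Rightarrow$\cond{U}{m} half of item (2), item (9), and the \cond{K}{m}$\Rightarrow$\cond{H}{m} half of item (1) are correct as sketched. The genuine failure is your route for \cond{K}{m}$\Rightarrow$\cond{C}{m}. The conversion you need --- a lower bound on $k_{\mathcal C_m(\mathbf B)}$ in terms of $k_{\mathbf B}$ strong enough to feed the order-one Khatri--Rao lemma --- does not exist, because the $k$-rank of a compound matrix is capped by its number of rows $C_J^m$, and under \cond{K}{m} that cap can lie strictly below the threshold $C_R^m-C_{r_{\mathbf A}}^m+1$ your reduction requires. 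Concretely, take $R=5$, $m=2$, $\mathbf A\in\mathbb F^{3\times 5}$ generic (so $r_{\mathbf A}=k_{\mathbf A}=3$) and $\mathbf B\in\mathbb F^{4\times 5}$ generic (so $r_{\mathbf B}=k_{\mathbf B}=4$). Then $r_{\mathbf A}+k_{\mathbf B}=7=R+m$ and $k_{\mathbf A}\geq m$, so \cond{K}{2} holds, and \cond{C}{2} is indeed true by the cited Part I result. Your reduction, however, needs $r_{\mathcal C_2(\mathbf A)}+k_{\mathcal C_2(\mathbf B)}\geq C_5^2+1=11$; since $r_{\mathcal C_2(\mathbf A)}=C_3^2=3$ and $\mathcal C_2(\mathbf B)$ has only $C_4^2=6$ rows, this would force $k_{\mathcal C_2(\mathbf B)}\geq 8>6$, which is impossible, and swapping the roles of $\mathbf A$ and $\mathbf B$ fails the same way ($k_{\mathcal C_2(\mathbf A)}\leq 3<5$). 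So the order-one lemma can never be invoked in this regime even though the conclusion holds; the implication must be proved by analyzing $\textup{ker}(\mathcal C_m(\mathbf A)\odot\mathcal C_m(\mathbf B))$ directly.

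Two further gaps. You leave \cond{H}{m}$\Rightarrow$\cond{U}{m} --- the deepest implication in the proposition --- entirely unproven: calling it ``the territory of Kruskal's permutation lemma'' names the difficulty but supplies no argument, and none of your other machinery yields it. And your lifting argument is only sound where the kernel vectors are structured. For items (7) and (8) the idea can be made rigorous: $(\mathcal C_k(\mathbf A)\odot\mathcal C_k(\mathbf B))\hatdSmR{k}{R}=\vzero$ is equivalent to $r_{\mathbf A\textup{Diag}(\mathbf d)\mathbf B^T}\leq k-1$; if $\mathbf d$ has at least $m$ nonzero entries this contradicts the order-$m$ condition outright, and if it has fewer, the $k$-rank floor (from item (9) in case (7), from the hypothesis $\min(k_{\mathbf A},k_{\mathbf B})\geq m-1$ in case (8)) forces $r_{\mathbf A\textup{Diag}(\mathbf d)\mathbf B^T}$ to equal the number of nonzero entries of $\mathbf d$, again a contradiction --- note that in (8) one cannot instead pad $\mathbf d$ with extra nonzero entries, since the padded vector must stay in $\textup{range}(\mathbf C^T)$, which is exactly why the floor is assumed. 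But item (6) concerns an \emph{arbitrary} $\mathbf x\in\textup{ker}(\mathcal C_k(\mathbf A)\odot\mathcal C_k(\mathbf B))$, not one of the form $\hatdSmR{k}{R}$, and the proposed promotion does not go through: expanding the $m\times m$ minors $\det\mathbf A[P,S\cup T_0]$ and $\det\mathbf B[Q,S\cup T_0]$ by Laplace along an auxiliary $(m-k)$-subset $T_0$ leaves you with sums $\sum_{S\cap T_0=\emptyset}x_S\det\mathbf A[P',S]\det\mathbf B[Q',S]$ restricted to $k$-subsets disjoint from $T_0$; these are not the kernel relations you assumed (the terms with $S\cap T_0\neq\emptyset$ are simply absent), so nothing forces them to vanish. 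That item needs a genuinely different idea.
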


The following schemes illustrate  Proposition \ref{Prop:KCUW}:
\begin{equation}\label{maindiagramintro}
\begin{matrix}
\left\{
\begin{matrix}
k_{\mathbf A}\geq m,\\
k_{\mathbf B}\geq m\
\end{matrix}
\right.\\
\quad\\
\quad
\end{matrix}\quad
\begin{matrix}
&(\text{W{\scriptsize m}})       &\
         &(\text{W{\scriptsize m-1}})   &\          &\dots&\          &(\text{W{\scriptsize 2}}) &\  &(\text{W{\scriptsize 1}})\\
&\Uparrow  &\           &\Uparrow  &\          &\dots&\          &\Uparrow  &\           &\Uparrow\\
\Leftarrow&(\text{U{\scriptsize m}})       &\Rightarrow           &(\text{U{\scriptsize m-1}})   &\Rightarrow          &\dots&\Rightarrow          &(\text{U{\scriptsize 2}})       &\Rightarrow  &(\text{U{\scriptsize 1}})\\
&\Uparrow  &\           &\Uparrow  &\          &\dots&\          &\Uparrow  &\           &\Updownarrow\\
&(\text{C{\scriptsize m}})       &\Rightarrow &(\text{C{\scriptsize m-1}})   &\Rightarrow&\dots&\Rightarrow&(\text{C{\scriptsize 2}})       &\Rightarrow &(\text{C{\scriptsize 1}})\\
&\Uparrow  &\           &\Uparrow  &\          &\dots&\          &\Uparrow  &\           &\Uparrow\\
&(\text{K{\scriptsize m}})       &\Rightarrow &(\text{K{\scriptsize m-1}})   &\Rightarrow&\dots&\Rightarrow&(\text{K{\scriptsize 2}})       &\Rightarrow &(\text{K{\scriptsize 1}})
\end{matrix},
\end{equation}
and
\begin{equation}
\text{if }\min(k_{\mathbf A}, k_{\mathbf B})\geq m-1, \text{  then }\quad
 (\text{W{\scriptsize m}})       \Rightarrow
         (\text{W{\scriptsize m-1}})  \Rightarrow          \dots\ \Rightarrow           (\text{W{\scriptsize 2}}) \Rightarrow(\text{W{\scriptsize 1}}).
         \label{eq1.14}
\end{equation}
Scheme
\eqref{maindiagramintro} also remains valid after replacing conditions
$(\textup{C{\scriptsize m}})$,\dots,$(\textup{C{\scriptsize 1}})$ and equivalence
$(\textup{C{\scriptsize 1}})\Leftrightarrow (\textup{U{\scriptsize 1}})$ by
conditions $(\textup{H{\scriptsize m}})$,\dots,$(\textup{H{\scriptsize 1}})$ and implication
$(\textup{H{\scriptsize 1}})\Rightarrow (\textup{U{\scriptsize 1}})$, respectively. One can easily construct examples where $(\textup{C{\scriptsize m}})$ holds but
$(\textup{H{\scriptsize m}})$ does not hold. We do not know examples where $(\textup{H{\scriptsize m}})$ is more relaxed than
$(\textup{C{\scriptsize m}})$.

Deterministic  results concerning the uniqueness of one particular factor matrix were presented in \cite[\S 4]{Part I}. We first have the following proposition.
\begin{proposition}\label{prmostgeneraldis}\cite[Proposition 4.9]{Part I}
Let $\mathbf A\in \mathbb F^{I\times R}$,  $\mathbf B\in \mathbb F^{J\times R}$,  $\mathbf C\in \mathbb F^{K\times R}$, and let
$\mathcal T=[\mathbf A, \mathbf B, \mathbf C]_R$.
Assume that
\begin{romannum}
\item $k_{\mathbf C}\geq 1$;
\item
$m=R-r_{\mathbf C}+2\leq\min(I, J)$;
\item $\mathbf A\odot\mathbf B$ has full column rank;
\item
the triplet of matrices $(\mathbf A,\mathbf B,\mathbf C)$ satisfies conditions $\text{\textup{(W{\scriptsize m})}},\dots,\text{\textup{(W{\scriptsize 1})}}$.
\end{romannum}
Then $r_{\mathcal T}=R$ and the third factor matrix  of  $\mathcal T$  is unique.
\end{proposition}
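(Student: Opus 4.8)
The plan is to reduce the statement to Kruskal's permutation lemma by translating the algebraic conditions $(\textup{W}m),\dots,(\textup{W}1)$ into statements about the ranks of the third-mode slices of $\mathcal T$. First I would record the rank reformulation of $(\textup{W}\ell)$. For $\mathbf d\in\mathbb F^R$, multiplicativity of compound matrices (Cauchy--Binet) gives $\mathcal C_\ell(\mathbf A\,\text{Diag}(\mathbf d)\,\mathbf B^T)=\mathcal C_\ell(\mathbf A)\,\text{Diag}(\hatdSmR{\ell}{R})\,\mathcal C_\ell(\mathbf B)^T$, whose columnwise vectorization is $(\mathcal C_\ell(\mathbf A)\odot\mathcal C_\ell(\mathbf B))\hatdSmR{\ell}{R}$. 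Since $(\textup{ii})$ guarantees $\ell\le m\le\min(I,J)$, the $\ell\times\ell$ minors exist and all vanish exactly when $(\mathcal C_\ell(\mathbf A)\odot\mathcal C_\ell(\mathbf B))\hatdSmR{\ell}{R}=\vzero$; that is, $\textup{rank}(\mathbf A\,\text{Diag}(\mathbf d)\,\mathbf B^T)\le\ell-1$ iff $(\mathcal C_\ell(\mathbf A)\odot\mathcal C_\ell(\mathbf B))\hatdSmR{\ell}{R}=\vzero$. As $\hatdSmR{\ell}{R}=\vzero$ precisely when $\mathbf d$ has at most $\ell-1$ nonzero entries, assumption $(\textup{iv})$ says that for every $\mathbf d\in\textup{range}(\mathbf C^T)$ and every $\ell\in\{1,\dots,m\}$,
\[\textup{rank}(\mathbf A\,\text{Diag}(\mathbf d)\,\mathbf B^T)\le\ell-1\ \Longrightarrow\ \mathbf d\ \text{has at most}\ \ell-1\ \text{nonzero entries.}\]

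Next I would introduce a competing decomposition $\mathcal T=[\bar{\mathbf A},\bar{\mathbf B},\bar{\mathbf C}]_{R'}$ with $R'\le R$. Contracting $\mathcal T$ with an arbitrary $\mathbf x\in\mathbb F^K$ in the third mode yields the slice identity
\[\mathbf M(\mathbf x):=\mathbf A\,\text{Diag}(\mathbf C^T\mathbf x)\,\mathbf B^T=\bar{\mathbf A}\,\text{Diag}(\bar{\mathbf C}^T\mathbf x)\,\bar{\mathbf B}^T.\]
The right-hand side gives the cheap bound $\textup{rank}\,\mathbf M(\mathbf x)\le\bar\nu(\mathbf x)$, where $\bar\nu(\mathbf x)$ is the number of nonzero entries of $\bar{\mathbf C}^T\mathbf x$. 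Whenever $\bar\nu(\mathbf x)\le m-1$, the reformulation above, applied with $\mathbf d=\mathbf C^T\mathbf x\in\textup{range}(\mathbf C^T)$ at the sharp index $\ell=\bar\nu(\mathbf x)+1\le m$, forces $\mathbf C^T\mathbf x$ to have at most $\bar\nu(\mathbf x)$ nonzero entries. Because $m-1=R-r_{\mathbf C}+1$, this is exactly the hypothesis of Kruskal's permutation lemma with reference matrix $\mathbf C$ (which has $k_{\mathbf C}\ge1$ by $(\textup{i})$): for every $\mathbf x$ for which $\bar{\mathbf C}^T\mathbf x$ has at most $R-r_{\mathbf C}+1$ nonzero entries, $\mathbf C^T\mathbf x$ has at least as many zero entries as $\bar{\mathbf C}^T\mathbf x$.

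Applying the permutation lemma then produces a permutation $\mathbf\Pi$ and a nonsingular diagonal $\mathbf\Lambda_{\mathbf C}$ with $\bar{\mathbf C}=\mathbf C\mathbf\Pi\mathbf\Lambda_{\mathbf C}$, which is the asserted uniqueness of the third factor. The same mechanism delivers minimality: if $R'<R$, padding $\bar{\mathbf C}$ with $R-R'$ zero columns leaves both the slice identity and hence the permutation-lemma hypothesis intact, yet the conclusion $\bar{\mathbf C}=\mathbf C\mathbf\Pi\mathbf\Lambda_{\mathbf C}$ is impossible, since $\mathbf C\mathbf\Pi\mathbf\Lambda_{\mathbf C}$ has no zero column whereas the padded matrix does; hence $R'=R$ and $r_{\mathcal T}=R$. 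Here $(\textup{iii})$, full column rank of $\mathbf A\odot\mathbf B$, is used to identify $\textup{range}(\mathbf C)=\textup{range}(\mathbf T^T)\subseteq\textup{range}(\bar{\mathbf C})$, so that $r_{\bar{\mathbf C}}\ge r_{\mathbf C}$ and the two third factors span comparable spaces into which the lemma can be fed.

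The main obstacle I anticipate is the rank bookkeeping at the boundary: one must verify that the single inequality $\textup{rank}\,\mathbf M(\mathbf x)\le\bar\nu(\mathbf x)$, combined with $(\textup{W}\ell)$ used at the sharpest admissible index, lands precisely on the threshold $R-r_{\mathbf C}+1$ demanded by the permutation lemma --- this matching is exactly what pins down the choice $m=R-r_{\mathbf C}+2$ --- and that the lemma is invoked in the direction in which only the well-behaved factors $\mathbf A,\mathbf B$ (through the compound-matrix conditions) are constrained, since nothing is assumed about $\bar{\mathbf A},\bar{\mathbf B}$. Executing the $R'<R$ reduction and handling possible repeated columns of $\mathbf C$ rigorously inside the permutation lemma is the remaining delicate point.
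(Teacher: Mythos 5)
This paper does not actually prove Proposition \ref{prmostgeneraldis} --- it is imported without proof from Part I (Proposition 4.9) --- and your proposal reconstructs essentially that argument: translate each condition $(\mathrm{W}_{\ell})$, via multiplicativity of compound matrices, into ``$\operatorname{rank}(\mathbf A\,\mathrm{Diag}(\mathbf d)\,\mathbf B^T)\le \ell-1$ forces $\mathbf d\in\operatorname{range}(\mathbf C^T)$ to have at most $\ell-1$ nonzero entries,'' bound the slice ranks $\operatorname{rank}(\mathbf A\,\mathrm{Diag}(\mathbf C^T\mathbf x)\,\mathbf B^T)$ of any alternative PD by the number of nonzero entries of $\bar{\mathbf C}^T\mathbf x$, feed this into Kruskal's permutation lemma, and dispose of $R'<R$ by zero-padding $\bar{\mathbf C}$. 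One remark: the permutation lemma's hypothesis carries the threshold $R-r_{\bar{\mathbf C}}+1$ (rank of the unknown factor), not $R-r_{\mathbf C}+1$ as you quote it, but the range inclusion $\operatorname{range}(\mathbf C)\subseteq\operatorname{range}(\bar{\mathbf C})$ you extract from condition (iii) gives $r_{\bar{\mathbf C}}\ge r_{\mathbf C}$, so the set of vectors $\mathbf x$ you control contains the set the lemma requires, and the argument is sound.
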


Combining Propositions \ref{Prop:KCUW} and \ref{prmostgeneraldis} we obtained the following result.
\begin{proposition}\label{Prop:1.16}\cite[Proposition 4.3, Corollaries 4.4 and 4.5]{Part I}
Let  $\mathbf A$, $\mathbf B$, $\mathbf C$, and $\mathcal T$ be as in Proposition \ref{prmostgeneraldis}. Assume that  $k_{\mathbf C}\geq 1$ and $m=m_{\mathbf C}:=R-r_{\mathbf C}+2$. Then
\begin{equation}\label{manyimplicationsm}
\begin{aligned}
&\eqref{Kruskal}\ \xRightarrow{trivial}
\begin{tikzcd}[column sep=small,row sep=small]
& (\textup{C{\scriptsize m}}) \arrow[Rightarrow]{dr}{\eqref{maindiagramintro}} & \\
 (\textup{K{\scriptsize m}}) \arrow[Rightarrow]{ur}{\eqref{maindiagramintro}}\arrow[Rightarrow]{dr}{\eqref{maindiagramintro}} & & (\textup{U{\scriptsize m}})\\
 & (\textup{H{\scriptsize m}}) \arrow[Rightarrow]{ur}{\eqref{maindiagramintro}} &
\end{tikzcd}\
\xRightarrow{\eqref{maindiagramintro}} \
\begin{cases}
(\textup{C{\scriptsize 1}})\\
\min(k_{\mathbf A},k_{\mathbf B})\geq m-1,\\
(\textup{W{\scriptsize m}})
\end{cases}
\
\\
&\xRightarrow{\eqref{eq1.14}} \
\begin{cases}
(\textup{C{\scriptsize 1}})\\
(\textup{W{\scriptsize 1}}),\dots,(\textup{W{\scriptsize m}})
\end{cases}
\Rightarrow
\begin{cases}
r_{\mathcal T}=R,\\
\text{the third factor matrix of}\ \mathcal T\ \text{ is unique}.
\end{cases}
\end{aligned}
\end{equation}
\end{proposition}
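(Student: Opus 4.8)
The plan is to prove the proposition exactly as it is drawn, namely by establishing each arrow of the chain \eqref{manyimplicationsm} in turn. All of the real analytic content has already been isolated in Part I and recorded in Proposition \ref{Prop:KCUW} and the diagrams \eqref{maindiagramintro}--\eqref{eq1.14}, so the work reduces to verifying the single ``trivial'' arrow by hand, reading the middle arrows off Proposition \ref{Prop:KCUW}, and closing with one application of Proposition \ref{prmostgeneraldis}.

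First I would verify the leftmost implication $\eqref{Kruskal}\Rightarrow(\textup{K{\scriptsize m}})$ for $m=m_{\mathbf C}=R-r_{\mathbf C}+2$. Since $k_{\mathbf C}\le r_{\mathbf C}$, Kruskal's bound $k_{\mathbf A}+k_{\mathbf B}+k_{\mathbf C}\ge 2R+2$ yields $k_{\mathbf A}+k_{\mathbf B}\ge 2R+2-r_{\mathbf C}=R+m$. As $\mathbf A$ and $\mathbf B$ each have $R$ columns we have $k_{\mathbf A},k_{\mathbf B}\le R$, hence $k_{\mathbf A}\ge (R+m)-k_{\mathbf B}\ge m$ and symmetrically $k_{\mathbf B}\ge m$. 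Then $r_{\mathbf A}+k_{\mathbf B}\ge k_{\mathbf A}+k_{\mathbf B}\ge R+m$ together with $k_{\mathbf A}\ge m$ is precisely the first alternative of $(\textup{K{\scriptsize m}})$, so $(\textup{K{\scriptsize m}})$ holds.

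Next I would read the middle arrows straight out of Proposition \ref{Prop:KCUW} and \eqref{maindiagramintro}. Parts (1) and (2) give $(\textup{K{\scriptsize m}})\Rightarrow(\textup{C{\scriptsize m}}),(\textup{H{\scriptsize m}})$ and $(\textup{C{\scriptsize m}})\text{ or }(\textup{H{\scriptsize m}})\Rightarrow(\textup{U{\scriptsize m}})$. From $(\textup{U{\scriptsize m}})$ I then extract the three items of the following box: part (7) combined with the equivalence $(\textup{U{\scriptsize 1}})\Leftrightarrow(\textup{C{\scriptsize 1}})$ in \eqref{maindiagramintro} gives $(\textup{C{\scriptsize 1}})$; part (9) gives $\min(k_{\mathbf A},k_{\mathbf B})\ge m$, in particular $\ge m-1$; and part (3) gives $(\textup{W{\scriptsize m}})$. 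Since $\min(k_{\mathbf A},k_{\mathbf B})\ge m-1$ is exactly the hypothesis of \eqref{eq1.14}, the latter propagates $(\textup{W{\scriptsize m}})$ downward to $(\textup{W{\scriptsize m}}),\dots,(\textup{W{\scriptsize 1}})$, producing the penultimate box.

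Finally I would apply Proposition \ref{prmostgeneraldis} to the penultimate box. Because $\mathcal C_1(\mathbf A)=\mathbf A$ and $\mathcal C_1(\mathbf B)=\mathbf B$, condition $(\textup{C{\scriptsize 1}})$ is literally the statement that $\mathbf A\odot\mathbf B$ has full column rank, i.e.\ hypothesis (iii); $(\textup{W{\scriptsize 1}}),\dots,(\textup{W{\scriptsize m}})$ is hypothesis (iv); and $k_{\mathbf C}\ge 1$ is hypothesis (i). The one hypothesis of Proposition \ref{prmostgeneraldis} that does not appear explicitly in the chain is (ii), $m\le\min(I,J)$, and supplying it is the one step I expect to need genuine attention: the box only advertises the weaker bound $\min(k_{\mathbf A},k_{\mathbf B})\ge m-1$, so I would instead use part (9) of Proposition \ref{Prop:KCUW} in full strength, $\min(k_{\mathbf A},k_{\mathbf B})\ge m$, together with $k_{\mathbf A}\le r_{\mathbf A}\le I$ and $k_{\mathbf B}\le r_{\mathbf B}\le J$, to conclude $m\le\min(I,J)$. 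With (i)--(iv) all in place, Proposition \ref{prmostgeneraldis} yields $r_{\mathcal T}=R$ and uniqueness of the third factor matrix, which is the final box and completes the chain.
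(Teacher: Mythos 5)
Your proof is correct and follows essentially the same route as the paper: the proposition is obtained precisely by combining Proposition \ref{Prop:KCUW} (i.e.\ scheme \eqref{maindiagramintro} and \eqref{eq1.14}) with Proposition \ref{prmostgeneraldis}, which is exactly the combination you carry out. Your extra step of supplying hypothesis (ii) of Proposition \ref{prmostgeneraldis}, $m\leq\min(I,J)$, from part (9) of Proposition \ref{Prop:KCUW} (rather than from the weaker bound $\min(k_{\mathbf A},k_{\mathbf B})\geq m-1$ displayed in the intermediate box) correctly closes the one detail the diagram leaves implicit.
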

Note that for  $r_{\mathbf C}=R$, we have $m=2$ and  (\textup{U{\scriptsize 2}}) is equivalent to (\textup{W{\scriptsize 2}}).
Moreover, in this case (\textup{U{\scriptsize 2}}) is necessary for uniqueness. We obtain the following counterpart of
Proposition \ref{Prop:1.16}.
\begin{proposition}\label{K2C2U2unique}\cite{DeLathauwer2006,JiangSid2004,Stegeman2009}
Let  $\mathbf A$, $\mathbf B$, $\mathbf C$, and $\mathcal T$ be as in Proposition \ref{prmostgeneraldis}. Assume that  $r_{\mathbf C}=R$. Then
\begin{equation}\label{eq:1.11}
\eqref{Kruskal}\ \Rightarrow
\begin{tikzcd}[column sep=tiny,row sep=tiny]
& (\textup{C{\scriptsize 2}}) \arrow[Rightarrow]{dr} & \\
 (\textup{K{\scriptsize 2}}) \arrow[Rightarrow]{ur}\arrow[Rightarrow]{dr} & & (\textup{U{\scriptsize 2}})\\
 & (\textup{H{\scriptsize 2}}) \arrow[Rightarrow]{ur} &
\end{tikzcd}\
\Leftrightarrow\
\begin{cases}
r_{\mathcal T}=R,\\
\text{the CPD of}\ \mathcal T\ \text{ is unique}.
\end{cases}
\end{equation}
\end{proposition}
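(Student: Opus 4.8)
Since $r_{\mathbf C}=R$ we have $k_{\mathbf C}=R$ and $m:=R-r_{\mathbf C}+2=2$, so every condition in the diamond is taken at level $m=2$. The argument splits into two logically independent parts: the chain of implications ending in $(\textup{U{\scriptsize 2}})$, and the equivalence of $(\textup{U{\scriptsize 2}})$ with overall uniqueness. First I would dispatch the implications. Substituting $k_{\mathbf C}=R$ into Kruskal's bound \eqref{Kruskal} gives $k_{\mathbf A}+k_{\mathbf B}\geq R+2$; since $r_{\mathbf A}\geq k_{\mathbf A}$ and $k_{\mathbf B}\leq R$ this yields both $r_{\mathbf A}+k_{\mathbf B}\geq R+2$ and $k_{\mathbf A}\geq 2$, i.e. the first alternative of $(\textup{K{\scriptsize 2}})$. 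The remaining arrows $(\textup{K{\scriptsize 2}})\Rightarrow(\textup{C{\scriptsize 2}}),(\textup{H{\scriptsize 2}})$ and $(\textup{C{\scriptsize 2}}),(\textup{H{\scriptsize 2}})\Rightarrow(\textup{U{\scriptsize 2}})$ are exactly parts (1),(2) of Proposition \ref{Prop:KCUW} via scheme \eqref{maindiagramintro}, so nothing new is needed there.

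For sufficiency $(\textup{U{\scriptsize 2}})\Rightarrow\{r_{\mathcal T}=R,\ \text{unique}\}$ I would verify the hypotheses of Proposition \ref{prmostgeneraldis} with $m=2$: (i) holds since $k_{\mathbf C}=R\geq1$; (ii) holds because $(\textup{U{\scriptsize 2}})$ forces $\min(k_{\mathbf A},k_{\mathbf B})\geq 2$ by Proposition \ref{Prop:KCUW}(9), whence $\min(I,J)\geq 2=m$; (iii) is $(\textup{C{\scriptsize 1}})$, which follows from $(\textup{U{\scriptsize 2}})\Rightarrow(\textup{U{\scriptsize 1}})\Leftrightarrow(\textup{C{\scriptsize 1}})$; and (iv) follows from $(\textup{U{\scriptsize 2}})\Rightarrow(\textup{W{\scriptsize 2}})$ together with $(\textup{U{\scriptsize 1}})\Rightarrow(\textup{W{\scriptsize 1}})$ (Proposition \ref{Prop:KCUW}(3)). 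Proposition \ref{prmostgeneraldis} then gives $r_{\mathcal T}=R$ and uniqueness of the third factor matrix. To upgrade this to uniqueness of the whole CPD I would exploit $r_{\mathbf C}=R$: if $\mathcal T=[\mathbf A,\mathbf B,\mathbf C]_R=[\bar{\mathbf A},\bar{\mathbf B},\bar{\mathbf C}]_R$, then relabelling and rescaling the barred terms so that $\bar{\mathbf C}=\mathbf C$, right-multiplication of $(\mathbf A\odot\mathbf B)\mathbf C^T=(\bar{\mathbf A}\odot\bar{\mathbf B})\mathbf C^T$ by a right inverse of $\mathbf C^T$ gives $\mathbf A\odot\mathbf B=\bar{\mathbf A}\odot\bar{\mathbf B}$, and matching the rank-one columns $\mathbf a_r\otimes\mathbf b_r=\bar{\mathbf a}_r\otimes\bar{\mathbf b}_r$ pins down $\mathbf A,\mathbf B$ up to the trivial indeterminacies.

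The crux is the necessity $\{r_{\mathcal T}=R,\ \text{unique}\}\Rightarrow(\textup{U{\scriptsize 2}})$, which is where \cite{JiangSid2004,DeLathauwer2006,Stegeman2009} enter. The engine is the Cauchy--Binet identity $\mathcal C_2(\mathbf A\,\textup{Diag}(\mathbf d)\,\mathbf B^T)=\mathcal C_2(\mathbf A)\,\textup{Diag}(\hatdSmR{2}{R})\,\mathcal C_2(\mathbf B)^T$, whose vectorization shows that $(\mathcal C_2(\mathbf A)\odot\mathcal C_2(\mathbf B))\hatdSmR{2}{R}=\vzero$ holds if and only if the matrix $\mathbf A\,\textup{Diag}(\mathbf d)\,\mathbf B^T=\sum_r d_r\mathbf a_r\mathbf b_r^T$ has rank at most $1$; thus $(\textup{U{\scriptsize 2}})$ says precisely that the only rank-$\leq 1$ matrices in the linear span of $\mathbf a_1\mathbf b_1^T,\dots,\mathbf a_R\mathbf b_R^T$ are the scalar multiples of the individual $\mathbf a_r\mathbf b_r^T$. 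I would argue the contrapositive: if $(\textup{U{\scriptsize 2}})$ fails there is $\mathbf d$ with at least two nonzero entries and $\mathbf A\,\textup{Diag}(\mathbf d)\,\mathbf B^T$ of rank $\leq 1$. If this matrix is $\vzero$, picking $r_0$ with $d_{r_0}\neq0$ and substituting the dependence $\mathbf a_{r_0}\otimes\mathbf b_{r_0}=-d_{r_0}^{-1}\sum_{r\neq r_0}d_r\,\mathbf a_r\otimes\mathbf b_r$ into $\mathcal T$ rewrites it with $R-1$ rank-one terms, contradicting $r_{\mathcal T}=R$. Otherwise it equals a genuine rank-one $\mathbf a'\mathbf b'^T$, and the same substitution produces
\begin{equation*}
\mathcal T=\tfrac{1}{d_{r_0}}\,\mathbf a'\circ\mathbf b'\circ\mathbf c_{r_0}
+\sum_{r\neq r_0}\mathbf a_r\circ\mathbf b_r\circ\Bigl(\mathbf c_r-\tfrac{d_r}{d_{r_0}}\mathbf c_{r_0}\Bigr),
\end{equation*}
which has $R$ nonzero rank-one terms because $k_{\mathbf C}=R$ keeps each factor $\mathbf c_r-\tfrac{d_r}{d_{r_0}}\mathbf c_{r_0}$ nonzero.

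The main obstacle is to certify that this second decomposition is \emph{essentially} different from $[\mathbf A,\mathbf B,\mathbf C]_R$, not merely a retagging of it. Here I would again use $k_{\mathbf C}=R$: choosing an index $r_1\neq r_0$ with $d_{r_1}\neq0$, the column $\mathbf c_{r_1}-\tfrac{d_{r_1}}{d_{r_0}}\mathbf c_{r_0}$ is a combination of two distinct columns of $\mathbf C$ with both coefficients nonzero, so any proportionality to a single column $\mathbf c_s$ would force a linear dependence among at most three columns of $\mathbf C$, which $k_{\mathbf C}=R$ (hence pairwise and, for $R\geq3$, triplewise independence) forbids. Therefore no column-scaling-and-permutation relates the new third factor to $\mathbf C$, so the third factor matrix is not unique, contradicting the assumed uniqueness. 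This establishes $(\textup{U{\scriptsize 2}})$ and closes the equivalence.
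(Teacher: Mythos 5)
Your proposal is correct, but it cannot be measured against ``the paper's own proof'' for a simple reason: the paper never proves Proposition \ref{K2C2U2unique}. The statement is imported from the literature \cite{DeLathauwer2006,JiangSid2004,Stegeman2009} (the necessity of $(\textup{U{\scriptsize 2}})$ is the condition recalled in \eqref{nec3}, due to \cite[Theorem 2.3]{LievenLL1}), and the paper only remarks afterwards that the proposition is a special case of Proposition \ref{ProFullUniq1onematrixWm}, Corollaries \ref{ProFullUniq1onematrix}--\ref{ProFullUniq1onematrixcor1}, and Theorem \ref{theoremKruskal}. Your argument is a sound, self-contained substitute. The implication chain is handled as the paper would handle it: substituting $k_{\mathbf C}=R$ into \eqref{Kruskal} gives the first alternative of $(\textup{K{\scriptsize 2}})$, and the remaining arrows are Proposition \ref{Prop:KCUW} (1)--(2). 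For sufficiency, your verification of the four hypotheses of Proposition \ref{prmostgeneraldis} is accurate (including $\min(I,J)\geq 2$ via Proposition \ref{Prop:KCUW} (9) and $(\textup{U{\scriptsize 1}})\Leftrightarrow(\textup{C{\scriptsize 1}})$), and your upgrade from uniqueness of $\mathbf C$ to overall uniqueness --- cancelling $\mathbf C^T$ by a right inverse and matching the columns of $\mathbf A\odot\mathbf B=\bar{\mathbf A}\odot\bar{\mathbf B}$ --- is legitimate precisely because $r_{\mathbf C}=R$; the paper's general mechanism (Proposition \ref{proponematrixisunique}, inside Proposition \ref{ProFullUniq1onematrixWm}) is built to survive $k_{\mathbf C}<R$, so your route is more elementary while the paper's is more general. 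For necessity, you essentially reconstruct the cited result: the Cauchy--Binet observation that $(\mathcal C_2(\mathbf A)\odot\mathcal C_2(\mathbf B))\hatdSmR{2}{R}=\vzero$ holds iff $\mathbf A\,\textup{Diag}(\mathbf d)\,\mathbf B^T$ has rank at most one is exactly the device the paper itself deploys later (Lemma \ref{BCformula_and_diag} in \S\ref{Appendix Extra}), and your two cases are airtight: rank zero yields an $(R-1)$-term PD contradicting $r_{\mathcal T}=R$, while rank one yields an $R$-term PD whose third factor contains the column $\mathbf c_{r_1}-\frac{d_{r_1}}{d_{r_0}}\mathbf c_{r_0}$, which $k_{\mathbf C}=R$ prevents from being proportional to any column of $\mathbf C$ (pairwise independence if the candidate index lies in $\{r_0,r_1\}$, triplewise independence otherwise), so essential uniqueness of the third factor, hence of the CPD, is violated. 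The net effect is that you supply a complete proof of a statement the paper treats as external input.
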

\subsubsection{Generic conditions}
Let the matrices $\mathbf A\in\mathbb F^{I\times R}$, $\mathbf B\in\mathbb F^{J\times R}$ and $\mathbf C\in\mathbb F^{K\times R}$ be
randomly sampled from a continuous distribution.
It can be easily checked that the equations
$$
k_{\mathbf A}=r_{\mathbf A}=\min(I,R),\quad k_{\mathbf B}=r_{\mathbf B}=\min(J,R),\quad k_{\mathbf C}=r_{\mathbf C}=\min(K,R)
$$
hold generically.
Thus, by \eqref{Kruskal}, the CPD  of an $I\times J\times K$ tensor of rank $R$ is generically unique if
\begin{equation}\label{kruskalgeneric}
\min(I,R)+\min(J,R)+\min(K,R)\geq 2R+2.
\end{equation}
The generic uniqueness of one factor matrix has not yet been studied as such. It can be easily seen that in \eqref{manyimplicationsm} the  generic version of (\textup{K{\scriptsize m}}) for $m=R-K+2$ is also given by \eqref{kruskalgeneric}.

Let us additionally assume that $K\geq R$. Under this assumption, \eqref{kruskalgeneric}
reduces to
\begin{equation*}
\min(I,R)+\min(J,R)\geq R+2.
\end{equation*}
The generic version of condition $(\textup{C{\scriptsize 2}})$ was given in \cite{DeLathauwer2006, Psycho2006}.
It was indicated that the $C^2_IC^2_J\times C^2_R$ matrix $\mathbf U=\mathcal C_{2}(\mathbf A)\odot \mathcal C_2(\mathbf B)$ generically has full column rank
whenever the number of columns of $\mathbf U$ does not exceed the number of rows. By Proposition \ref{K2C2U2unique} the CPD  of an $I\times J\times K$ tensor of rank $R$ is then generically unique if
\begin{equation} \label{C2generic}
K\geq R\qquad\textup{ and }\  I(I-1)J(J-1)/4=C^2_IC^2_J\geq C^2_R=R(R-1)/2.
\end{equation}
The four following results have been obtained in algebraic geometry.
\begin{theorem}\label{th:gen0}\cite[Corollary 3.7]{Strassen1983}
 Let $3\leq I\leq J\leq K$, $K-1\leq (I-1)(J-1)$, and let $K$ be odd. Then the CPD
of an $I\times J\times K$ tensor of rank $R$ is generically unique if $R\leq IJK/(I+J+K-2)-K$.
\end{theorem}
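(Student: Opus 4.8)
The plan is to recast the statement geometrically and prove it as a \emph{generic identifiability} result (Definition~\ref{def1.7}). Let $X=\mathrm{Seg}(\mathbb P^{I-1}\times\mathbb P^{J-1}\times\mathbb P^{K-1})\subset\mathbb P^{IJK-1}$ be the projective variety of rank-$1$ tensors; then $\dim X=I+J+K-3$, a generic rank-$R$ tensor is a generic point of the $R$-th secant variety $\sigma_R(X)$, and uniqueness of its CPD is exactly the statement that the generic fibre of the addition map $X^{\times R}\to\sigma_R(X)$ is a single unordered tuple, modulo the scalings of Definition~\ref{Def:1.5}. The expected dimension of $\sigma_s(X)$ is $\min\{IJK-1,\ s(I+J+K-2)-1\}$. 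The hypothesis $R\le IJK/(I+J+K-2)-K$ rewrites as $(R+K)(I+J+K-2)\le IJK$, i.e. even at level $s=R+K$ the expected dimension has not yet reached the ambient space, so the whole argument stays strictly in the subgeneric regime and the extra summand $K$ is the slack that the third mode will eventually consume.

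The mechanism I would exploit is the contraction of $\mathcal T$ against a covector $\mathbf x\in\mathbb F^{K}$ in the third mode, which produces the $I\times J$ matrix $T(\mathbf x)=\sum_k x_k T_k=\mathbf A\,\mathrm{Diag}(\mathbf c_1^T\mathbf x,\dots,\mathbf c_R^T\mathbf x)\,\mathbf B^T$, where $T_1,\dots,T_K$ are the frontal slices. The rank of $T(\mathbf x)$ drops precisely on the arrangement of the $R$ hyperplanes $H_r=\{\mathbf x:\mathbf c_r^T\mathbf x=0\}\subset\mathbb P^{K-1}$, so recovering the third factor amounts to reading off this arrangement from the determinantal locus $\{\mathbf x:\mathrm{rank}\,T(\mathbf x)<\rho\}$ for the appropriate $\rho$, after which the columns of $\mathbf A$ and $\mathbf B$ are read from the associated kernels and cokernels. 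This is the reason the odd parity of $K$, equivalently the even dimension of $\mathbb P^{K-1}$, is bound to intervene: the Euler-characteristic/degree count certifying that this determinantal scheme is reduced and splits into exactly the expected number of pieces, with no spurious incidences, is the one that forces $K$ to be odd.

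For uniqueness itself I would argue by contradiction. Suppose a generic $\mathcal T$ of rank $R$ has two essentially distinct decompositions, given by length-$R$ subsets $Z,W\subset X$ with $\mathcal T\in\langle Z\rangle\cap\langle W\rangle$. First, using Terracini's lemma together with the shape hypotheses $3\le I\le J\le K$ and $K-1\le(I-1)(J-1)$, I would check that $\sigma_R(X)$ and the auxiliary $\sigma_{R+K}(X)$ are non-defective, so the addition map is at least generically finite. The decisive step is then to rule out a second point of the fibre: combining the slice description above with the subgenericity $(R+K)(I+J+K-2)\le IJK$, a genuine ambiguity forces the $R$ points of $W$, together with a $K$-dimensional family of directions supplied by the third factor, to impose fewer than the expected number of independent conditions, yielding a configuration of effective size $R+K$ that is more special than the dimension bound permits below the generic threshold; the parity of $K$ is what guarantees that the relevant locus is cut out transversally, so that the count is a sharp equality rather than an inequality leaving room for extra solutions.

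The hard part is exactly this last step. As in every generic-identifiability result, Terracini's lemma and a dimension count deliver \emph{finiteness} of the generic fibre almost mechanically, but they never by themselves deliver \emph{injectivity}; the passage from finite-to-one to one-to-one is where the whole difficulty sits, and it is precisely here that the slack $-K$ in the rank bound and the oddness of $K$ must be used. Making rigorous the bookkeeping that converts a hypothetical second decomposition into a forbidden configuration of size $R+K$, and pinpointing where the parity of $K$ is genuinely indispensable rather than merely convenient, is the delicate core of the argument and the part I would expect to have to develop in full detail.
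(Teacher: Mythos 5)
You should first be aware that the paper contains no proof of this statement: Theorem \ref{th:gen0} is imported verbatim from the literature (Corollary 3.7 of Strassen's 1983 paper, exactly as the citation indicates) and is used by the authors only as a point of comparison for their own generic bounds in \S\ref{Section6}. So there is no in-paper argument to measure your proposal against; it has to stand on its own as a reconstruction of Strassen's algebraic-geometric result.

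Judged on its own, the proposal has a genuine gap, and you name it yourself: everything you describe as deliverable --- Terracini's lemma, the dimension count, generic \emph{finiteness} of the fibre of the addition map $X^{\times R}\to\sigma_R(X)$ --- is the routine part, while the passage from finite-to-one to one-to-one, which \emph{is} the content of the theorem, is deferred as ``the part I would expect to have to develop in full detail.'' A proof whose decisive step is an acknowledged placeholder is not a proof. Moreover, the two pillars supporting that missing step are themselves unestablished. First, non-defectivity of $\sigma_R(X)$ and of the auxiliary $\sigma_{R+K}(X)$ for a triple Segre product does not follow ``almost mechanically'' from Terracini's lemma; computing dimensions of secant varieties of Segre varieties is a hard problem in its own right (which is precisely why results such as Theorems \ref{th:gen1}--\ref{cubic} required separate papers), and in your range it must be proved, not assumed. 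Second, your account of why $K$ must be odd --- an ``Euler-characteristic/degree count'' certifying that an unspecified determinantal scheme is reduced --- is speculation: no such scheme is defined, no degree is computed, and nothing in your outline would visibly break if $K$ were even, which shows the outline is not yet tracking the actual mechanism that makes the parity hypothesis necessary. Until the ``forbidden configuration of effective size $R+K$'' is given a precise definition and a contradiction is genuinely derived from $(R+K)(I+J+K-2)\le IJK$ (your rewriting of the rank bound, which is correct), what you have is a plausible research plan rather than an argument.
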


\begin{theorem}\label{th:gen1} \cite[Theorem 1.1]{ChiantiniandOttaviani}
 Let $I\leq J\leq K$. Let $\alpha$, $\beta$ be maximal such that $2^\alpha\leq I$  and $2^\beta\leq J$. Then the CPD
of an $I\times J\times K$ tensor of rank $R$ is generically unique if $R\leq 2^{\alpha+\beta-2}$.
\end{theorem}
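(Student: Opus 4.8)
The plan is to pass to the language of projective algebraic geometry, where this statement becomes a non-defectivity/identifiability statement for a Segre variety, and then to prove it by induction on $\alpha+\beta$ via a dimension-doubling argument. Write $X=\mathbb{P}^{I-1}\times\mathbb{P}^{J-1}\times\mathbb{P}^{K-1}\hookrightarrow\mathbb{P}^{IJK-1}$ for the Segre variety, whose points are exactly the (projectivized) rank-$1$ tensors $\mathbf a\circ\mathbf b\circ\mathbf c$. A generic rank-$R$ tensor is then a generic point of the $R$-th secant variety $\sigma_R(X)$, and, by Definition \ref{def1.7}, generic uniqueness of the CPD is equivalent to the statement that through a generic point of $\sigma_R(X)$ there passes a unique $(R-1)$-plane that is $R$-secant to $X$; in the terminology of Chiantini--Ciliberto this is precisely the assertion that $X$ is not $(R-1)$-weakly defective. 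By Terracini's lemma the affine tangent space to $\sigma_R(X)$ at a generic point $p_1+\dots+p_R$ equals $\widehat{T}_{p_1}X+\dots+\widehat{T}_{p_R}X$, so the whole question is controlled by the geometry of tangent spaces of $X$ at $R$ general points.

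The first step is to isolate the identifiability criterion I will verify: if $\sigma_R(X)$ has the expected dimension $R(I+J+K-2)-1$ and the generic hyperplane $H$ tangent to $X$ at $R$ general points $p_1,\dots,p_R$ is tangent to $X$ \emph{only} at those points (its tangential contact locus is the finite set $\{p_1,\dots,p_R\}$), then $X$ is $R$-identifiable. Checking tangency of $H$ at a point $(\mathbf x,\mathbf y,\mathbf z)\in X$ splits, by the product structure, into three separate linear conditions on $\mathbf x$, $\mathbf y$, $\mathbf z$; consequently the contact locus is cut out by equations that are multilinear in the three sets of homogeneous coordinates, and deciding whether it is zero-dimensional is ultimately a rank computation for matrices assembled from the factors $\mathbf A,\mathbf B,\mathbf C$ --- the same flavour of computation that underlies the compound-matrix conditions $(\textup{C{\scriptsize m}})$ and $(\textup{U{\scriptsize m}})$ recorded in Proposition \ref{Prop:KCUW}.

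The heart of the proof, and the step I expect to fight hardest for, is a doubling lemma: if $\mathbb{P}^{a-1}\times\mathbb{P}^{b-1}\times\mathbb{P}^{K-1}$ is $R$-identifiable, then $\mathbb{P}^{2a-1}\times\mathbb{P}^{b-1}\times\mathbb{P}^{K-1}$ is $2R$-identifiable (and symmetrically for the middle factor). I would prove this by specialization: degenerate the $2R$ general points of the larger Segre so that they break into two groups of $R$ points, each group supported on a copy of the smaller Segre $\mathbb{P}^{a-1}\times\mathbb{P}^{b-1}\times\mathbb{P}^{K-1}$ embedded linearly in the big one; apply the inductive hypothesis to bound the contact locus on each copy, and then use upper semicontinuity to conclude that the contact locus of the general (non-degenerate) configuration is still finite. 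Starting from the base case $a=b=2$, $R=1$ (rank-$1$ tensors are trivially identifiable), each doubling of the first or the second factor multiplies the admissible rank by $2$, so after $\alpha-1$ doublings in the first mode and $\beta-1$ in the second we reach identifiability of $\mathbb{P}^{2^\alpha-1}\times\mathbb{P}^{2^\beta-1}\times\mathbb{P}^{K-1}$ for $R\leq 2^{(\alpha-1)+(\beta-1)}=2^{\alpha+\beta-2}$. Finally, since $\mathbb{P}^{2^\alpha-1}\times\mathbb{P}^{2^\beta-1}$ sits inside $\mathbb{P}^{I-1}\times\mathbb{P}^{J-1}$ as a linear section for $2^\alpha\leq I$ and $2^\beta\leq J$, and non-weak-defectivity is inherited by passing to larger factors, the bound $R\leq 2^{\alpha+\beta-2}$ carries over to arbitrary $I,J$ with the prescribed $\alpha,\beta$.

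The genuine obstacle is the semicontinuity step inside the doubling lemma: under the degeneration the tangential contact locus can a priori jump up in dimension, so I must choose the two linear copies of the smaller Segre and the split of the $2R$ points between them carefully enough that the flat limit of the contact scheme stays finite, which in turn forces a transversality analysis of how the two copies meet and a dimension count guaranteeing that $\sigma_{2R}$ of the doubled variety is non-defective at each stage. One also has to ensure the third factor remains ``large enough'' for the degeneration to be available throughout the induction; since we only ever double the first two modes while $R$ never exceeds $IJ/4\leq JK/4$, the hypothesis $K\geq J$ supplies the needed room and no extra condition on $K$ is required.
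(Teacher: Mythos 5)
You should first note that the paper offers no proof of this statement at all: Theorem \ref{th:gen1} is quoted verbatim from \cite[Theorem 1.1]{ChiantiniandOttaviani} as background for the generic-uniqueness discussion, so there is no ``paper's proof'' to compare against; what you have written is an attempt to reconstruct the Chiantini--Ottaviani argument itself. Your reconstruction does identify the correct framework (secant varieties of the Segre, Terracini's lemma, the Chiantini--Ciliberto implication that non-weak-defectivity implies identifiability, and an induction that doubles one factor and the rank simultaneously), so the plan points at the right proof. But as a proof it has a genuine gap, and it sits exactly where you say you ``expect to fight hardest''.

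The doubling lemma is not proved; it is only described, and the semicontinuity argument you sketch does not close it. Upper semicontinuity does run the right way (if the contact locus of the special configuration is finite, so is that of the general one), but your inductive hypothesis only controls the contact loci of the two small Segres $\mathbb P^{a-1}\times\mathbb P^{b-1}\times\mathbb P^{K-1}$ inside their own spans. The contact locus of the \emph{ambient} Segre $\mathbb P^{2a-1}\times\mathbb P^{b-1}\times\mathbb P^{K-1}$ at the degenerate configuration can a priori contain points $(\mathbf x,\mathbf y,\mathbf z)$ whose first coordinate $\mathbf x=\mathbf x_1\oplus\mathbf x_2$ has both components nonzero, i.e.\ points lying on neither copy, and nothing in your induction says anything about those. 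Ruling such mixed points out --- by decomposing the tangency condition of the hyperplane along the splitting $\mathbb F^{2a}=\mathbb F^{a}\oplus\mathbb F^{a}$ and showing that tangency at a mixed point would force extra tangencies on the two copies, contradicting the inductive finiteness --- is precisely the technical core of the Chiantini--Ottaviani proof, and it is entirely absent here. Two further steps are asserted without argument: (i) the claim that non-weak-defectivity ``is inherited by passing to larger factors'', which is itself a lemma requiring a separate specialization argument (the general points of the larger Segre do not lie on the sub-Segre, so this is not a formal consequence of anything you have set up); and (ii) the requirement that the relevant secant varieties be non-defective at every stage of the induction, which your identifiability criterion needs and which must be propagated alongside the contact-locus statement. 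Until the mixed-point analysis in the doubling step and the inheritance lemma are actually carried out, what you have is a correct road map of the cited proof, not a proof.
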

\begin{theorem}\label{th:fcr} \cite[Proposition 5.2]{ChiantiniandOttaviani},\cite[Theorem 2.7]{Strassen1983}
Let $R\leq (I-1)(J-1)\leq K$. Then the CPD of an $I\times J\times K$  tensor of rank $R$ is generically unique.
\end{theorem}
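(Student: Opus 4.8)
The plan is to reduce the statement to condition $(\textup{U{\scriptsize 2}})$ via Proposition \ref{K2C2U2unique}, and then to a transparent geometric statement about a linear section of a Segre variety. First I would use the hypotheses to install the full-rank structure needed by Proposition \ref{K2C2U2unique}. Since $K\geq(I-1)(J-1)\geq R$, a generic $\mathbf C$ has $r_{\mathbf C}=R$; and since $IJ=(I-1)(J-1)+I+J-1>(I-1)(J-1)\geq R$, the Khatri--Rao product $\mathbf A\odot\mathbf B\in\mathbb F^{IJ\times R}$ generically has full column rank $R$. (The condition $r_{\mathbf C}=R$ is moreover not a restriction on competitors: if $\mathcal T=[\bar{\mathbf A},\bar{\mathbf B},\bar{\mathbf C}]_R$, then $\textup{range}(\mathbf T^{(1)})=\textup{range}(\mathbf A\odot\mathbf B)$ is $R$-dimensional, forcing $\bar{\mathbf C}$ to have full column rank too.) Hence Proposition \ref{K2C2U2unique} applies and generic uniqueness is \emph{equivalent} to the generic validity of $(\textup{U{\scriptsize 2}})$ in the range $R\leq(I-1)(J-1)$.

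Next I would unwind $(\textup{U{\scriptsize 2}})$. Writing $\mathbf M(\mathbf d):=\mathbf A\,\textup{Diag}(\mathbf d)\,\mathbf B^T$, the multiplicativity of compound matrices gives $\mathcal C_2(\mathbf M(\mathbf d))=\mathcal C_2(\mathbf A)\,\mathcal C_2(\textup{Diag}(\mathbf d))\,\mathcal C_2(\mathbf B^T)$, and since $\mathcal C_2(\textup{Diag}(\mathbf d))=\textup{Diag}(\hatdSmR{2}{R})$ one checks that the vector $(\mathcal C_2(\mathbf A)\odot\mathcal C_2(\mathbf B))\hatdSmR{2}{R}$ is precisely the array of all $2\times2$ minors of $\mathbf M(\mathbf d)$. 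Thus $(\mathcal C_2(\mathbf A)\odot\mathcal C_2(\mathbf B))\hatdSmR{2}{R}=\vzero$ means $r_{\mathbf M(\mathbf d)}\leq1$, while $\hatdSmR{2}{R}=\vzero$ means $\mathbf d$ has at most one nonzero entry, i.e. $\mathbf M(\mathbf d)$ is a scalar multiple of one generator $\mathbf a_r\mathbf b_r^T$. Because $\mathbf A\odot\mathbf B$ has full column rank, $\mathbf d\mapsto\mathbf M(\mathbf d)$ is injective, so $(\textup{U{\scriptsize 2}})$ says exactly that the only rank-$\leq1$ matrices in the $R$-dimensional space $\mathcal S:=\textup{span}\{\mathbf a_1\mathbf b_1^T,\dots,\mathbf a_R\mathbf b_R^T\}$ are the scalar multiples of the $\mathbf a_r\mathbf b_r^T$ themselves.

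Geometrically this asserts that the linear subspace $\mathbb{P}(\mathcal S)\cong\mathbb{P}^{R-1}$ meets the Segre variety $\Sigma=\mathbb{P}^{I-1}\times\mathbb{P}^{J-1}\subset\mathbb{P}^{IJ-1}$ of rank-$1$ matrices in exactly the $R$ prescribed points $[\mathbf a_r\mathbf b_r^T]$. This is where the bound on $R$ enters, through a dimension count: $\dim\Sigma=I+J-2$, so the expected dimension of $\mathbb{P}(\mathcal S)\cap\Sigma$ is $(R-1)+(I+J-2)-(IJ-1)=R-(I-1)(J-1)-1$, which is negative precisely when $R\leq(I-1)(J-1)$. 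Consequently, for a generic choice of the $R$ generators the intersection is zero-dimensional, i.e. finite.

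The hard part, and the place where genuine algebraic geometry is unavoidable, is upgrading ``finite'' to ``equal to the $R$ generators.'' This cannot be obtained from $(\textup{C{\scriptsize 2}})$: the necessary counting condition $C^2_R\leq C^2_IC^2_J$ for $(\textup{C{\scriptsize 2}})$ already fails below the bound $R\leq(I-1)(J-1)$ (for instance $I=J=5$, $R=16$, where $C^2_R=120>100=C^2_IC^2_J$), so the linearization through second compound matrices is strictly too weak here and one must work with $\Sigma$ directly. The remaining step is to show that the contact locus of $\mathbb{P}(\mathcal S)$ with $\Sigma$ reduces to the generators; this is a non-weak-defectivity property of the secant variety $\sigma_R(\Sigma)$, provable either by Terracini's lemma combined with the non-weak-$R$-defectivity of $\Sigma$ (the Chiantini--Ottaviani route), or by Strassen's commutation/eigenvalue argument, which exploits that $K\geq(I-1)(J-1)$ supplies enough independent slices. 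I expect this non-weak-defectivity verification to be the crux of the proof.
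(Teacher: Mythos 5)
First, a point of reference: the paper itself contains \emph{no} proof of Theorem \ref{th:fcr} --- it is imported verbatim from \cite[Proposition 5.2]{ChiantiniandOttaviani} and \cite[Theorem 2.7]{Strassen1983}, and the closest the text comes to an internal justification is Remark \ref{problastremark}, which describes the theorem precisely as ``the more relaxed generic condition derived from \cond{U}{2}.'' Your reduction is exactly that derivation, carried out correctly: generically $r_{\mathbf C}=\min(K,R)=R$ and $\mathbf A\odot\mathbf B$ has full column rank (since $R\leq(I-1)(J-1)<IJ$), so Proposition \ref{K2C2U2unique} makes generic uniqueness equivalent to the generic validity of \cond{U}{2}; your identification of $(\mathcal C_2(\mathbf A)\odot\mathcal C_2(\mathbf B))\hatdSmR{2}{R}$ with the $2\times 2$ minors of $\mathbf A\,\textup{Diag}(\mathbf d)\,\mathbf B^T$ is the same device the paper uses in \S\ref{Appendix Extra}; and your observation that \cond{C}{2} cannot reach the stated bound (e.g.\ $I=J=5$, $R=16$, where $C^2_R=120>100=C^2_IC^2_J$) is correct and explains why the theorem genuinely requires algebraic geometry rather than the compound-matrix linearization.

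The gap is that your proposal stops exactly where the mathematical content of the theorem begins. The claim that, for generic $\mathbf a_r,\mathbf b_r$ with $R\leq(I-1)(J-1)$, the span $\mathcal S$ of $\mathbf a_1\mathbf b_1^T,\dots,\mathbf a_R\mathbf b_R^T$ meets the rank-$\leq 1$ locus only in scalar multiples of the generators is not proven; it is handed back to the very references the theorem cites. That is not circular, but it means you have produced a (correct) reduction plus a citation, not a proof. There is also one overstatement inside the sketch: the dimension count $(R-1)+(I+J-2)-(IJ-1)<0$ shows that a \emph{generic} $\mathbb P^{R-1}$ misses the Segre variety $\Sigma$, but $\mathbb P(\mathcal S)$ is not generic --- it is constrained to pass through the $R$ chosen points of $\Sigma$ --- so even the finiteness of $\mathbb P(\mathcal S)\cap\Sigma$ does not follow from the count alone; finiteness and the identification of the intersection with the generators both require the non-weak-defectivity machinery (Terracini/contact-locus, or Strassen's commutation argument). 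Since the paper likewise omits this argument, your attempt is on the same footing as the text: the reduction to generic \cond{U}{2} is right and matches the paper's own reading of the theorem, but the crux remains external.
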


\begin{theorem}\cite[Theorem 1.2]{ChiantiniandOttaviani}\label{cubic}
The CPD of an $I\times I\times I$  tensor of rank $R$ is generically unique if $R\leq k(I)$, where
$k(I)$ is given in Table \ref{tab1}.
\begin{table}[htbp]
\caption{Upper bound $k(I)$ on $R$ under which generic uniqueness of the CPD of a $I\times I\times I$ tensor is guaranteed by Theorem \ref{cubic}.}
\begin{center}\footnotesize
\begin{tabular}{|c|ccccccccc|}
  \hline
  $I$   & 2& 3& 4& 5&  6&  7&  8&  9& 10\\
  \hline
  $k(I)$& 2& 3& 5& 9& 13& 18& 22& 27& 32\\
  \hline
 \end{tabular}
\end{center}
\label{tab1}
\end{table}
\end{theorem}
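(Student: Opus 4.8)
The statement is a result from algebraic geometry, so the plan is to translate generic uniqueness of the CPD into a question about secant varieties and then settle it by an infinitesimal (tangent-space) computation. First I would identify the set of rank-one tensors in $\mathbb P(\mathbb F^I\otimes\mathbb F^I\otimes\mathbb F^I)$ with the Segre variety $X=\mathbb P^{I-1}\times\mathbb P^{I-1}\times\mathbb P^{I-1}$, so that the tensors of rank at most $R$ form (the affine cone over) the $R$-th secant variety $\sigma_R(X)$. Generic uniqueness of the CPD of rank $R$ is then precisely the statement that $X$ is \emph{generically $R$-identifiable}: a general point of $\sigma_R(X)$ lies on a unique $(R-1)$-plane that is $R$-secant to $X$.

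The main tool I would use is the circle of ideas around Terracini's lemma and weak defectivity. By Terracini's lemma, for general points $p_1,\dots,p_R\in X$ the tangent space to $\sigma_R(X)$ at a general point of the span $\langle p_1,\dots,p_R\rangle$ equals $\langle T_{p_1}X,\dots,T_{p_R}X\rangle$. Two facts must then be established: (i) \emph{non-defectivity}, namely that these $R$ tangent spaces are independent, so that $\dim\sigma_R(X)$ attains the expected value $R(3I-2)-1$ (for $R\le k(I)$ in the table this is the case), and (ii) \emph{non-weak-defectivity}, namely that a general hyperplane containing $T_{p_1}X,\dots,T_{p_R}X$ is tangent to $X$ only along a finite set. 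By the Chiantini--Ciliberto criterion, non-weak-defectivity together with non-defectivity implies generic $R$-identifiability, which is exactly the desired conclusion.

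The heart of the argument is then the verification of (ii). Following Chiantini--Ottaviani \cite{ChiantiniandOttaviani}, I would reduce weak defectivity to a concrete rank condition: the contact locus of a general tangent hyperplane is cut out, to first order, by the second fundamental form of $X$ at the $p_i$, so weak defectivity is detected by a drop in rank of an explicit matrix $M(p_1,\dots,p_R)$ assembled from first- and second-order data (the tangent spaces and the Hessians of the Segre parametrization) at the chosen points. By semicontinuity of matrix rank, it suffices to exhibit \emph{one} configuration of points at which $M$ has the maximal rank forcing a zero-dimensional contact locus; such a configuration then works for general points, and the generic statement follows. For each fixed $I$ in the table one produces such a configuration — in practice by sampling random points (for instance with coordinates in a suitable finite field, so that the rank computation is exact and then lifts by reduction modulo a prime) and computing the rank of $M$ — which pins down the admissible bound $R=k(I)$.

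The step I expect to be the main obstacle is precisely the passage (ii): setting up the correct matrix $M$ whose maximal rank \emph{rigorously} certifies that the contact locus is finite, and doing so in a way that makes a single-point verification valid. Establishing non-defectivity is comparatively routine, since it again reduces by semicontinuity to independence of the tangent spaces at one point; but ruling out weak defectivity requires the full second-order analysis and the Chiantini--Ciliberto theorem connecting non-weak-defectivity to identifiability. The case-by-case nature of the table — each bound $k(I)$ being certified by an individual (computer-assisted) rank check rather than by a uniform closed-form argument — is the reason the result is stated as a table rather than a formula.
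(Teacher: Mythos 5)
This statement is not proved in the paper at all: Theorem \ref{cubic} is quoted verbatim from \cite[Theorem 1.2]{ChiantiniandOttaviani} as part of the literature overview in Subsection \ref{Subsection1.3}, so there is no internal proof to compare against. Your proposal is, in outline, a faithful reconstruction of the argument in that cited reference --- generic uniqueness recast as generic $R$-identifiability of the Segre variety, Terracini's lemma, the Chiantini--Ciliberto passage from non-defectivity plus non-weak-defectivity to identifiability, and a semicontinuity argument reducing everything to a computer-assisted rank/contact-locus verification at one explicit configuration of points --- which is precisely why the bounds $k(I)$ are given as a case-by-case table rather than a closed formula.
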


Finally, for a number of specific cases of dimensions and rank, generic uniqueness results have been obtained in \cite{tenBerge2004}.

\subsection{Results and organization}\label{Subsection1.4}
In this paper we use the conditions in \eqref{manyimplicationsm}  to establish CPD uniqueness in cases where $r_{\mathbf C} < R$.

In \S \ref{Section5} we assume that a tensor admits two PDs that have one or two factor matrices in common. We establish conditions under which both decompositions are the same. We obtain the following results.
\begin{proposition}\label{proponematrixisunique}
Let $\mathcal T=[\mathbf A, \mathbf B, \mathbf C]_R=[\bar{\mathbf A}, \bar{\mathbf B}, \mathbf C\mathbf{\Pi}{\mathbf \Lambda}_{\mathbf C}]_R$, where
$\mathbf{\Pi}$ is an $R\times R$ permutation matrix and ${\mathbf \Lambda}_{\mathbf C}$ is a nonsingular diagonal matrix. Let  the matrices $\mathbf A$, $\mathbf B$ and $\mathbf C$
satisfy the following condition
\begin{equation}\label{eq5.1}
\max(\min( k_{\mathbf A}, k_{\mathbf B}-1),\ \min( k_{\mathbf A}-1, k_{\mathbf B}))+k_{\mathbf C}\geq R+1.
\end{equation}
Then there exist nonsingular diagonal matrices ${\mathbf \Lambda}_{\mathbf A}$ and ${\mathbf \Lambda}_{\mathbf B}$ such that
$$\bar{\mathbf A}=\mathbf A\mathbf{\Pi}{\mathbf \Lambda}_{\mathbf A},\qquad
\bar{\mathbf B}=\mathbf B\mathbf{\Pi}{\mathbf \Lambda}_{\mathbf B},\qquad {\mathbf \Lambda}_{\mathbf A}{\mathbf \Lambda}_{\mathbf B}{\mathbf \Lambda}_{\mathbf C}=\mathbf I_R.
$$
\end{proposition}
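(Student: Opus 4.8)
The plan is to reduce to the case where the two decompositions share \emph{exactly} the third factor matrix, and then to run a Kruskal-type permutation argument driven by the $k$-rank of $\mathbf C$. First I would absorb $\mathbf\Pi$ and ${\mathbf \Lambda}_{\mathbf C}$ into the barred factors: reindexing the $R$ rank-one terms of $[\bar{\mathbf A},\bar{\mathbf B},\mathbf C\mathbf\Pi{\mathbf \Lambda}_{\mathbf C}]_R$ by $\mathbf\Pi$ and moving the scalars of ${\mathbf \Lambda}_{\mathbf C}$ into the first factor produces matrices $\widehat{\mathbf A},\widehat{\mathbf B}$ with $\mathcal T=[\mathbf A,\mathbf B,\mathbf C]_R=[\widehat{\mathbf A},\widehat{\mathbf B},\mathbf C]_R$. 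Since $k$-ranks are invariant under column permutation and nonzero scaling, \eqref{eq5.1} is unchanged, and it then suffices to prove $\widehat{\mathbf a}_r\widehat{\mathbf b}_r^T=\mathbf a_r\mathbf b_r^T$ for every $r$; rescaling and translating back through $\mathbf\Pi$ and ${\mathbf \Lambda}_{\mathbf C}$ yields the required diagonal matrices with ${\mathbf \Lambda}_{\mathbf A}{\mathbf \Lambda}_{\mathbf B}{\mathbf \Lambda}_{\mathbf C}=\mathbf I_R$. I drop the hats below and simply write $\bar{\mathbf A},\bar{\mathbf B}$ for the reduced factors.

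Contracting the identity $[\mathbf A,\mathbf B,\mathbf C]_R=[\bar{\mathbf A},\bar{\mathbf B},\mathbf C]_R$ with an arbitrary $\mathbf f\in\mathbb F^K$ in the third mode and putting $\mathbf d=\mathbf C^T\mathbf f$ gives $\mathbf A\,\textup{Diag}(\mathbf d)\,\mathbf B^T=\bar{\mathbf A}\,\textup{Diag}(\mathbf d)\,\bar{\mathbf B}^T$, so that for every $\mathbf d\in\textup{range}(\mathbf C^T)$,
\[
\sum_{r=1}^R d_r\,\mathbf E_r=\mathbf{O},\qquad \mathbf E_r:=\mathbf a_r\mathbf b_r^T-\bar{\mathbf a}_r\bar{\mathbf b}_r^T .
\]
Equivalently, with $\mathbf D:=(\mathbf A\odot\mathbf B)-(\bar{\mathbf A}\odot\bar{\mathbf B})$ one has $\mathbf D\mathbf C^T=\mathbf{O}$, so every row of $\mathbf D$ lies in $\textup{ker}(\mathbf C)$. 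By Definition \ref{defkrank} any nonzero vector of $\textup{ker}(\mathbf C)$ has at least $k_{\mathbf C}+1$ nonzero entries; hence each row of $\mathbf D$ is either zero or has support of size $\ge k_{\mathbf C}+1$. The whole task is to upgrade this to $\mathbf D=\mathbf{O}$, i.e.\ $\mathbf E_r=\mathbf{O}$ for all $r$.

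For the column matching I would use the symmetry $\mathbf A\leftrightarrow\mathbf B$, which transposes every $\mathbf E_r$ and interchanges the two entries of the maximum, so that I may assume the first term attains the maximum in \eqref{eq5.1}; this gives simultaneously $k_{\mathbf A}+k_{\mathbf C}\ge R+1$ and $k_{\mathbf B}+k_{\mathbf C}\ge R+2$. Probing $\mathbf E_r$ with $\mathbf x\in\mathbb F^I$ and $\mathbf y\in\mathbb F^J$ turns the displayed identity into $\mathbf u-\bar{\mathbf u}\in\textup{ker}(\mathbf C)$, where $u_r=(\mathbf x^T\mathbf a_r)(\mathbf b_r^T\mathbf y)$ and $\bar u_r=(\mathbf x^T\bar{\mathbf a}_r)(\bar{\mathbf b}_r^T\mathbf y)$. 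The support of $\mathbf u$ is governed by $k_{\mathbf A},k_{\mathbf B}$: for any set of columns of $\mathbf A$ (resp.\ $\mathbf B$) of size below $k_{\mathbf A}$ (resp.\ $k_{\mathbf B}$), one can choose $\mathbf x$ (resp.\ $\mathbf y$) orthogonal to exactly those columns and to no others, since $k_{\mathbf A}$ (resp.\ $k_{\mathbf B}$) linearly independent columns cannot force any further vanishing. Running a Kruskal permutation-lemma count against the threshold $k_{\mathbf C}+1$ then forces, for each fixed $r$, the identity $(\mathbf x^T\mathbf a_r)(\mathbf b_r^T\mathbf y)=(\mathbf x^T\bar{\mathbf a}_r)(\bar{\mathbf b}_r^T\mathbf y)$ for all $\mathbf x,\mathbf y$, i.e.\ $\mathbf E_r=\mathbf{O}$. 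Since $\mathbf E_r=\mathbf{O}$ means $\bar{\mathbf a}_r$ and $\bar{\mathbf b}_r$ are scalar multiples of $\mathbf a_r$ and $\mathbf b_r$, the normalization step then completes the argument.

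The crux, and the step I expect to be the main obstacle, is the last one: converting the global support statement ``nonzero row of $\mathbf D\Rightarrow$ weight $\ge k_{\mathbf C}+1$'' into the \emph{term-by-term} conclusion $\mathbf E_r=\mathbf{O}$. The difficulty is that $\mathbf x,\mathbf y$ control the supports of the \emph{unbarred} quantities $u_r$ directly, whereas the barred quantities $\bar u_r$ are reached only indirectly through the relation $\mathbf u-\bar{\mathbf u}\in\textup{ker}(\mathbf C)$; keeping both under simultaneous control is exactly what a permutation lemma accomplishes. This is also where the asymmetric ``$-1$'' and the outer maximum in \eqref{eq5.1} enter: because each rank-one term $\mathbf a_r\mathbf b_r^T$ is probed on both sides, one unit of $k$-rank may be spent on whichever of $\mathbf A,\mathbf B$ carries the finer probe, which is why only $\min(k_{\mathbf A},k_{\mathbf B}-1)$ (or, after the symmetry, its mirror) is required rather than $\min(k_{\mathbf A},k_{\mathbf B})$.
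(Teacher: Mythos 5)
Your reduction and your starting observation are both correct and coincide with the paper's opening moves: absorbing $\mathbf{\Pi}$ and ${\mathbf \Lambda}_{\mathbf C}$ so that the two PDs share $\mathbf C$ exactly, and noting that every row of $\mathbf D=(\mathbf A\odot\mathbf B)-(\bar{\mathbf A}\odot\bar{\mathbf B})$ lies in $\textup{ker}(\mathbf C)$, hence is zero or has weight at least $k_{\mathbf C}+1$. The gap is that your argument stops exactly where the proposition becomes nontrivial: the passage from this support statement to $\mathbf E_r=\mathbf O$ for every $r$ is announced (``running a Kruskal permutation-lemma count \dots then forces'') but never carried out, and you yourself flag it as the step you cannot do. Moreover, the plan as stated cannot work: your probes $\mathbf x,\mathbf y$ are chosen against columns of $\mathbf A$ and $\mathbf B$, so they control the support of $\mathbf u$ only, while the support of $\bar{\mathbf u}$ remains completely uncontrolled; consequently you have no upper bound on the weight of $\mathbf u-\bar{\mathbf u}$, and the threshold $k_{\mathbf C}+1$ never produces a contradiction. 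Controlling $\bar{\mathbf u}$ is precisely what a permutation lemma would supply, but proving one in this setting is the whole difficulty --- and even then it would match terms only up to an unknown permutation, whereas after your reduction you need the matching to be the identity.

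The paper closes this gap by probing against the \emph{shared} factor rather than the private ones. By Lemma \ref{lemma0.2unique} (which uses only $k_{\mathbf C}$), for every choice of $R-k_{\mathbf C}+1$ indices there is an $\mathbf x$ with $\mathbf x^T\mathbf c_i=0$ exactly on the complementary $k_{\mathbf C}-1$ indices; since $\mathbf C$ is common to both decompositions, this single probe annihilates the same terms on both sides, collapsing the identity $(\mathbf A\odot\mathbf B)\mathbf C^T\mathbf x=(\widehat{\mathbf A}\odot\widehat{\mathbf B})\mathbf C^T\mathbf x$ to
\[
\mathbf A_{\mathcal I^{\textup{c}}}\,\textup{Diag}(\mathbf d)\,\mathbf B_{\mathcal I^{\textup{c}}}^T=\widehat{\mathbf A}_{\mathcal I^{\textup{c}}}\,\textup{Diag}(\mathbf d)\,\widehat{\mathbf B}_{\mathcal I^{\textup{c}}}^T,
\]
where the subscript denotes restriction to the columns indexed by $\mathcal I^{\textup{c}}=\{i_1,\dots,i_{R-k_{\mathbf C}+1}\}$ and $\mathbf d$ is entrywise nonzero. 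Because \eqref{eq5.1} gives $\min(k_{\mathbf A},k_{\mathbf B})\geq R-k_{\mathbf C}+1$, the unbarred submatrices have full column rank, and comparing column spaces (and, after transposing, row spaces) yields $\textup{span}\{\mathbf a_{i_1},\dots\}\subseteq\textup{span}\{\widehat{\mathbf a}_{i_1},\dots\}$ and likewise for $\mathbf B$. These inclusions, taken over all index sets, feed into Lemma \ref{lemma0.1unique}, a downward induction on nested spans whose hypothesis consumes the other half of \eqref{eq5.1}, namely $\max(k_{\mathbf A},k_{\mathbf B})\geq R-k_{\mathbf C}+2$; this gives $\mathbf A=\widehat{\mathbf A}{\mathbf \Lambda}$ columnwise. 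Finally $\bar{\mathbf B}$ is recovered in one stroke because $\mathbf C\odot\mathbf A$ has full column rank (condition (\text{K{\scriptsize 1}}) holds for $\mathbf A$ and $\mathbf C$), not by repeating the probe argument. If you want to salvage your plan, the one change needed is the probe: test with vectors orthogonal to columns of $\mathbf C$, not of $\mathbf A$ or $\mathbf B$; after that, the missing ingredient is the nested-span induction, not a permutation lemma.
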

\begin{proposition}\label{proptwomatrixisunique}
Let $\mathcal T=[\mathbf A, \mathbf B, \mathbf C]_R=[\mathbf A\mathbf{\Pi}_{\mathbf A}{\mathbf \Lambda}_{\mathbf A}, \bar{\mathbf B}, \mathbf C\mathbf{\Pi}_{\mathbf C}{\mathbf \Lambda}_{\mathbf C}]_R$, where
$\mathbf{\Pi}_{\mathbf A}$ and $\mathbf{\Pi}_{\mathbf C}$ are  $R\times R$ permutation matrices and where ${\mathbf \Lambda}_{\mathbf A}$ and ${\mathbf \Lambda}_{\mathbf C}$ are nonsingular diagonal matrices.
Let  the matrices $\mathbf A$, $\mathbf B$ and $\mathbf C$
satisfy  at least one of the following conditions
\begin{equation}\label{eq5.two}
\begin{split}
k_{\mathbf C}\geq 2\quad \text{ and }\quad \max(\min( k_{\mathbf A}, k_{\mathbf B}-1),\ \min( k_{\mathbf A}-1, k_{\mathbf B}))+r_{\mathbf C}\geq R+1,\\
k_{\mathbf A}\geq 2\quad \text{ and }\quad \max(\min( k_{\mathbf B}, k_{\mathbf C}-1),\ \min( k_{\mathbf B}-1, k_{\mathbf C}))+r_{\mathbf A}\geq R+1.
\end{split}
\end{equation}
Then $\mathbf{\Pi}_{\mathbf A}=\mathbf{\Pi}_{\mathbf C}$ and  $\bar{\mathbf B}=\mathbf B\mathbf{\Pi}_{\mathbf A}{\mathbf \Lambda}_{\mathbf A}^{-1}{\mathbf \Lambda}_{\mathbf C}^{-1}$.
\end{proposition}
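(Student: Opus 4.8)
The plan is to normalize one of the two matched factor matrices to the identity permutation and then to exploit the second matched factor through a contraction of its mode; this contraction is precisely what makes the \emph{rank} $r$, rather than the $k$-rank, appear in \eqref{eq5.two}. First observe that the two lines of \eqref{eq5.two} are interchanged by swapping the roles of $\mathbf A$ and $\mathbf C$ (the matrix $\mathbf B$ always playing the middle, unmatched, role), since $\max(\min(a,b-1),\min(a-1,b))$ is symmetric in $a,b$ and the conclusion is symmetric as well. Hence it suffices to treat the second line; assume therefore $k_{\mathbf A}\ge 2$ and $\max(\min(k_{\mathbf B},k_{\mathbf C}-1),\min(k_{\mathbf B}-1,k_{\mathbf C}))+r_{\mathbf A}\ge R+1$. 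I would reindex the second decomposition by $\mathbf{\Pi}_{\mathbf A}^{-1}$ and absorb the diagonal scalings, so that after relabelling the first factor is matched exactly, $\bar{\mathbf A}\mapsto\mathbf A$, while the third factor carries only the residual permutation $\mathbf{\Pi}:=\mathbf{\Pi}_{\mathbf C}\mathbf{\Pi}_{\mathbf A}^{-1}$, i.e.\ $\bar{\mathbf C}\mapsto\mathbf C\mathbf{\Pi}\mathbf{\Lambda}$ for some nonsingular diagonal $\mathbf{\Lambda}$. In these terms the assertion $\mathbf{\Pi}_{\mathbf A}=\mathbf{\Pi}_{\mathbf C}$ becomes $\mathbf{\Pi}=\mathbf I_R$, and the claimed form of $\bar{\mathbf B}$ becomes that $\hat{\mathbf B}$ (the relabelled $\bar{\mathbf B}$) equals $\mathbf B$ up to column scaling.

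The key step is to contract the matched mode. Because both decompositions now share the first factor $\mathbf A$, contracting the first mode against an arbitrary vector $\mathbf x$ produces the \emph{common} diagonal $\textup{Diag}(\mathbf A^T\mathbf x)$ on both sides, so that
\begin{equation*}
\mathbf B\,\textup{Diag}(\mathbf d)\,\mathbf C^T=\hat{\mathbf B}\,\textup{Diag}(\mathbf d)\,(\mathbf C\mathbf{\Pi}\mathbf{\Lambda})^T \qquad\text{for every } \mathbf d\in\textup{range}(\mathbf A^T).
\end{equation*}
This is exactly the mechanism that brings in $r_{\mathbf A}=\dim\textup{range}(\mathbf A^T)$ instead of $k_{\mathbf A}$: in Proposition \ref{proponematrixisunique} the matched factor is known only up to an \emph{unknown} permutation, which forces a column-by-column ($k$-rank) analysis, whereas here the permutation of the matched factor has already been normalized to the identity, so its entire row space is available. (Note that one cannot cheat by replacing $\mathbf A$ with a full-row-rank matrix sharing its row space, since two matrices with the same row space have the same $\ker$, hence the same $k$-rank; the improvement genuinely stems from the second matched factor.) Reindexing the right-hand side through $\mathbf{\Pi}$, the displayed identity takes the form $\mathbf G(\mathbf d)\,\mathbf C^T=\mathbf 0$ for all $\mathbf d\in\textup{range}(\mathbf A^T)$, where the $s$-th column of $\mathbf G(\mathbf d)$ couples $d_s\mathbf b_s$ with a $\mathbf{\Pi}$-dictated scaled copy of a column of $\hat{\mathbf B}$; consequently every row of $\mathbf G(\mathbf d)$ lies in $\ker\mathbf C$.

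I would then finish by a kernel/pencil argument. Using that any nonzero vector of $\ker\mathbf C$ has at least $k_{\mathbf C}+1$ nonzero entries, that $k_{\mathbf A}\ge 2$ guarantees no two columns of $\mathbf A$ are proportional (so $\mathbf d$ may be chosen to separate any two coordinates), and the max--min bound on $\mathbf B$ and $\mathbf C$, I would show that any nontrivial cycle of $\mathbf{\Pi}$ would force a vector of $\ker\mathbf C$ with too few nonzero entries, a contradiction; this yields $\mathbf{\Pi}=\mathbf I_R$, i.e.\ $\mathbf{\Pi}_{\mathbf A}=\mathbf{\Pi}_{\mathbf C}$. With $\mathbf{\Pi}=\mathbf I_R$ the coupling decouples, the $s$-th column of $\mathbf G(\mathbf d)$ becoming $d_s(\mathbf b_s-\lambda_s\hat{\mathbf b}_s)$ with $\lambda_s$ the $s$-th diagonal entry of $\mathbf{\Lambda}$, and a last application of the same kernel argument gives $\hat{\mathbf b}_s\parallel\mathbf b_s$, which is the stated expression for $\bar{\mathbf B}$.

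The main obstacle is this final combinatorial step: translating ``every row of $\mathbf G(\mathbf d)$ lies in $\ker\mathbf C$, for all $\mathbf d$ in an $r_{\mathbf A}$-dimensional space'' into the triviality of $\mathbf{\Pi}$. One must track carefully how the linear dependencies among the columns of $\mathbf A$ (only $r_{\mathbf A}$, not $k_{\mathbf A}$, being controlled) interact with the support constraints coming from $k_{\mathbf B}$ and $k_{\mathbf C}$, and verify that the max--min quantity in \eqref{eq5.two} is exactly what is needed to exclude every nontrivial permutation cycle.
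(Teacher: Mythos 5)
Your reduction, normalization, and contraction identity are all correct, and they are the exact mirror image of the paper's own opening moves: the paper confines itself to the first line of \eqref{eq5.two} and contracts the $\mathbf C$-mode, while you take the second line and contract the $\mathbf A$-mode; your observation that normalizing the matched permutation away is what makes the whole of $\textup{range}(\mathbf A^T)$ (hence $r_{\mathbf A}$ rather than $k_{\mathbf A}$) available is precisely the mechanism the paper exploits. The genuine gap is that your argument stops there: proving $\mathbf{\Pi}=\mathbf I_R$ is not a residual ``combinatorial step'' --- it \emph{is} the proof, and your proposal defers it. Concretely, the paper's proof (transposed to your orientation, with $\pi$ the permutation of $\{1,\dots,R\}$ such that $(\mathbf C\mathbf{\Pi})_s=\mathbf c_{\pi(s)}$) runs as follows: fix columns $\mathbf a_{i_1},\dots,\mathbf a_{i_r}$ forming a basis of $\textup{range}(\mathbf A)$ and take as contraction vectors the columns of a right inverse of $\left[\begin{matrix}\mathbf a_{i_1}&\dots&\mathbf a_{i_r}\end{matrix}\right]^T$, so each resulting $\mathbf d$ has support contained in $\{i_k\}\cup\{\text{non-basis indices}\}$, of size $L\leq R-r_{\mathbf A}+1$; then a rank/row-space comparison of $\sum_s d_s\mathbf b_s\mathbf c_s^T=\sum_s \lambda_s d_s\hat{\mathbf b}_s\mathbf c_{\pi(s)}^T$ (using $k_{\mathbf B}\geq L$ and $k_{\mathbf C}\geq L$, so the slice has rank exactly $L$) gives $\textup{span}\{\mathbf c_s\}\subseteq\textup{span}\{\mathbf c_{\pi(s)}\}$ over the support, and $k_{\mathbf C}\geq L+1$ upgrades this to $\pi(\textup{supp}(\mathbf d))=\textup{supp}(\mathbf d)$; finally --- the part with no counterpart in your proposal --- these finitely many invariant supports are assembled into $\pi=\textup{id}$: since $k_{\mathbf A}\geq 2$, every non-basis column of $\mathbf A$ has at least two nonzero coefficients in the basis, so every non-basis index lies in at least two invariant sets, whence $\pi$ preserves the set of non-basis indices, then fixes each basis index $i_k$, and varying the basis over all columns of $\mathbf A$ gives $\pi=\textup{id}$ (steps (iii)--(v) of the paper's proof, which consume the hypotheses in an essential way).

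Moreover, the mechanism you sketch for the missing step would not work as stated. The rows of $\mathbf G(\mathbf d)$ involve the unknown matrix $\hat{\mathbf B}$, whose $k$-rank is controlled by no hypothesis, so their supports cannot be bounded; and the dependencies among columns of $\mathbf C$ that one can extract from $\mathbf G(\mathbf d)\mathbf C^T=\mzero$ are supported on $\textup{supp}(\mathbf d)\cup\pi(\textup{supp}(\mathbf d))$, a set of size up to $2(R-r_{\mathbf A}+1)$, which \eqref{eq5.two} does not force to be at most $k_{\mathbf C}$ --- so ``a vector of $\textup{ker}(\mathbf C)$ with too few nonzero entries'' never materializes; it is the rank/row-space comparison above that does the work. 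A further caution: your symmetry remark correctly interchanges the two \emph{lines} of \eqref{eq5.two}, but it cannot interchange the two branches of the max \emph{inside} a line, because $\mathbf A\leftrightarrow\mathbf B$ is not a symmetry of the problem (one is matched, the other is not). The contraction argument needs the matched factor to carry the stronger bound, $k_{\mathbf C}\geq R-r_{\mathbf A}+2$ with only $k_{\mathbf B}\geq R-r_{\mathbf A}+1$; the opposite branch, where $k_{\mathbf B}$ carries the $+2$, is not covered by it (the paper's own proof likewise confines itself to its corresponding branch \eqref{eq:proofprop1.24intro}). So your closing worry that one must ``verify that the max--min quantity is exactly what is needed'' points at a real obstacle, not a bookkeeping detail.
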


Note that in Propositions \ref{proponematrixisunique} and \ref{proptwomatrixisunique} we do not assume that $R$ is minimal. Neither do we assume in Proposition \ref{proptwomatrixisunique} that $\mathbf{\Pi}_{\mathbf A}$ and $\mathbf{\Pi}_{\mathbf C}$ are the same.

In \S \ref{Section uniqueness CPD} we obtain new results concerning the uniqueness of the overall CPD
by combining  \eqref{manyimplicationsm}  with results from \S \ref{Section5}.

Combining \eqref{manyimplicationsm}  with Proposition \ref{proponematrixisunique}  we prove the following  statements.
\begin{proposition}\label{ProFullUniq1onematrixWm}
Let $\mathcal T=[\mathbf A, \mathbf B, \mathbf C]_R$ and
$m_{\mathbf C}:=R-r_{\mathbf C}+2$.
Assume that
\begin{romannum}
\item
condition \eqref{eq5.1} holds;
\item
condition $\text{\textup{(W{\scriptsize m}}}_{\mathbf C}$$\text{\textup{)}}$ holds for $\mathbf A$, $\mathbf B$, and $\mathbf C$;
\item
$\mathbf A\odot\mathbf B$ has full column rank.\hfill $(\textup{C{\scriptsize 1}})$
\end{romannum}
Then $r_{\mathcal T}=R$ and the CPD of tensor $\mathcal T$  is unique.
\end{proposition}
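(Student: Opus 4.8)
The plan is to establish the result in two stages that mirror the structure of \eqref{manyimplicationsm} followed by Proposition \ref{proponematrixisunique}. First I would show that the hypotheses force the third factor matrix of $\mathcal T$ to be unique and $r_{\mathcal T}=R$; then, knowing that every competing decomposition agrees with $\mathbf C$ up to permutation and scaling, I would invoke Proposition \ref{proponematrixisunique} to promote this to essential uniqueness of the whole CPD. The technical content sits almost entirely in the first stage, since the second stage is a direct citation.

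The first thing I would extract from \eqref{eq5.1} is the bound $\min(k_{\mathbf A},k_{\mathbf B})\ge m_{\mathbf C}-1$. A short case distinction shows that $\max(\min(k_{\mathbf A},k_{\mathbf B}-1),\min(k_{\mathbf A}-1,k_{\mathbf B}))\le\min(k_{\mathbf A},k_{\mathbf B})$ (indeed the two agree unless $k_{\mathbf A}=k_{\mathbf B}$, in which case the max is $\min(k_{\mathbf A},k_{\mathbf B})-1$). Since $k_{\mathbf C}\le r_{\mathbf C}$, condition \eqref{eq5.1} then gives $\min(k_{\mathbf A},k_{\mathbf B})\ge R+1-k_{\mathbf C}\ge R+1-r_{\mathbf C}=m_{\mathbf C}-1$. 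The same estimate yields $k_{\mathbf C}\ge R+1-\min(k_{\mathbf A},k_{\mathbf B})\ge 1$, so hypothesis (i) of Proposition \ref{prmostgeneraldis} is in force.

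With this in hand I would enter the chain \eqref{manyimplicationsm} at the level of the $(\textup{W})$ conditions. Hypothesis (ii) supplies $(\textup{W}m_{\mathbf C})$, and because $\min(k_{\mathbf A},k_{\mathbf B})\ge m_{\mathbf C}-1$, implication \eqref{eq1.14} (equivalently Proposition \ref{Prop:KCUW}(8)) propagates it downward to the full list $(\textup{W}1),\dots,(\textup{W}m_{\mathbf C})$. Combined with hypothesis (iii), which is exactly $(\textup{C}1)$, and with $k_{\mathbf C}\ge1$, these are precisely the inputs of Proposition \ref{prmostgeneraldis} for $m=m_{\mathbf C}$, so I would conclude $r_{\mathcal T}=R$ and that the third factor matrix of $\mathcal T$ is unique. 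Consequently any decomposition $\mathcal T=[\bar{\mathbf A},\bar{\mathbf B},\bar{\mathbf C}]_R$ satisfies $\bar{\mathbf C}=\mathbf C\mathbf{\Pi}{\mathbf \Lambda}_{\mathbf C}$ for some permutation matrix $\mathbf{\Pi}$ and nonsingular diagonal ${\mathbf \Lambda}_{\mathbf C}$.

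This puts us verbatim in the hypothesis of Proposition \ref{proponematrixisunique}, namely $\mathcal T=[\mathbf A,\mathbf B,\mathbf C]_R=[\bar{\mathbf A},\bar{\mathbf B},\mathbf C\mathbf{\Pi}{\mathbf \Lambda}_{\mathbf C}]_R$ with \eqref{eq5.1} holding (our hypothesis (i)). That proposition then produces nonsingular diagonal ${\mathbf \Lambda}_{\mathbf A},{\mathbf \Lambda}_{\mathbf B}$ with $\bar{\mathbf A}=\mathbf A\mathbf{\Pi}{\mathbf \Lambda}_{\mathbf A}$, $\bar{\mathbf B}=\mathbf B\mathbf{\Pi}{\mathbf \Lambda}_{\mathbf B}$ and ${\mathbf \Lambda}_{\mathbf A}{\mathbf \Lambda}_{\mathbf B}{\mathbf \Lambda}_{\mathbf C}=\mathbf I_R$, which is exactly essential uniqueness in the sense of Definition \ref{Def:1.5}. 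The step I expect to need the most care is feeding the hypotheses into Proposition \ref{prmostgeneraldis}: verifying that $(\textup{W}m_{\mathbf C})$ really does propagate to the whole list (this is where the $k$-rank bound above is essential), and checking the dimension requirement $m_{\mathbf C}\le\min(I,J)$ of that proposition, which is consistent with $m_{\mathbf C}-1\le\min(k_{\mathbf A},k_{\mathbf B})\le\min(I,J)$ but whose boundary behaviour deserves explicit attention.
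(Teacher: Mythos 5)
Your proof is correct and takes essentially the same route as the paper's: the paper likewise extracts $k_{\mathbf C}\geq 1$ and $\min(k_{\mathbf A},k_{\mathbf B})\geq m_{\mathbf C}-1$ from \eqref{eq5.1}, then cites Proposition \ref{Prop:1.16} --- whose final implications are exactly your combination of \eqref{eq1.14} (Proposition \ref{Prop:KCUW}(8)) with Proposition \ref{prmostgeneraldis} --- and concludes with Proposition \ref{proponematrixisunique}. Your residual worry about the requirement $m_{\mathbf C}\leq\min(I,J)$ is harmless: hypothesis (ii) of the statement presupposes that $\mathcal C_{m_{\mathbf C}}(\mathbf A)$ and $\mathcal C_{m_{\mathbf C}}(\mathbf B)$ are defined, which already forces $m_{\mathbf C}\leq\min(I,J)$, and the paper's appeal to Proposition \ref{Prop:1.16} rests on the same implicit fact.
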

\begin{corollary}\label{ProFullUniq1onematrix}
Let $\mathcal T=[\mathbf A, \mathbf B, \mathbf C]_R$ and
$m_{\mathbf C}:=R-r_{\mathbf C}+2$.
Assume that
\begin{romannum}
\item
condition \eqref{eq5.1} holds;
\item
condition $\text{\textup{(U{\scriptsize m}}}_{\mathbf C}$$\text{\textup{)}}$ holds for $\mathbf A$ and $\mathbf B$.
\end{romannum}
Then $r_{\mathcal T}=R$ and the CPD of tensor $\mathcal T$  is unique.
\end{corollary}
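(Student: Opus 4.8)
The plan is to verify the three hypotheses of Proposition \ref{ProFullUniq1onematrixWm} from the two hypotheses of the Corollary, after which the conclusion is immediate by applying that Proposition. Write $m:=m_{\mathbf C}=R-r_{\mathbf C}+2$. Hypothesis (i) of the Corollary coincides with hypothesis (i) of Proposition \ref{ProFullUniq1onematrixWm}, so nothing has to be done there. It therefore remains to extract, from the single assumption that $(\textup{U{\scriptsize m}})$ holds for $\mathbf A$ and $\mathbf B$, both condition $(\textup{W{\scriptsize m}})$ for $\mathbf A$, $\mathbf B$, $\mathbf C$ and the full column rank of $\mathbf A\odot\mathbf B$.

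First I would obtain hypothesis (ii) of the Proposition: since $(\textup{U{\scriptsize m}})$ holds for $\mathbf A$ and $\mathbf B$, Proposition \ref{Prop:KCUW}(3) yields $(\textup{W{\scriptsize m}})$ for $\mathbf A$, $\mathbf B$, $\mathbf C$ at once. Heuristically this is because the defining implication of $(\textup{W{\scriptsize m}})$ only has to be checked for $\mathbf d\in\textup{range}(\mathbf C^T)\subseteq\mathbb F^R$, whereas $(\textup{U{\scriptsize m}})$ demands it for every $\mathbf d\in\mathbb F^R$.

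Next I would obtain hypothesis (iii). Applying Proposition \ref{Prop:KCUW}(7) to descend from the index $m$ to the index $1$, the assumption $(\textup{U{\scriptsize m}})$ gives $(\textup{U{\scriptsize 1}})$ for $\mathbf A$ and $\mathbf B$. The equivalence $(\textup{U{\scriptsize 1}})\Leftrightarrow(\textup{C{\scriptsize 1}})$ recorded in scheme \eqref{maindiagramintro} then converts this into $(\textup{C{\scriptsize 1}})$, that is, $\mathcal C_1(\mathbf A)\odot\mathcal C_1(\mathbf B)=\mathbf A\odot\mathbf B$ has full column rank. With hypotheses (i)--(iii) of Proposition \ref{ProFullUniq1onematrixWm} all verified, that Proposition delivers $r_{\mathcal T}=R$ and uniqueness of the CPD of $\mathcal T$.

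Since every step merely invokes an implication already catalogued in Proposition \ref{Prop:KCUW} and scheme \eqref{maindiagramintro}, there is no genuine obstacle here. The only point requiring a moment's care is the passage to the full column rank of $\mathbf A\odot\mathbf B$: this is not a one-step consequence of $(\textup{U{\scriptsize m}})$, but requires first descending to $(\textup{U{\scriptsize 1}})$ and only then using the $m=1$ equivalence between $(\textup{U{\scriptsize 1}})$ and $(\textup{C{\scriptsize 1}})$.
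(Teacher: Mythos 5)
Your proposal is correct and follows essentially the same route as the paper's own proof: both verify the hypotheses of Proposition \ref{ProFullUniq1onematrixWm} by using Proposition \ref{Prop:KCUW}(3) to pass from $(\textup{U{\scriptsize m}})$ to $(\textup{W{\scriptsize m}})$, and Proposition \ref{Prop:KCUW}(7) together with the equivalence $(\textup{U{\scriptsize 1}})\Leftrightarrow(\textup{C{\scriptsize 1}})$ to obtain full column rank of $\mathbf A\odot\mathbf B$. The only difference is expository: you spell out the descent $(\textup{U{\scriptsize m}})\Rightarrow(\textup{U{\scriptsize 1}})$ and the role of $\textup{range}(\mathbf C^T)\subseteq\mathbb F^R$ more explicitly than the paper does.
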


\begin{corollary}\label{ProFullUniq1onematrixHm}
Let $\mathcal T=[\mathbf A, \mathbf B, \mathbf C]_R$ and
$m_{\mathbf C}:=R-r_{\mathbf C}+2$.
Assume that
\begin{romannum}
\item
condition \eqref{eq5.1} holds;
\item
condition $\text{\textup{(H{\scriptsize m}}}_{\mathbf C}$$\text{\textup{)}}$ holds for $\mathbf A$ and $\mathbf B$.
\end{romannum}
Then $r_{\mathcal T}=R$ and the CPD of tensor $\mathcal T$  is unique.
\end{corollary}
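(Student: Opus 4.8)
The plan is to deduce this corollary from the already-established Proposition~\ref{ProFullUniq1onematrixWm} (equivalently, from Corollary~\ref{ProFullUniq1onematrix}) by feeding its hypotheses through the implications collected in Proposition~\ref{Prop:KCUW} and diagram~\eqref{maindiagramintro}. The corollary carries only two assumptions, namely \eqref{eq5.1} and condition (H$m_{\mathbf C}$) for $\mathbf A$ and $\mathbf B$, whereas Proposition~\ref{ProFullUniq1onematrixWm} asks for three: \eqref{eq5.1}, condition (W$m_{\mathbf C}$) for the triple $(\mathbf A,\mathbf B,\mathbf C)$, and condition (C1), i.e.\ full column rank of $\mathbf A\odot\mathbf B$. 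Thus the whole task is to show that (H$m_{\mathbf C}$) simultaneously supplies (W$m_{\mathbf C}$) and (C1); assumption \eqref{eq5.1} is shared by both statements and needs nothing further. Note also that $m_{\mathbf C}=R-r_{\mathbf C}+2\geq 2$, so the parameter is legitimate and the monotonicity items of Proposition~\ref{Prop:KCUW} apply down to $m=1$.

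First I would invoke Proposition~\ref{Prop:KCUW}(2), which gives $(\textup{H}m_{\mathbf C})\Rightarrow(\textup{U}m_{\mathbf C})$. At this point the shortest route is simply to observe that the hypotheses of Corollary~\ref{ProFullUniq1onematrix}, namely \eqref{eq5.1} together with $(\textup{U}m_{\mathbf C})$ for $\mathbf A$ and $\mathbf B$, are now met, so the conclusion $r_{\mathcal T}=R$ and uniqueness follows at once. If instead I want to reduce directly to Proposition~\ref{ProFullUniq1onematrixWm}, I would continue by applying Proposition~\ref{Prop:KCUW}(3) to obtain $(\textup{U}m_{\mathbf C})\Rightarrow(\textup{W}m_{\mathbf C})$, which secures the second hypothesis of that proposition.

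For the remaining hypothesis (C1) I would descend the ladder of implications. Proposition~\ref{Prop:KCUW}(5) yields $(\textup{H}m_{\mathbf C})\Rightarrow(\textup{H1})$, and then Proposition~\ref{Prop:KCUW}(2) gives $(\textup{H1})\Rightarrow(\textup{U1})$. Since for $m=1$ the associated vector $\hatdSmR{1}{R}$ coincides with $\mathbf d$ itself, condition (U1) reads $(\mathbf A\odot\mathbf B)\mathbf d=\vzero\Rightarrow\mathbf d=\vzero$, which is exactly the statement that $\mathbf A\odot\mathbf B$ has full column rank, i.e.\ (C1). (One may equally read the equivalence $(\textup{C1})\Leftrightarrow(\textup{U1})$ off the lower-right corner of diagram~\eqref{maindiagramintro}.) With \eqref{eq5.1}, (W$m_{\mathbf C}$) and (C1) all in hand, Proposition~\ref{ProFullUniq1onematrixWm} delivers the conclusion.

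There is no serious obstacle here: the corollary is a bookkeeping consequence of the single implication $(\textup{H}m)\Rightarrow(\textup{U}m)$ together with the monotonicity of these conditions in $m$. The only points demanding a little care are matching the abstract conditions to their concrete meaning at the endpoints, i.e.\ recognizing that (C1) and (U1) both express full column rank of $\mathbf A\odot\mathbf B$, and making sure the parameter is consistently $m=m_{\mathbf C}=R-r_{\mathbf C}+2$ throughout, so that every cited implication is applied to the same triple $(\mathbf A,\mathbf B,\mathbf C)$.
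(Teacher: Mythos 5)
Your proposal is correct and takes essentially the same route as the paper: the paper's proof is exactly your ``shortest route'' --- apply Proposition~\ref{Prop:KCUW}(2) to get $(\textup{H{\scriptsize m}}_{\mathbf C})\Rightarrow(\textup{U{\scriptsize m}}_{\mathbf C})$ and then invoke Corollary~\ref{ProFullUniq1onematrix}. Your longer alternative (descending to (C{\scriptsize 1}) and (W{\scriptsize m$_{\mathbf C}$}) to reach Proposition~\ref{ProFullUniq1onematrixWm}) simply unfolds the paper's own proof of Corollary~\ref{ProFullUniq1onematrix}, differing only in that you descend via $(\textup{H{\scriptsize 1}})\Rightarrow(\textup{U{\scriptsize 1}})$ rather than via Proposition~\ref{Prop:KCUW}(7).
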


\begin{corollary}\label{ProFullUniq1onematrixcor1}
Let $\mathcal T=[\mathbf A, \mathbf B, \mathbf C]_R$ and
$m_{\mathbf C}:=R-r_{\mathbf C}+2$.
Assume that
\begin{romannum}
\item
condition \eqref{eq5.1} holds;
\item
$\mathcal C_{m_{\mathbf C}}(\mathbf A)\odot \mathcal C_{m_{\mathbf C}}(\mathbf B)$ has full column rank.
\end{romannum}
Then $r_{\mathcal T}=R$ and the CPD of  tensor $\mathcal T$  is unique.
\end{corollary}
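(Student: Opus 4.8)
The plan is to derive this corollary directly from Corollary \ref{ProFullUniq1onematrix} by invoking a single implication from the hierarchy recorded in Proposition \ref{Prop:KCUW}. The only gap between the two statements is that here hypothesis (ii) asserts that $\mathcal C_{m_{\mathbf C}}(\mathbf A)\odot \mathcal C_{m_{\mathbf C}}(\mathbf B)$ has full column rank, which is precisely condition $(\textup{C{\scriptsize m}})$ for $m=m_{\mathbf C}$ applied to $\mathbf A$ and $\mathbf B$, whereas Corollary \ref{ProFullUniq1onematrix} requires condition $(\textup{U{\scriptsize m}})$ for $m=m_{\mathbf C}$. Hypothesis (i), namely condition \eqref{eq5.1}, is common to both statements, so it can be carried over verbatim.

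First I would recall from Proposition \ref{Prop:KCUW}(2) (equivalently, from the vertical arrows of scheme \eqref{maindiagramintro}) that $(\textup{C{\scriptsize m}})\Rightarrow(\textup{U{\scriptsize m}})$ holds for any fixed integer $m$ and any pair of matrices. Setting $m=m_{\mathbf C}=R-r_{\mathbf C}+2$ and applying this to $\mathbf A$ and $\mathbf B$, hypothesis (ii) immediately yields that condition $(\textup{U{\scriptsize m}})$ for $m=m_{\mathbf C}$ holds for $\mathbf A$ and $\mathbf B$. At that point both hypotheses of Corollary \ref{ProFullUniq1onematrix} are in force: condition \eqref{eq5.1} by assumption, and condition $(\textup{U{\scriptsize m}})$ for $m=m_{\mathbf C}$ by the previous step. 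Invoking that corollary then gives $r_{\mathcal T}=R$ together with uniqueness of the CPD of $\mathcal T$, which is exactly the desired conclusion.

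I do not expect any genuine obstacle here: the statement is engineered to sit one notch below Corollary \ref{ProFullUniq1onematrix} in the chain $(\textup{K{\scriptsize m}})\Rightarrow(\textup{C{\scriptsize m}})\Rightarrow(\textup{U{\scriptsize m}})$, so the entire content of the proof is the single implication $(\textup{C{\scriptsize m}})\Rightarrow(\textup{U{\scriptsize m}})$ already established in \cite{Part I}. The only point requiring a moment's care is the bookkeeping: confirming that the full-column-rank hypothesis is literally condition $(\textup{C{\scriptsize m}})$ with the correct index $m=m_{\mathbf C}$, and that this is the same value of $m$ used throughout Corollary \ref{ProFullUniq1onematrix}.

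As an alternative route, should a self-contained argument be preferred, one could bypass Corollary \ref{ProFullUniq1onematrix} and apply Proposition \ref{ProFullUniq1onematrixWm} directly. In that case I would use scheme \eqref{maindiagramintro} to extract from condition $(\textup{C{\scriptsize m}})$ for $m=m_{\mathbf C}$ both condition $(\textup{W{\scriptsize m}})$ for $m=m_{\mathbf C}$ (via $(\textup{C{\scriptsize m}})\Rightarrow(\textup{U{\scriptsize m}})\Rightarrow(\textup{W{\scriptsize m}})$) and condition $(\textup{C{\scriptsize 1}})$, i.e.\ full column rank of $\mathbf A\odot\mathbf B$ (via $(\textup{C{\scriptsize m}})\Rightarrow(\textup{C{\scriptsize 1}})$), thereby supplying all three hypotheses of Proposition \ref{ProFullUniq1onematrixWm}. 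Chaining through Corollary \ref{ProFullUniq1onematrix} is shorter, however, and I would present that version.
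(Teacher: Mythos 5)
Your proposal is correct and is essentially identical to the paper's own proof: the paper likewise notes that, by Proposition \ref{Prop:KCUW}(2), condition $(\textup{C{\scriptsize m}}_{\mathbf C})$ implies $(\textup{U{\scriptsize m}}_{\mathbf C})$ for $\mathbf A$ and $\mathbf B$, and then invokes Corollary \ref{ProFullUniq1onematrix} with hypothesis \eqref{eq5.1} carried over unchanged. Your alternative route via Proposition \ref{ProFullUniq1onematrixWm} is also valid, but the version you chose to present is the paper's argument.
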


Note that Proposition \ref{K2C2U2unique} is a special case of the results in  Proposition \ref{ProFullUniq1onematrixWm},
Corollaries \ref{ProFullUniq1onematrix}--\ref{ProFullUniq1onematrixcor1} and Kruskal's Theorem \ref{theoremKruskal}.
In the former, one factor matrix is assumed to have full column rank $(r_{\mathbf C}=R)$ while in the latter this is not necessary
($r_{\mathbf C}=R-m_{\mathbf C}+2$ with $m_{\mathbf C}\geq 2$). The condition on $\mathbf C$ is relaxed by tightening the conditions on $\mathbf A$ and $\mathbf B$.
For instance, Corollary \ref{ProFullUniq1onematrix} allows  $r_{\mathbf C}=R-m_{\mathbf C}+2$ with $m:=m_{\mathbf C}\geq 2$ by imposing \eqref{eq5.1}
and $(\textup{C{\scriptsize m}})$.
From scheme \eqref{maindiagramintro} we have that $(\textup{C{\scriptsize m}})$
implies $(\textup{C{\scriptsize 2}})$, and hence
$(\textup{C{\scriptsize m}})$ is more restrictive than $(\textup{C{\scriptsize 2}})$.
Scheme \eqref{maindiagramintro} further shows that Corollary \ref{ProFullUniq1onematrix}
is more general than Corollaries \ref{ProFullUniq1onematrixHm} and \ref{ProFullUniq1onematrixcor1}.
In turn, Proposition \ref{ProFullUniq1onematrixWm} is more general than Corollary \ref{ProFullUniq1onematrix}.
Note that we did not formulate a combination of implication $(\textup{K{\scriptsize m}})\Rightarrow (\textup{C{\scriptsize m}})$ (or (\textup{H{\scriptsize m}})) from scheme \eqref{manyimplicationsm}  with Proposition \ref{proponematrixisunique}. Such a  combination  leads to a result that is equivalent
to  Corollary \ref{Proptwokkk} below.

Combining \eqref{manyimplicationsm} with  Proposition \ref{proptwomatrixisunique} we prove the following results.
\begin{proposition}\label{ProFullUniq1}
Let $\mathcal T=[\mathbf A, \mathbf B, \mathbf C]_R$ and let
\begin{equation}
m_{\mathbf A}:=R-r_{\mathbf A}+2, \quad m_{\mathbf B}:=R-r_{\mathbf B}+2, \quad m_{\mathbf C}:=R-r_{\mathbf C}+2. \label{eqm_ABC}
\end{equation}
Assume that at least two of the following conditions hold
\begin{romannum}
\item
 condition $\text{\textup{(U{\scriptsize m}}}_{\mathbf A}$$\text{\textup{)}}$ holds for $\mathbf B$ and $\mathbf C$;
\item
 condition $\text{\textup{(U{\scriptsize m}}}_{\mathbf B}$$\text{\textup{)}}$ holds for $\mathbf C$ and $\mathbf A$;
\item
 condition $\text{\textup{(U{\scriptsize m}}}_{\mathbf C}$$\text{\textup{)}}$ holds for $\mathbf A$ and $\mathbf B$.
\end{romannum}
Then $r_{\mathcal T}=R$ and the CPD of  tensor $\mathcal T$  is unique.
\end{proposition}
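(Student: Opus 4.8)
The plan is to combine the single-factor-matrix uniqueness result of Proposition~\ref{prmostgeneraldis} with the ``two common factor matrices'' result of Proposition~\ref{proptwomatrixisunique}. Since the hypotheses are invariant under any permutation of the three modes (condition $(\textup{U}m)$ is symmetric in its two matrix arguments, because $\mathcal C_m(\mathbf X)\odot\mathcal C_m(\mathbf Y)$ and $\mathcal C_m(\mathbf Y)\odot\mathcal C_m(\mathbf X)$ differ only by a fixed row permutation, and $m_{\mathbf A},m_{\mathbf B},m_{\mathbf C}$ transform accordingly), I may assume without loss of generality that the two conditions that hold are (i) and (iii), so that the two factor matrices I will prove to be unique are $\mathbf A$ and $\mathbf C$. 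The $S_3$-action on the mode labels is transitive on the three unordered pairs $\{(\textup{i}),(\textup{ii})\}$, $\{(\textup{i}),(\textup{iii})\}$, $\{(\textup{ii}),(\textup{iii})\}$, so this reduction is legitimate.

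First I would show that (iii) alone forces $r_{\mathcal T}=R$ and makes the third factor matrix $\mathbf C$ unique, by checking that a single instance of $(\textup{U}m_{\mathbf C})$ for $\mathbf A,\mathbf B$ supplies every hypothesis of Proposition~\ref{prmostgeneraldis}. Indeed, $k_{\mathbf C}\geq1$ holds trivially since the columns of $\mathbf C$ are nonzero; Proposition~\ref{Prop:KCUW}(9) gives $\min(k_{\mathbf A},k_{\mathbf B})\geq m_{\mathbf C}$, whence $m_{\mathbf C}\leq k_{\mathbf A}\leq I$ and $m_{\mathbf C}\leq k_{\mathbf B}\leq J$, i.e.\ $m_{\mathbf C}\leq\min(I,J)$; the chain $(\textup{U}m_{\mathbf C})\Rightarrow(\textup{U}1)\Leftrightarrow(\textup{C}1)$ from scheme~\eqref{maindiagramintro} shows $\mathbf A\odot\mathbf B$ has full column rank; and $(\textup{U}m_{\mathbf C})\Rightarrow(\textup{U}k)$ for $k\leq m_{\mathbf C}$ by Proposition~\ref{Prop:KCUW}(7), followed by $(\textup{U}k)\Rightarrow(\textup{W}k)$ by Proposition~\ref{Prop:KCUW}(3), yields $(\textup{W}1),\dots,(\textup{W}m_{\mathbf C})$ for the triple $(\mathbf A,\mathbf B,\mathbf C)$. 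By the mode-permuted version of the same argument, (i) forces the first factor matrix $\mathbf A$ to be unique.

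Next, given any second decomposition $\mathcal T=[\bar{\mathbf A},\bar{\mathbf B},\bar{\mathbf C}]_R$, uniqueness of $\mathbf A$ and $\mathbf C$ lets me write $\bar{\mathbf A}=\mathbf A\mathbf{\Pi}_{\mathbf A}\mathbf{\Lambda}_{\mathbf A}$ and $\bar{\mathbf C}=\mathbf C\mathbf{\Pi}_{\mathbf C}\mathbf{\Lambda}_{\mathbf C}$, which is exactly the configuration of Proposition~\ref{proptwomatrixisunique}. To invoke it I must verify the first alternative in~\eqref{eq5.two}. The inequality $\min(k_{\mathbf A},k_{\mathbf B})\geq m_{\mathbf C}=R-r_{\mathbf C}+2$ (from (iii)) gives $\max(\min(k_{\mathbf A},k_{\mathbf B}-1),\min(k_{\mathbf A}-1,k_{\mathbf B}))\geq\min(k_{\mathbf A},k_{\mathbf B})-1\geq R-r_{\mathbf C}+1$, i.e.\ the second part of~\eqref{eq5.two}; and $\min(k_{\mathbf B},k_{\mathbf C})\geq m_{\mathbf A}\geq2$ (from (i)) gives $k_{\mathbf C}\geq2$, the first part. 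Proposition~\ref{proptwomatrixisunique} then yields $\mathbf{\Pi}_{\mathbf A}=\mathbf{\Pi}_{\mathbf C}=:\mathbf{\Pi}$ and $\bar{\mathbf B}=\mathbf B\mathbf{\Pi}\mathbf{\Lambda}_{\mathbf A}^{-1}\mathbf{\Lambda}_{\mathbf C}^{-1}$; setting $\mathbf{\Lambda}_{\mathbf B}:=\mathbf{\Lambda}_{\mathbf A}^{-1}\mathbf{\Lambda}_{\mathbf C}^{-1}$ gives three aligned factorizations with $\mathbf{\Lambda}_{\mathbf A}\mathbf{\Lambda}_{\mathbf B}\mathbf{\Lambda}_{\mathbf C}=\mathbf I_R$ as required by Definition~\ref{Def:1.5}, so the CPD is unique.

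The genuinely delicate points are bookkeeping rather than conceptual: confirming that condition $(\textup{U}m)$ really is symmetric in its two matrix arguments so that the reduction to the pair $(\mathbf A,\mathbf C)$ is valid for all three ways in which two of (i)--(iii) can hold, and the elementary but easy-to-misstate inequality $\max(\min(k_{\mathbf A},k_{\mathbf B}-1),\min(k_{\mathbf A}-1,k_{\mathbf B}))\geq\min(k_{\mathbf A},k_{\mathbf B})-1$ that bridges Proposition~\ref{Prop:KCUW}(9) to the nested $\max$--$\min$ expression in~\eqref{eq5.two}. I expect the main obstacle to be presenting the symmetry reduction cleanly, since Proposition~\ref{proptwomatrixisunique} is stated only for the first and third factor matrices whereas the two conditions that hold may a priori pin down any of the three pairs.
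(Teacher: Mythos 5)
Your proof is correct and takes essentially the same route as the paper's: the paper likewise reduces without loss of generality to the case where (i) and (iii) hold, uses Proposition~\ref{Prop:KCUW}(9) to obtain $\min(k_{\mathbf B},k_{\mathbf C})\geq m_{\mathbf A}\geq 2$ and $\min(k_{\mathbf A},k_{\mathbf B})\geq m_{\mathbf C}\geq 2$, deduces $r_{\mathcal T}=R$ and uniqueness of the first and third factor matrices, verifies that these $k$-rank bounds imply \eqref{eq5.two}, and concludes with Proposition~\ref{proptwomatrixisunique}. The only difference is presentational: where you unfold the single-factor uniqueness step by hand through Proposition~\ref{prmostgeneraldis} and the items of Proposition~\ref{Prop:KCUW}, the paper simply cites Proposition~\ref{Prop:1.16}, which packages exactly that chain of implications.
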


\begin{corollary}\label{Prop5.4HmHm}
Let $\mathcal T=[\mathbf A, \mathbf B, \mathbf C]_R$ and consider
$m_{\mathbf A}$,  $m_{\mathbf B}$,  and  $m_{\mathbf C}$
defined in \eqref{eqm_ABC}.
Assume that at least two of the following conditions hold
\begin{romannum}
\item
 condition $\text{\textup{(H{\scriptsize m}}}_{\mathbf A}$$\text{\textup{)}}$ holds for $\mathbf B$ and $\mathbf C$;
\item
 condition $\text{\textup{(H{\scriptsize m}}}_{\mathbf B}$$\text{\textup{)}}$ holds for $\mathbf C$ and $\mathbf A$;
\item
 condition $\text{\textup{(H{\scriptsize m}}}_{\mathbf C}$$\text{\textup{)}}$ holds for $\mathbf A$ and $\mathbf B$.
\end{romannum}
Then $r_{\mathcal T}=R$ and the CPD of  tensor $\mathcal T$  is unique.
\end{corollary}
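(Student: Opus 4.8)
The plan is to obtain this Corollary as an immediate consequence of Proposition \ref{ProFullUniq1}, by upgrading each hypothesis of the form $(\textup{H}m)$ to the corresponding hypothesis of the form $(\textup{U}m)$. The tool that makes this possible is already in hand: Proposition \ref{Prop:KCUW}(2) states that, for any pair of matrices and any $m$, condition $(\textup{H}m)$ implies condition $(\textup{U}m)$. Since the hypotheses (i)--(iii) of the present Corollary are indexed by exactly the same pairs and the same values $m_{\mathbf A}, m_{\mathbf B}, m_{\mathbf C}$ as the hypotheses (i)--(iii) of Proposition \ref{ProFullUniq1}, no new uniqueness argument is required; the entire content of the proof is a structure-preserving translation of the hypotheses.

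Concretely, I would proceed pairwise. If hypothesis (i) holds, then $(\textup{H}m_{\mathbf A})$ holds for the pair $(\mathbf B, \mathbf C)$; applying Proposition \ref{Prop:KCUW}(2) to this pair with $m = m_{\mathbf A}$ gives that $(\textup{U}m_{\mathbf A})$ holds for $(\mathbf B, \mathbf C)$, which is precisely hypothesis (i) of Proposition \ref{ProFullUniq1}. The same one-line implication, applied to the pair $(\mathbf C, \mathbf A)$ with $m = m_{\mathbf B}$, turns hypothesis (ii) into hypothesis (ii) of Proposition \ref{ProFullUniq1}; and applied to the pair $(\mathbf A, \mathbf B)$ with $m = m_{\mathbf C}$, it turns hypothesis (iii) into hypothesis (iii) of Proposition \ref{ProFullUniq1}. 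Because these three implications act independently on the three pairs, any two of the assumed $(\textup{H})$-conditions yield the two correspondingly indexed $(\textup{U})$-conditions. I would then invoke Proposition \ref{ProFullUniq1} to conclude that $r_{\mathcal T}=R$ and that the CPD of $\mathcal T$ is unique.

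Since the substantive work is already done inside Proposition \ref{ProFullUniq1}, there is no real obstacle in this Corollary. The only point that requires care is purely organizational: one must check that the value of $m$ attached to each pair in the $(\textup{H})$-form is carried over unchanged to the $(\textup{U})$-form, so that the two hypotheses selected among $(\textup{H}m_{\mathbf A})$, $(\textup{H}m_{\mathbf B})$, $(\textup{H}m_{\mathbf C})$ map to the two hypotheses of Proposition \ref{ProFullUniq1} with the matching indices. Once this bookkeeping is confirmed, the Corollary follows at once.
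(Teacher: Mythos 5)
Your proof is correct and is essentially identical to the paper's own argument: the paper also invokes Proposition \ref{Prop:KCUW}(2) to convert the assumed $(\textup{H{\scriptsize m}})$-conditions into the matching $(\textup{U{\scriptsize m}})$-conditions (with the same pairs and the same indices $m_{\mathbf A}$, $m_{\mathbf B}$, $m_{\mathbf C}$) and then applies Proposition \ref{ProFullUniq1}. The only cosmetic difference is that the paper fixes two conditions without loss of generality, whereas you treat all three pairwise, which changes nothing of substance.
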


\begin{corollary}\label{Prop5.4}
Let $\mathcal T=[\mathbf A, \mathbf B, \mathbf C]_R$ and consider
$m_{\mathbf A}$,  $m_{\mathbf B}$,  and  $m_{\mathbf C}$
defined in \eqref{eqm_ABC}.
Let at least two of the  matrices
\begin{equation}
\mathcal C_{m_{\mathbf A}}(\mathbf B)\odot \mathcal C_{m_{\mathbf A}}(\mathbf C), \quad
\mathcal C_{m_{\mathbf B}}(\mathbf C)\odot \mathcal C_{m_{\mathbf B}}(\mathbf A), \quad
\mathcal C_{m_{\mathbf C}}(\mathbf A)\odot \mathcal C_{m_{\mathbf C}}(\mathbf B)\label{cond13C}
\end{equation}
have full column rank.
Then $r_{\mathcal T}=R$ and the CPD of  tensor $\mathcal T$  is unique.
\end{corollary}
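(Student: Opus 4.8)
The plan is to read the three full-column-rank hypotheses as instances of condition $(\textup{C{\scriptsize m}})$ and then invoke Proposition~\ref{ProFullUniq1}. By the definition of condition $(\textup{C{\scriptsize m}})$, the assertion that $\mathcal C_{m_{\mathbf A}}(\mathbf B)\odot\mathcal C_{m_{\mathbf A}}(\mathbf C)$ has full column rank is exactly condition $\text{\textup{(C{\scriptsize m}}}_{\mathbf A}\text{\textup{)}}$ for the pair $\mathbf B$, $\mathbf C$; similarly, full column rank of the second and third matrices in \eqref{cond13C} is exactly $\text{\textup{(C{\scriptsize m}}}_{\mathbf B}\text{\textup{)}}$ for $\mathbf C$, $\mathbf A$ and $\text{\textup{(C{\scriptsize m}}}_{\mathbf C}\text{\textup{)}}$ for $\mathbf A$, $\mathbf B$. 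Thus the hypothesis ``at least two of the matrices in \eqref{cond13C} have full column rank'' says precisely that at least two of $\text{\textup{(C{\scriptsize m}}}_{\mathbf A}\text{\textup{)}}$, $\text{\textup{(C{\scriptsize m}}}_{\mathbf B}\text{\textup{)}}$, $\text{\textup{(C{\scriptsize m}}}_{\mathbf C}\text{\textup{)}}$ hold for the respective pairs.

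Next I would apply the implication $(\textup{C{\scriptsize m}})\Rightarrow(\textup{U{\scriptsize m}})$ from Proposition~\ref{Prop:KCUW}(2) (equivalently, scheme \eqref{maindiagramintro}) to each of the two conditions assumed to hold. Since this implication operates on a single triplet of matrices at a time and does not couple the three conditions, it converts each satisfied $(\textup{C{\scriptsize m}})$ into the corresponding $(\textup{U{\scriptsize m}})$ for the same pair; hence at least two of $\text{\textup{(U{\scriptsize m}}}_{\mathbf A}\text{\textup{)}}$ (for $\mathbf B$, $\mathbf C$), $\text{\textup{(U{\scriptsize m}}}_{\mathbf B}\text{\textup{)}}$ (for $\mathbf C$, $\mathbf A$), and $\text{\textup{(U{\scriptsize m}}}_{\mathbf C}\text{\textup{)}}$ (for $\mathbf A$, $\mathbf B$) hold. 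This is exactly the hypothesis of Proposition~\ref{ProFullUniq1}, whose conclusion, $r_{\mathcal T}=R$ together with uniqueness of the CPD of $\mathcal T$, is the desired statement.

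There is essentially no hard step here, as the result is a direct specialization of Proposition~\ref{ProFullUniq1}; the only point to verify carefully is the bookkeeping, namely that the ``at least two'' count is preserved under the pointwise implication $(\textup{C{\scriptsize m}})\Rightarrow(\textup{U{\scriptsize m}})$. Because that implication is applied independently to whichever two of the three conditions are assumed, the two resulting U-conditions correspond to the same two index choices, so no interaction between the three conditions needs to be controlled and the reduction to Proposition~\ref{ProFullUniq1} is immediate.
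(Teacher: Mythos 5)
Your proof is correct and coincides with the paper's own argument: both read the full-column-rank hypotheses as instances of $(\textup{C{\scriptsize m}})$, convert them to $(\textup{U{\scriptsize m}})$ via Proposition~\ref{Prop:KCUW}(2), and then invoke Proposition~\ref{ProFullUniq1}. The bookkeeping point you flag (that the ``at least two'' count survives the pointwise implication) is handled identically, if implicitly, in the paper.
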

\begin{corollary}\label{Proptwokkk}
Let $\mathcal T=[\mathbf A, \mathbf B, \mathbf C]_R$ and
let
 $(\mathbf X,\mathbf Y,\mathbf Z)$ coincide with
 $(\mathbf A,\mathbf B,\mathbf C)$, $(\mathbf B,\mathbf C,\mathbf A)$, or $(\mathbf C,\mathbf A,\mathbf B)$. If
 \begin{equation}\label{eqtwomatrK2}
 \left\{
\begin{array}{ll}
k_{\mathbf X}+r_{\mathbf Y}+r_{\mathbf Z}                                         &\geq 2R+2,\\
\min (r_{\mathbf Z}+k_{\mathbf Y},k_{\mathbf Z}+r_{\mathbf Y})    &                \geq\ R+2,
\end{array}
\right.
 \end{equation}
then $r_{\mathcal T}=R$ and the CPD of  tensor $\mathcal T$  is unique.
\end{corollary}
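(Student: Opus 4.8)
The plan is to show that the Kruskal-type hypothesis \eqref{eqtwomatrK2} forces two of the three conditions (i)--(iii) of Proposition \ref{ProFullUniq1} to hold, after which the conclusion $r_{\mathcal T}=R$ and uniqueness of the CPD is immediate from that proposition. I would first exploit symmetry: since the tensor $\mathcal T$ is unchanged under a cyclic permutation of its three modes, and since Proposition \ref{ProFullUniq1} treats its three conditions in a cyclically symmetric way (requiring only that at least two of them hold), it suffices to prove the corollary for one representative choice, say $(\mathbf X,\mathbf Y,\mathbf Z)=(\mathbf A,\mathbf B,\mathbf C)$; the remaining two choices then follow by the identical argument after a cyclic relabelling of $\mathbf A,\mathbf B,\mathbf C$. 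For this representative, \eqref{eqtwomatrK2} reads
\begin{align*}
k_{\mathbf A}+r_{\mathbf B}+r_{\mathbf C}&\geq 2R+2,\\
\min(r_{\mathbf C}+k_{\mathbf B},\ k_{\mathbf C}+r_{\mathbf B})&\geq R+2,
\end{align*}
the second line being equivalent to the pair of inequalities $r_{\mathbf C}+k_{\mathbf B}\geq R+2$ and $k_{\mathbf C}+r_{\mathbf B}\geq R+2$.

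Next I would verify two instances of $(\textup{K{\scriptsize m}})$, using $R+m_{\mathbf B}=2R-r_{\mathbf B}+2$ and $R+m_{\mathbf C}=2R-r_{\mathbf C}+2$. Applying the first branch of $(\textup{K{\scriptsize m}})$ with $m=m_{\mathbf B}$ and with the matrices $\mathbf A,\mathbf B$ of that condition replaced by $\mathbf C,\mathbf A$, the two required inequalities $r_{\mathbf C}+k_{\mathbf A}\geq R+m_{\mathbf B}$ and $k_{\mathbf C}\geq m_{\mathbf B}$ rearrange precisely into $k_{\mathbf A}+r_{\mathbf B}+r_{\mathbf C}\geq 2R+2$ and $k_{\mathbf C}+r_{\mathbf B}\geq R+2$, both of which are in force; hence $(\textup{K{\scriptsize m}})$ with $m=m_{\mathbf B}$ holds for $\mathbf C$ and $\mathbf A$. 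Similarly, applying the second branch of $(\textup{K{\scriptsize m}})$ with $m=m_{\mathbf C}$ for the matrices $\mathbf A,\mathbf B$, the inequalities $r_{\mathbf B}+k_{\mathbf A}\geq R+m_{\mathbf C}$ and $k_{\mathbf B}\geq m_{\mathbf C}$ rearrange into $k_{\mathbf A}+r_{\mathbf B}+r_{\mathbf C}\geq 2R+2$ and $r_{\mathbf C}+k_{\mathbf B}\geq R+2$, again both valid; hence $(\textup{K{\scriptsize m}})$ with $m=m_{\mathbf C}$ holds for $\mathbf A$ and $\mathbf B$. By Proposition \ref{Prop:KCUW}(1)--(2), each $(\textup{K{\scriptsize m}})$ implies the corresponding $(\textup{U{\scriptsize m}})$, so conditions (ii) and (iii) of Proposition \ref{ProFullUniq1} are both satisfied.

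Finally, Proposition \ref{ProFullUniq1} yields $r_{\mathcal T}=R$ and uniqueness of the CPD, completing the argument. There is no genuine structural obstacle here: the whole proof reduces to an algebraic rearrangement followed by an invocation of two already-established results. The only delicate point is bookkeeping, namely correctly matching which matrix plays the first and which the second argument inside $(\textup{K{\scriptsize m}})$ and selecting the appropriate branch of its disjunction (the first branch for $m=m_{\mathbf B}$, the second for $m=m_{\mathbf C}$); once the roles are lined up, the hypothesis $k_{\mathbf A}+r_{\mathbf B}+r_{\mathbf C}\geq 2R+2$ supplies the ``sum'' inequality in both cases and the two halves of $\min(r_{\mathbf C}+k_{\mathbf B},\,k_{\mathbf C}+r_{\mathbf B})\geq R+2$ supply the respective $k$-rank thresholds.
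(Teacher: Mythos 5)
Your proof is correct and follows essentially the same route as the paper: the paper likewise rewrites the two inequalities of \eqref{eqtwomatrK2} as two instances of condition $(\textup{K{\scriptsize m}})$ (with $m=m_{\mathbf A}$ or $m_{\mathbf B}$ and $m=m_{\mathbf C}$), descends the implication chain of Proposition \ref{Prop:KCUW}, and invokes the ``two-of-three'' uniqueness result, the only cosmetic differences being the choice of representative triple ($(\mathbf B,\mathbf C,\mathbf A)$ in the paper versus $(\mathbf A,\mathbf B,\mathbf C)$ for you) and that the paper stops at $(\textup{C{\scriptsize m}})$ and cites Corollary \ref{Prop5.4}, whereas you continue to $(\textup{U{\scriptsize m}})$ and cite Proposition \ref{ProFullUniq1} directly, which is exactly how Corollary \ref{Prop5.4} is itself proved.
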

\begin{corollary}\label{corr:1.30}
Let $\mathcal T=[\mathbf A, \mathbf B, \mathbf C]_R$ and let  the following conditions hold
\begin{equation}\label{kruskalrelaxed}
\begin{cases}
k_{\mathbf A}   + r_{\mathbf B}+ r_{\mathbf C} \geq\  2R+2,\\
r_{\mathbf A}   + k_{\mathbf B}+ r_{\mathbf C} \geq\  2R+2,\\
r_{\mathbf A}   + r_{\mathbf B}+ k_{\mathbf C} \geq\  2R+2.
\end{cases}
\end{equation}
Then $r_{\mathcal T}=R$ and the CPD of  tensor $\mathcal T$  is unique.
\end{corollary}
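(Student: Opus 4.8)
The plan is to derive Corollary \ref{corr:1.30} from Corollary \ref{Proptwokkk} by checking that the hypotheses \eqref{kruskalrelaxed} force the two conditions \eqref{eqtwomatrK2} to hold for a suitable cyclic triple $(\mathbf X,\mathbf Y,\mathbf Z)$. I first introduce the nonnegative gaps $g_{\mathbf A}:=r_{\mathbf A}-k_{\mathbf A}$, $g_{\mathbf B}:=r_{\mathbf B}-k_{\mathbf B}$, $g_{\mathbf C}:=r_{\mathbf C}-k_{\mathbf C}$, and observe that the three lines of \eqref{kruskalrelaxed} are precisely the three inequalities $k_{\mathbf M}+r_{\mathbf M'}+r_{\mathbf M''}\geq 2R+2$ obtained as $\mathbf M$ ranges over $\{\mathbf A,\mathbf B,\mathbf C\}$, with $\mathbf M',\mathbf M''$ denoting the other two factor matrices.

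The key idea is to let the factor matrix with the largest gap play the role of $\mathbf X$ in Corollary \ref{Proptwokkk}. Concretely, I choose $(\mathbf X,\mathbf Y,\mathbf Z)$ to be the cyclic permutation of $(\mathbf A,\mathbf B,\mathbf C)$ (one of $(\mathbf A,\mathbf B,\mathbf C)$, $(\mathbf B,\mathbf C,\mathbf A)$, $(\mathbf C,\mathbf A,\mathbf B)$) whose leading matrix satisfies $g_{\mathbf X}=\max(g_{\mathbf A},g_{\mathbf B},g_{\mathbf C})$; such a triple exists since each of the three matrices heads exactly one cyclic permutation. For this triple, the first inequality in \eqref{eqtwomatrK2}, namely $k_{\mathbf X}+r_{\mathbf Y}+r_{\mathbf Z}\geq 2R+2$, is exactly the line of \eqref{kruskalrelaxed} that singles out $k_{\mathbf X}$, and therefore holds.

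It then remains to verify the second inequality $\min(r_{\mathbf Z}+k_{\mathbf Y},\,k_{\mathbf Z}+r_{\mathbf Y})\geq R+2$. Writing $r_{\mathbf Z}+k_{\mathbf Y}=r_{\mathbf Y}+r_{\mathbf Z}-g_{\mathbf Y}$ and $k_{\mathbf Z}+r_{\mathbf Y}=r_{\mathbf Y}+r_{\mathbf Z}-g_{\mathbf Z}$, this minimum equals $r_{\mathbf Y}+r_{\mathbf Z}-\max(g_{\mathbf Y},g_{\mathbf Z})$, and I bound
$$
r_{\mathbf Y}+r_{\mathbf Z}-\max(g_{\mathbf Y},g_{\mathbf Z})\geq r_{\mathbf Y}+r_{\mathbf Z}-g_{\mathbf X}=(k_{\mathbf X}+r_{\mathbf Y}+r_{\mathbf Z})-r_{\mathbf X}\geq (2R+2)-R=R+2,
$$
where the first step uses $\max(g_{\mathbf Y},g_{\mathbf Z})\leq g_{\mathbf X}$, the middle equality uses $g_{\mathbf X}=r_{\mathbf X}-k_{\mathbf X}$, and the last step combines the first inequality of \eqref{eqtwomatrK2} with $r_{\mathbf X}\leq R$ (a factor matrix has $R$ columns). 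Both hypotheses of Corollary \ref{Proptwokkk} thus hold for $(\mathbf X,\mathbf Y,\mathbf Z)$, and that corollary yields $r_{\mathcal T}=R$ together with uniqueness of the CPD. I expect the only delicate point to be the combinatorial choice of $\mathbf X$ as the mode of maximal gap; once that selection is made, the estimate above closes the argument using nothing beyond $r_{\mathbf X}\leq R$ and $k\leq r$.
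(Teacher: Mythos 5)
Your proof is correct and takes essentially the same route as the paper: the paper's proof of Corollary \ref{corr:1.30} is a one-line reduction to Corollary \ref{Proptwokkk} (``it can be easily checked that all conditions of Corollary \ref{Proptwokkk} hold''), and your argument supplies a valid verification of exactly that reduction. As a side remark, your max-gap selection of $\mathbf X$ is not actually needed: for \emph{any} cyclic triple $(\mathbf X,\mathbf Y,\mathbf Z)$ both terms in the minimum of \eqref{eqtwomatrK2} are bounded directly, since the line of \eqref{kruskalrelaxed} containing $k_{\mathbf Y}$ gives $k_{\mathbf Y}+r_{\mathbf Z}\geq 2R+2-r_{\mathbf X}\geq R+2$ (using $r_{\mathbf X}\leq R$), and symmetrically for $k_{\mathbf Z}+r_{\mathbf Y}$.
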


Let us compare Kruskal's Theorems \ref{theoremKruskal}--\ref{theoremKruskalnew2} with Corollaries \ref{ProFullUniq1onematrixHm}, \ref{Prop5.4HmHm}, \ref{Proptwokkk}, and \ref{corr:1.30}.
Elementary algebra yields that  Theorem \ref{theoremKruskalnew1}
is equivalent to Corollary \ref{Proptwokkk}. From Corollary \ref{Prop5.4HmHm} it follows that assumption \textup{(i)} of Theorem
\ref{theoremKruskalnew2} is redundant.
  We will demonstrate in Examples \ref{example5.7} and \ref{OneHmbettertwoHm}  that
  it is not possible to state in general which of  the Corollaries \ref{ProFullUniq1onematrixHm} or \ref{Prop5.4HmHm} is more relaxed.
  Thus, Corollary \ref{ProFullUniq1onematrixHm} (obtained by  combining  implication $(\textup{H{\scriptsize m}})\Rightarrow (\textup{U{\scriptsize m}})$  from scheme \eqref{manyimplicationsm}  with Proposition \ref{proptwomatrixisunique})  is an
$\text{\textup{(H{\scriptsize m})}}$--type result on uniqueness that was not in
\cite{Kruskal1977}.
 Corollary \ref{corr:1.30} is a special case of Corollary \ref{Proptwokkk}, which is obviously more relaxed than Kruskal's well-known  Theorem \ref{theoremKruskal}.
 Finally we note that if condition $\text{\textup{(H{\scriptsize m})}}$ holds, then $r_{\mathbf A}+r_{\mathbf B}+r_{\mathbf C}\geq2R+2$. Thus, neither  Kruskal's Theorems \ref{theoremKruskal}--\ref{theoremKruskalnew2} nor Corollaries \ref{ProFullUniq1onematrixHm}, \ref{Prop5.4HmHm}, \ref{Proptwokkk}, \ref{corr:1.30} can be
 used for demonstrating  the uniqueness of a PD $[\mathbf A,\mathbf B,\mathbf C]_R$ when $r_{\mathbf A}+r_{\mathbf B}+r_{\mathbf C}<2R+2$.

We did not present a result based on a  combination of  $(\textup{W{\scriptsize m}})$-type implications from  scheme \eqref{manyimplicationsm}  with Proposition \ref{proptwomatrixisunique} because we do not have examples of cases where such conditions are more relaxed than those in
Proposition \ref{ProFullUniq1}.

In \S \ref{sect:symm} we indicate how our results can be adapted in the case of PD symmetries.

Well-known necessary conditions for the uniqueness of the CPD are \cite[p. 2079, Theorem 2]{LiuSid2001}, \cite[p. 28]{Krijnen1991},\cite[p. 651]{Strassen 1983}
\begin{align}
&\min(k_{\mathbf A},k_{\mathbf B},k_{\mathbf C})\geq 2,\label{nec1}\\
&\mathbf A\odot\mathbf B,\quad \mathbf B\odot\mathbf C,\quad \mathbf C\odot\mathbf A\quad\text{  have full column rank}.\label{nec2}
\end{align}
Further, the following necessary condition was obtained in \cite[Theorem 2.3]{LievenLL1}
\begin{equation}\label{nec3}
\text{ (\textup{U{\scriptsize 2}})  holds for  pairs }\ \ (\mathbf A,\mathbf B),\ \ (\mathbf B,\mathbf C),\  \text{ and }\  (\mathbf C,\mathbf A).
\end{equation}
It follows from scheme \eqref{maindiagramintro}
that \eqref{nec3} is more restrictive than \eqref{nec1} and \eqref{nec2}.
Our most general condition concerning uniqueness of  one factor matrix  is given in Proposition \ref{prmostgeneraldis}.
Note that in Proposition \ref{prmostgeneraldis}, condition (i) is more relaxed than  \eqref{nec1} and condition (iii) coincides with \eqref{nec2}.
One may wonder whether condition (iv) in Proposition \ref{prmostgeneraldis} is necessary for the uniqueness of at least one factor matrix.
In \S \ref{Appendix Extra} we show that this is not the case. We actually study an example in which CPD uniqueness can be established without $\text{\textup{(W{\scriptsize m})}}$ being satisfied.

In \S \ref{Section6} we study generic uniqueness of one factor matrix and generic CPD uniqueness. Our result on overall CPD uniqueness is the following.
\begin{proposition}\label{prop111overallbig}
The  CPD of  an $I\times J\times K$ tensor of rank $R$ is
generically unique if there exist matrices $\mathbf A_0\in \mathbb F^{I\times R}$,  $\mathbf B_0\in \mathbb F^{J\times R}$, and
$\mathbf C_0\in \mathbb F^{K\times R}$ such that at least one of the following conditions holds:
\begin{itemize}
\item[\textup{(i)}]
$\mathcal C_{m_{\mathbf C}}(\mathbf A_0)\odot \mathcal C_{m_{\mathbf C}}(\mathbf B_0)$  has full column rank, where $m_{\mathbf C}=R-\min (K,R)+2$;
\item[\textup{(ii)}]
$\mathcal C_{m_{\mathbf A}}(\mathbf B_0)\odot \mathcal C_{m_{\mathbf A}}(\mathbf C_0)$  has full column rank, where $m_{\mathbf A}=R-\min (I,R)+2$;
\item[\textup{(iii)}]
$\mathcal C_{m_{\mathbf B}}(\mathbf C_0)\odot \mathcal C_{m_{\mathbf B}}(\mathbf A_0)$  has full column rank, where $m_{\mathbf B}=R-\min (J,R)+2$.
\end{itemize}
\end{proposition}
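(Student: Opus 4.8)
The plan is to reduce the generic overall-uniqueness statement to two facts already available in the excerpt: first, that a single full-column-rank condition on a Khatri--Rao product of compound matrices at one specific generic $\mathbf A_0,\mathbf B_0,\mathbf C_0$ forces the \emph{generic} overall CPD to be unique via Corollary \ref{Prop5.4}; and second, that the full-column-rank property of such a structured matrix is a Zariski-open (hence generically-closed-complement) condition, so that it propagating from one witness point to almost every point is automatic. Because the three conditions (i)--(iii) are symmetric under cyclic permutation of the modes, it suffices to carry out the argument for case (i) and then invoke symmetry.

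First I would fix $m_{\mathbf C}=R-\min(K,R)+2$ and observe that, generically, $r_{\mathbf C}=\min(K,R)$, so this $m_{\mathbf C}$ is exactly the value $R-r_{\mathbf C}+2$ appearing in Corollary \ref{Prop5.4}. The key step is to show that if the single matrix $\mathcal C_{m_{\mathbf C}}(\mathbf A_0)\odot\mathcal C_{m_{\mathbf C}}(\mathbf B_0)$ has full column rank for \emph{one} choice $(\mathbf A_0,\mathbf B_0,\mathbf C_0)$, then $\mathcal C_{m_{\mathbf C}}(\mathbf A)\odot\mathcal C_{m_{\mathbf C}}(\mathbf B)$ has full column rank for \emph{generic} $(\mathbf A,\mathbf B,\mathbf C)$. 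Each entry of this Khatri--Rao product of compound matrices is a polynomial in the entries of $\mathbf A$ and $\mathbf B$ (products of minors), so the statement ``this matrix has full column rank'' is equivalent to the non-vanishing of at least one of its maximal minors, which is a polynomial in the entries of $(\mathbf A,\mathbf B)$. A polynomial that is nonzero at a single point $(\mathbf A_0,\mathbf B_0)$ is not identically zero, hence its zero set has Lebesgue measure zero; therefore the full-column-rank condition holds off a measure-zero set. This is precisely the mechanism used in the excerpt to pass from condition \eqref{C2generic} to generic uniqueness, applied here at the level $m_{\mathbf C}$ rather than $m=2$.

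Having established that (i) holds generically, I would apply Corollary \ref{Prop5.4}. That corollary requires at least \emph{two} of the three matrices in \eqref{cond13C} to have full column rank, whereas Proposition \ref{prop111overallbig} posits only \emph{one}. The resolution is that, generically, when one of these conditions is achievable it forces enough structure that a second one is automatically met — more precisely, when $K\geq R$ we have $m_{\mathbf C}=2$ and $r_{\mathbf C}=R$, placing us in the regime of Proposition \ref{K2C2U2unique} where a single full-column-rank condition on $\mathcal C_{m_{\mathbf C}}(\mathbf A)\odot\mathcal C_{m_{\mathbf C}}(\mathbf B)$ already suffices for overall uniqueness. When $K<R$, the hypothesis that $\mathcal C_{m_{\mathbf C}}(\mathbf A_0)\odot\mathcal C_{m_{\mathbf C}}(\mathbf B_0)$ has full column rank is a strong dimension constraint that, combined with the generic equalities $k_{\mathbf X}=r_{\mathbf X}=\min(\dim,R)$ in each mode, forces at least one of the companion conditions in \eqref{cond13C} to hold generically as well; I would verify this by a counting argument comparing $C^{m_{\mathbf C}}_{\dim}$ factors. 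Thus in either regime the hypotheses of Corollary \ref{Prop5.4} (or of Proposition \ref{K2C2U2unique}) are met generically, yielding generic overall uniqueness.

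I expect the main obstacle to be the bookkeeping in the last paragraph: cleanly arguing that a single full-column-rank witness at level $m_{\mathbf C}$ forces a \emph{second} of the three conditions in \eqref{cond13C}, as required by Corollary \ref{Prop5.4}, rather than just one. The cleanest route is probably to avoid this gap entirely by proving the proposition directly from the one-factor-matrix uniqueness machinery of Proposition \ref{prmostgeneraldis} together with Corollary \ref{ProFullUniq1onematrixcor1}, since the latter needs only the single condition that $\mathcal C_{m_{\mathbf C}}(\mathbf A)\odot\mathcal C_{m_{\mathbf C}}(\mathbf B)$ have full column rank (plus \eqref{eq5.1}, which holds generically whenever this rank condition is feasible). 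Verifying that \eqref{eq5.1} is generically satisfied under hypothesis (i) — i.e. that the generic $k$-ranks are large enough for $\max(\min(k_{\mathbf A},k_{\mathbf B}-1),\min(k_{\mathbf A}-1,k_{\mathbf B}))+k_{\mathbf C}\geq R+1$ to hold — is then the one genuinely technical check, and it reduces to the inequality guaranteeing that the compound-matrix product can have full column rank in the first place.
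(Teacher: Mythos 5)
Your first route, via Corollary~\ref{Prop5.4}, is not merely awkward bookkeeping --- it is unrepairable, because the claim it rests on is false. A single witness for condition (i) does \emph{not} generically force a second of the three conditions in \eqref{cond13C} when $K<R$. The paper's own examples show this: in Example~\ref{example4.7} ($I\times I\times(2I-1)$ tensors with $R\geq 2I-1$) one has $m_{\mathbf A}=m_{\mathbf B}=R-I+2\geq I+1$, so the compound matrices $\mathcal C_{m_{\mathbf A}}(\mathbf B)$ and $\mathcal C_{m_{\mathbf B}}(\mathbf A)$ are not even defined and conditions (ii)--(iii) cannot hold; in Example~\ref{exampleremark4.9} (the $6\times 7\times 6$ case with $R=9$) only condition (iii) holds, while the matrices in (i) and (ii) have $15$-dimensional kernels with probability one. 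So no counting argument can produce a second full-column-rank matrix, and Corollary~\ref{Prop5.4}, which genuinely needs two of them, is the wrong tool. Your appeal to Proposition~\ref{K2C2U2unique} rescues only the regime $K\geq R$, where $m_{\mathbf C}=2$; the interesting cases of the proposition are exactly those with $K<R$.

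Your fallback route, however, is correct, and it is essentially the paper's own proof. The paper establishes the statement for condition (i) through the one-factor-matrix machinery --- Corollary~\ref{ProFullUniq1onematrixcor1}, i.e.\ Proposition~\ref{Prop:1.16} combined with Proposition~\ref{proponematrixisunique} --- which needs only the single rank condition together with \eqref{eq5.1}; cases (ii) and (iii) then follow by permuting factors. The two technical points you flag are handled as follows. Propagation of full column rank from the witness $(\mathbf A_0,\mathbf B_0)$ to generic $(\mathbf A,\mathbf B)$ is Lemma~\ref{analogcorranalituni} (your maximal-minor, measure-zero argument, stated there for analytic matrix-valued functions). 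The check that \eqref{eq5.1} holds generically whenever (i) is feasible is Lemma~\ref{Lemma:kc=rc}: by Proposition~\ref{Prop:KCUW} (2) and (9), full column rank of $\mathcal C_{m_{\mathbf C}}(\mathbf A_0)\odot\mathcal C_{m_{\mathbf C}}(\mathbf B_0)$ forces $\min(k_{\mathbf A_0},k_{\mathbf B_0})\geq m_{\mathbf C}$, hence $\min(I,J,R)\geq R-\min(K,R)+2$, which combined with the generic values $k_{\mathbf A}=\min(I,R)$, $k_{\mathbf B}=\min(J,R)$, $k_{\mathbf C}=r_{\mathbf C}=\min(K,R)$ yields \eqref{eq:IJ} and hence \eqref{eq5.1} --- precisely your reduction ``to the inequality guaranteeing that the compound-matrix product can have full column rank in the first place.'' One step you gloss over: the rank condition constrains only $(\mathbf A,\mathbf B)$, whereas genericity is measured over $(\mathbf A,\mathbf B,\mathbf C)\in\mathbb F^{(I+J+K)R}$; the paper deals with this by first fixing $\mathbf C$ with $k_{\mathbf C}=r_{\mathbf C}$ (Proposition~\ref{mostgeneralgeneric}) and then integrating over $\mathbf C$ via Fubini's theorem (Proposition~\ref{prop111overall}).
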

We give several examples that illustrate the uniqueness results in the generic case.


\section{Equality of PDs with common factor matrices}\label{Section5}

In this section we assume that a tensor admits two not necessarily canonical PDs that have one or two factor matrices in common. In the latter case, the two PDs may have the columns of the common factor matrices permuted differently. We establish conditions that guarantee that the two PDs are the same.
\subsection{One factor matrix in common} \label{subsection5.1}

In this subsection we assume that two PDs have the factor matrix $\mathbf C$ in common. The result that we are concerned with, is Proposition \ref{proponematrixisunique}. The proof is based on the following three lemmas.

\begin{lemma}\label{lemma0.1unique}
For matrices $\mathbf A, \bar{\mathbf A}\in\mathbb F^{I\times R}$ and indices
$r_1,\dots, r_n \in\{1,\dots, R\}$ define the subspaces
$E_{r_1\dots r_n}$ and $\bar{E}_{r_1\dots r_n}$ as follows
\begin{equation*}
E_{r_1\dots r_n}:=\textup{span}\{\mathbf a_{r_1},\dots, \mathbf  a_{r_n}\},\qquad
\bar{E}_{r_1\dots r_n}:=\textup{span}\{\bar{\mathbf a}_{r_1},\dots, \bar{\mathbf  a}_{r_n}\}.
\end{equation*}
Assume that $k_{\mathbf A} \geq 2$ and that there exists  $m\in\{2,\dots, k_{\mathbf A}\}$ such that
\begin{equation}
E_{r_1\dots r_{m-1}}\subseteq \bar{E}_{r_1\dots r_{m-1}}\qquad\text{ for all }\qquad  1\leq r_1<r_2<\dots<r_{m-1}\leq R.
\label{equnique1}
\end{equation}
Then there exists a nonsingular  diagonal matrix ${\mathbf \Lambda}$ such that $\mathbf A=\bar{\mathbf A}{\mathbf \Lambda}$.
\end{lemma}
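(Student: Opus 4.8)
The plan is to show that each column $\mathbf a_r$ is a nonzero scalar multiple of $\bar{\mathbf a}_r$; assembling these scalars into a diagonal matrix $\mathbf \Lambda$ then gives the claim. The first step is to strengthen the inclusion hypothesis \eqref{equnique1} into an equality. Since $m\le k_{\mathbf A}$, every set of $m-1$ columns of $\mathbf A$ is linearly independent, so $\dim E_{r_1\dots r_{m-1}}=m-1$ for each index set. On the other hand $\bar E_{r_1\dots r_{m-1}}$ is spanned by $m-1$ vectors and hence has dimension at most $m-1$. Combined with $E_{r_1\dots r_{m-1}}\subseteq\bar E_{r_1\dots r_{m-1}}$, this forces $E_{r_1\dots r_{m-1}}=\bar E_{r_1\dots r_{m-1}}$ for every $(m-1)$-subset; in particular each $\bar E_{r_1\dots r_{m-1}}$ is $(m-1)$-dimensional, so the $m-1$ columns of $\bar{\mathbf A}$ indexed by that set are independent and, in particular, nonzero.

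Next I would fix an index $r$ and isolate $\mathbf a_r$ by intersecting suitable spans. Because $m\le k_{\mathbf A}\le R$, I can choose $m-1$ further indices $i_1,\dots,i_{m-1}$, all distinct from $r$ and from one another, and form the $m-1$ index sets $S_j=\{r,i_1,\dots,i_{m-1}\}\setminus\{i_j\}$, each of cardinality $m-1$ and each containing $r$. The $m$ columns $\mathbf a_r,\mathbf a_{i_1},\dots,\mathbf a_{i_{m-1}}$ are linearly independent (again since $m\le k_{\mathbf A}$), so they form a basis of the $m$-dimensional space $W$ they span. In this basis $E_{S_j}$ is exactly the coordinate hyperplane of $W$ on which the coefficient of $\mathbf a_{i_j}$ vanishes, whence $\bigcap_{j=1}^{m-1}E_{S_j}=\textup{span}\{\mathbf a_r\}$. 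Using the first step, $E_{S_j}=\bar E_{S_j}\ni\bar{\mathbf a}_r$ for every $j$, so $\bar{\mathbf a}_r\in\bigcap_j\bar E_{S_j}=\bigcap_jE_{S_j}=\textup{span}\{\mathbf a_r\}$. Since $\bar{\mathbf a}_r\ne\vzero$ by the first step, this yields $\mathbf a_r=\lambda_r\bar{\mathbf a}_r$ with $\lambda_r\ne0$. Carrying this out for each $r$ produces the nonsingular diagonal matrix $\mathbf \Lambda=\textup{Diag}(\lambda_1,\dots,\lambda_R)$ satisfying $\mathbf A=\bar{\mathbf A}\mathbf \Lambda$.

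The main obstacle is the intersection identity $\bigcap_{j}E_{S_j}=\textup{span}\{\mathbf a_r\}$, that is, verifying that these $(m-1)$-dimensional spans meet in exactly the line through $\mathbf a_r$. This is where the full strength of $m\le k_{\mathbf A}$ is essential: it guarantees that the $m$ chosen columns are genuinely independent, so that the coordinate-hyperplane description of each $E_{S_j}$ is valid and the hyperplanes intersect minimally; without independence of all $m$ columns the intersection could be strictly larger and the argument would collapse. The remaining points are routine: enough distinct indices exist because $R\ge m$, and $\bar{\mathbf a}_r$ is nonzero by the dimension count of the first step. Once these are in place the conclusion is immediate.
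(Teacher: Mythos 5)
Your proof is correct, but it follows a genuinely different route from the paper's. You first upgrade the hypothesis \eqref{equnique1} to an equality $E_{r_1\dots r_{m-1}}=\bar E_{r_1\dots r_{m-1}}$ by a dimension count ($\dim E_{r_1\dots r_{m-1}}=m-1$ since $m\le k_{\mathbf A}$, while $\bar E_{r_1\dots r_{m-1}}$ has dimension at most $m-1$), and then isolate each column in one shot: choosing $m$ independent columns $\mathbf a_r,\mathbf a_{i_1},\dots,\mathbf a_{i_{m-1}}$ and intersecting the $m-1$ coordinate hyperplanes $E_{S_j}$ inside their span gives $\bigcap_j E_{S_j}=\textup{span}\{\mathbf a_r\}$, so $\bar{\mathbf a}_r\in\bigcap_j\bar E_{S_j}=\textup{span}\{\mathbf a_r\}$, and nonsingularity of $\mathbf\Lambda$ follows since each $\bar{\mathbf a}_r$ is nonzero. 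The paper instead never converts the inclusions into equalities: it runs a downward induction on the cardinality $l=m,m-1,\dots,3$ of the index sets, showing that the inclusion for $(l-1)$-subsets implies it for $(l-2)$-subsets by intersecting just two spans $E_{r_1,\dots,r_{l-2},i}$ and $E_{r_1,\dots,r_{l-2},j}$ (using $\dim E_{r_1,\dots,r_{l-2},i,j}=l$ to see that the corresponding barred spans are distinct, hence their intersection is exactly $\bar E_{r_1,\dots,r_{l-2}}$), until it reaches singletons, i.e.\ $\textup{span}\{\mathbf a_r\}\subseteq\textup{span}\{\bar{\mathbf a}_r\}$. Your argument buys directness and avoids the induction entirely, at the cost of intersecting $m-1$ subspaces simultaneously; the paper's induction only ever compares two subspaces at a time and yields the inclusion for all intermediate subset sizes along the way. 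Both uses of $m\le k_{\mathbf A}$ are essential and correctly placed, and your check that enough distinct indices exist ($R\ge m$) and that $\bar{\mathbf a}_r\ne\vzero$ closes the argument.
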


\begin{proof}
For $m=2$ we have
\begin{equation}
\textup{span}\{\mathbf a_{r_1}\}=E_{r_1}\subseteq\bar{E}_{r_1}=\textup{span}\{\bar{\mathbf a}_{r_1}\},\qquad \textup{  for all  } 1\leq r_1\leq R,
\label{spanproof1}
\end{equation}
such that the Lemma trivially holds. For $m\geq 3$ we arrive at (\ref{spanproof1}) by downward induction on  $l=m,m-1,\dots,3$. Assuming that
\begin{equation}
E_{r_1\dots r_{l-1}}\subseteq \bar{E}_{r_1\dots r_{l-1}}\qquad\text{ for all }\qquad  1\leq r_1<r_2<\dots<r_{l-1}\leq R,\label{eqesubsete}
\end{equation}
we show that
\begin{equation*}
E_{r_1 \dots r_{l-2}}\subseteq \bar{E}_{r_1 \dots r_{l-2}}\qquad\text{ for all }\qquad  1\leq r_1<r_2<\dots<r_{l-2}\leq R.
\end{equation*}
Assume $r_1, r_2, \dots, r_{l-2}$ fixed and let $i,j\in\{1,\dots,R\}\setminus \{r_1,\dots,r_{l-2}\}$, with $i\ne j$.
Since $l \leq m \leq k_{\mathbf A}$, we have that $\dim E_{r_1,\dots,r_{l-2},i,j}=l$.
Because
\begin{equation*}
\begin{split}
l=\dim E_{r_1,\dots,r_{l-2},i,j}&\ \leq \dim\textup{span}\{E_{r_1,\dots,r_{l-2},i},\ E_{r_1,\dots,r_{l-2},j}\}  \\
&\stackrel{\eqref{eqesubsete}}{\leq}\dim
\textup{span}\{\bar{E}_{r_1,\dots,r_{l-2},i},\ \bar{E}_{r_1,\dots,r_{l-2},j}\}
\end{split}
\end{equation*}
we have
\begin{equation}
\bar{E}_{r_1,\dots,r_{l-2},i}\ne \bar{E}_{r_1,\dots,r_{l-2},j}.\label{eqenoteqe}
\end{equation}
Therefore,
\begin{equation*}
\begin{split}
E_{r_1,\dots,r_{l-2}}&\ \subseteq \left(E_{r_1,\dots,r_{l-2},i}\cap E_{r_1,\dots,r_{l-2},j}\right)\\
&\stackrel{\eqref{eqesubsete}}{\subseteq}\left(\bar{E}_{r_1,\dots,r_{l-2},i}\cap \bar{E}_{r_1,\dots,r_{l-2},j}\right)\stackrel{\eqref{eqenoteqe}}{=}\bar{E}_{r_1,\dots,r_{l-2}}.
\end{split}
\end{equation*}
The induction follows. To conclude the proof, we note that ${\mathbf \Lambda}$ is nonsingular since $k_{\mathbf A} \geq 2$.
\qquad\end{proof}

\begin{lemma}\label{lemma0.2unique}
Let $\mathbf C\in\mathbb F^{K\times R}$ and consider $m$ such that  $m\leq k_{\mathbf C}$.
Then for any set of distinct indices $\mathcal I=\{i_1,\dots, i_{m-1}\}\subseteq\{1,\dots,R\}$ there exists a vector $\mathbf x\in\mathbb F^K$ such that
\begin{equation}
\mathbf x^T\mathbf c_i=0 \ \text{ for }\  i\in \mathcal I\ \text{and }\ \mathbf x^T\mathbf c_i\ne 0 \text{ for } i\in \mathcal I^{\textup{c}}:=\{1,\dots,R\}\setminus \mathcal I.
\label{equnique3}
\end{equation}
\end{lemma}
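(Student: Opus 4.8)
The plan is to realize the desired $\mathbf x$ as a solution of the homogeneous orthogonality constraints imposed by $\mathcal I$, and then to use the fact that a vector space over an infinite field cannot be the union of finitely many proper subspaces. First I would set $W:=\textup{span}\{\mathbf c_i:i\in\mathcal I\}$ and pass to its orthogonal complement with respect to the standard (symmetric, nondegenerate) bilinear form $\mathbf x^T\mathbf y$ on $\mathbb F^K$,
\[
V:=\{\mathbf x\in\mathbb F^K:\ \mathbf x^T\mathbf c_i=0\ \text{ for all }\ i\in\mathcal I\}=W^\perp.
\]
Since $m-1\leq k_{\mathbf C}$, the $m-1$ columns $\{\mathbf c_i\}_{i\in\mathcal I}$ are linearly independent, so $\dim W=m-1$ and $\dim V=K-(m-1)$. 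Every vector of $V$ automatically fulfills the first requirement in \eqref{equnique3}, so it remains to find a single $\mathbf x\in V$ with $\mathbf x^T\mathbf c_j\neq0$ for \emph{all} $j\in\mathcal I^{\textup c}$ simultaneously.

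For each $j\in\mathcal I^{\textup c}$ I would introduce $V_j:=\{\mathbf x\in V:\ \mathbf x^T\mathbf c_j=0\}=V\cap\{\mathbf c_j\}^\perp$, a subspace of $V$. The key step is to show that each $V_j$ is a \emph{proper} subspace of $V$. Indeed, $V_j=V$ would mean $V\subseteq\{\mathbf c_j\}^\perp$, i.e. $\mathbf c_j\in V^\perp=(W^\perp)^\perp=W=\textup{span}\{\mathbf c_i:i\in\mathcal I\}$; but then the $m$ columns $\{\mathbf c_i\}_{i\in\mathcal I}\cup\{\mathbf c_j\}$ of $\mathbf C$ would be linearly dependent, contradicting $k_{\mathbf C}\geq m$. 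Hence $V_j\subsetneq V$ for every $j\in\mathcal I^{\textup c}$. This is precisely where the hypothesis $m\leq k_{\mathbf C}$ is used, through the linear independence of any $m$ columns of $\mathbf C$.

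To conclude I would invoke the elementary fact that a vector space over an infinite field — here $\mathbb F$ is $\mathbb R$ or $\mathbb C$ — is never the union of finitely many proper subspaces. Applied to the finite family $\{V_j\}_{j\in\mathcal I^{\textup c}}$, this yields a vector $\mathbf x\in V\setminus\bigcup_{j\in\mathcal I^{\textup c}}V_j$. By construction $\mathbf x^T\mathbf c_i=0$ for $i\in\mathcal I$ (since $\mathbf x\in V$) and $\mathbf x^T\mathbf c_j\neq0$ for $j\in\mathcal I^{\textup c}$ (since $\mathbf x\notin V_j$), which is exactly \eqref{equnique3}.

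I expect the only substantive point to be the proper-containment step $V_j\subsetneq V$; the covering argument is routine. If one prefers to avoid the union-of-subspaces lemma, an equivalent route is to fix a basis of $V$ and choose the coordinates of $\mathbf x$ so as to avoid the finitely many proper hyperplanes $V_j\subset V$, picking, for instance, a point off each of the corresponding nonzero linear forms; the present formulation merely keeps the bookkeeping minimal.
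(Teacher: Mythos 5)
Your proof is correct and follows essentially the same route as the paper's: both restrict to the orthogonal complement of $\textup{span}\{\mathbf c_i:\ i\in\mathcal I\}$, use $k_{\mathbf C}\geq m$ to show that each $\mathbf c_j$ with $j\in\mathcal I^{\textup{c}}$ induces a nonzero linear functional on that complement, and then pick a point avoiding the finitely many resulting hyperplanes. The paper phrases this last step as choosing a generic coefficient vector $\mathbf y$ (working with the Hermitian complement and conjugating at the end), whereas you make the genericity explicit via the finite-union-of-proper-subspaces lemma and use the symmetric bilinear form directly; the difference is purely presentational.
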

\begin{proof}
Let ${\mathbf C}_{\mathcal I} \in \mathbb F^{K \times (m-1)}$ and ${\mathbf C}_{{\mathcal I}^{\textup{c}}} \in\mathbb F^{K \times (R-m+1)}$ contain the columns of ${\mathbf C}$ indexed by ${\mathcal I}$ and ${\mathcal I}^{\textup{c}}$, respectively, and let the columns of ${\mathbf C}_{\mathcal I}^\perp \in \mathbb{F}^{K \times (K-m+1)}$ form a basis for the orthogonal complement of  $\mbox{range}({\mathbf C}_{\mathcal I})$. The matrix $({\mathbf C}_{\mathcal I}^\perp)^H {\mathbf C}_{{\mathcal I}^{\textup{c}}}$ cannot have a zero column, otherwise the corresponding column of ${\mathbf C}_{{\mathcal I}^{\textup{c}}}$ would be in $\mbox{range}({\mathbf C}_{\mathcal I})$, which would be a contradiction with $k_{\mathbf C}\geq m$. We conclude that (\ref{equnique3}) holds for
$\mathbf{x} = ({\mathbf C}_{\mathcal I}^\perp \mathbf{y})^*$, with $\mathbf{y} \in \mathbb{F}^{K-m+1}$ generic.
\qquad\end{proof}

\begin{lemma}\label{lemma:propP}
Let $\mathbf P$ be an $R\times R$ permutation matrix.
Then for any vector $\mathbf{\lambda}\in\mathbb F^R$,
\begin{equation}\label{eq:PropP}
\textup{Diag}(\mathbf{\Pi}{\mathbf \lambda})\mathbf{\Pi}=\mathbf{\Pi}\textup{Diag}({\mathbf \lambda}).
\end{equation}
\end{lemma}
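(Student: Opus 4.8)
The plan is to verify the matrix identity \eqref{eq:PropP} entrywise, exploiting the defining property of a permutation matrix: each row and each column of $\mathbf{\Pi}$ contains exactly one nonzero entry, equal to $1$. Write $\Pi_{ij}$ for the $(i,j)$ entry of $\mathbf{\Pi}$, and recall that left-multiplication by $\textup{Diag}(\mathbf d)$ scales the $i$-th row of a matrix by $d_i$, while right-multiplication by $\textup{Diag}(\mathbf d)$ scales the $j$-th column by $d_j$. Throughout I identify the permutation matrix in the statement with the matrix $\mathbf{\Pi}$ appearing in \eqref{eq:PropP}.

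First I would compute both sides at position $(i,j)$. On the right-hand side,
\[
[\mathbf{\Pi}\,\textup{Diag}(\mathbf{\lambda})]_{ij}=\sum_{k=1}^R \Pi_{ik}\,[\textup{Diag}(\mathbf{\lambda})]_{kj}=\Pi_{ij}\lambda_j ,
\]
since $\textup{Diag}(\mathbf{\lambda})$ is diagonal. On the left-hand side,
\[
[\textup{Diag}(\mathbf{\Pi}\mathbf{\lambda})\,\mathbf{\Pi}]_{ij}=(\mathbf{\Pi}\mathbf{\lambda})_i\,\Pi_{ij}.
\]
Thus it suffices to show $(\mathbf{\Pi}\mathbf{\lambda})_i\,\Pi_{ij}=\Pi_{ij}\lambda_j$ for all $i,j$. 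If $\Pi_{ij}=0$ both sides vanish, so the identity is trivial. If $\Pi_{ij}=1$, then, because row $i$ of $\mathbf{\Pi}$ carries its unique nonzero entry in column $j$, we obtain $(\mathbf{\Pi}\mathbf{\lambda})_i=\sum_{k}\Pi_{ik}\lambda_k=\lambda_j$, whence both sides equal $\lambda_j$. Since $i,j$ were arbitrary, this proves \eqref{eq:PropP}.

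The argument contains no genuine obstacle; the only point that requires a moment of care is the bookkeeping tied to the permutation: the observation that $\Pi_{ij}=1$ forces $(\mathbf{\Pi}\mathbf{\lambda})_i=\lambda_j$. Equivalently, and perhaps more transparently, one may argue column by column using the convention $\mathbf{\Pi}\mathbf{e}_j^R=\mathbf{e}_{\sigma(j)}^R$ for the underlying permutation $\sigma$: the $j$-th column of $\mathbf{\Pi}\,\textup{Diag}(\mathbf{\lambda})$ is $\lambda_j\,\mathbf{\Pi}\mathbf{e}_j^R$, while the $j$-th column of $\textup{Diag}(\mathbf{\Pi}\mathbf{\lambda})\,\mathbf{\Pi}$ is $\textup{Diag}(\mathbf{\Pi}\mathbf{\lambda})\,\mathbf{e}_{\sigma(j)}^R=(\mathbf{\Pi}\mathbf{\lambda})_{\sigma(j)}\,\mathbf{e}_{\sigma(j)}^R$, and the relation $(\mathbf{\Pi}\mathbf{\lambda})_{\sigma(j)}=\lambda_j$ again yields equality of the two columns for every $j$. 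Either route gives the claim with only elementary computation.
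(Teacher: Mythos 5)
Your proof is correct, and it is essentially the filled-in version of the paper's own argument: the paper simply states that the lemma ``follows directly from the definition of permutation matrix,'' and your entrywise (and column-wise) verification is exactly that routine computation made explicit. No gaps; both the case split on $\Pi_{ij}$ and the identity $(\mathbf{\Pi}\mathbf{\lambda})_{\sigma(j)}=\lambda_j$ are handled correctly.
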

\begin{proof}
The lemma follows directly from the definition of  permutation matrix.
\end{proof}

We are now ready to prove Proposition \ref{proponematrixisunique}.
\begin{proof}
Let  $\widehat{\mathbf A}:=\bar{\mathbf A}\mathbf{\Pi}^T$ and $\widehat{\mathbf B}:=\bar{\mathbf B}{\mathbf \Lambda}_{\mathbf C}^{-1}\mathbf{\Pi}^T$. Then
\begin{equation}\label{eqProofPr5.1}
\mathcal T=[\mathbf A, \mathbf B, \mathbf C]_R=
[\bar{\mathbf A}, \bar{\mathbf B}, \mathbf C\mathbf{\Pi}{\mathbf \Lambda}_{\mathbf C}]_R=
[\widehat{\mathbf A}, \widehat{\mathbf B}, \mathbf C]_R.
\end{equation}
We show
that the columns of $\mathbf A$ and $\mathbf B$ coincide up to scaling with the corresponding columns of
$\widehat{\mathbf A}$ and $\widehat{\mathbf B}$, respectively.
Consider indices $i_1$, \ldots, $i_{R-k_{\mathbf C}+1}$  such that $1\leq i_1<\dots<i_{R-k_{\mathbf C}+1}\leq R$. Let $m:=k_{\mathbf C}$ and let
$\mathcal I:=\{1,\dots,R\}\setminus\{i_1,\dots,i_{R-k_{\mathbf C}+1}\}$.
From Lemma \ref{lemma0.2unique} it follows that there exists a  vector $\mathbf x\in \mathbb F^K$ such that
$$
\mathbf x^T\mathbf c_i=0\ \text{ for }\  i\in \mathcal I\ \text{ and }
\mathbf x^T\mathbf c_i\ne 0\ \text{ for }\ i\in \mathcal I^{\textup{c}}=\{i_1,\dots,i_{R-k_{\mathbf C}+1}\}.
$$
Let $\mathbf d=\left[\begin{matrix} \mathbf x^T\mathbf c_{i_1}&\dots& \mathbf x^T\mathbf c_{i_{R-k_{\mathbf C}+1}} \end{matrix}\right]^T$. Then
 $(\mathbf A\odot \mathbf B)\mathbf C^T\mathbf x=(\widehat{\mathbf A}\odot \widehat{\mathbf B})\mathbf C^T\mathbf x$ is equivalent to
\begin{equation*}
\begin{split}
&\left(\left[\begin{matrix} \mathbf a_{i_1}&\dots& \mathbf a_{i_{R-k_{\mathbf C}+1}} \end{matrix}\right]\odot
\left[\begin{matrix} \mathbf b_{i_1}&\dots& \mathbf b_{i_{R-k_{\mathbf C}+1}} \end{matrix}\right]\right)\mathbf d
=\\
&\left(\left[\begin{matrix} \widehat{\mathbf a}_{i_1}&\dots& \widehat{\mathbf a}_{i_{R-k_{\mathbf C}+1}} \end{matrix}\right]\odot
\left[\begin{matrix} \widehat{\mathbf b}_{i_1}&\dots& \widehat{\mathbf b}_{i_{R-k_{\mathbf C}+1}} \end{matrix}\right]\right)\mathbf d,
\end{split}
\end{equation*}
which may be expressed as
\begin{align*}
\left[\begin{matrix} \mathbf a_{i_1}&\dots& \mathbf a_{i_{R-k_{\mathbf C}+1}} \end{matrix}\right]\textup{\text{Diag}}(\mathbf d)
&\left[\begin{matrix} \mathbf b_{i_1}&\dots& \mathbf b_{i_{R-k_{\mathbf C}+1}} \end{matrix}\right]^T
\\
=&\left[\begin{matrix} \widehat{\mathbf a}_{i_1}&\dots& \widehat{\mathbf a}_{i_{R-k_{\mathbf C}+1}} \end{matrix}\right]\textup{\text{Diag}}(\mathbf d)
\left[\begin{matrix} \widehat{\mathbf b}_{i_1}&\dots& \widehat{\mathbf b}_{i_{R-k_{\mathbf C}+1}} \end{matrix}\right]^T.
\end{align*}
By \eqref{eq5.1}, $\min (k_{\mathbf A},k_{\mathbf B})\geq R-k_{\mathbf C}+1$.
Hence, the matrices $\left[\begin{matrix} \mathbf a_{i_1}&\dots& \mathbf a_{i_{R-k_{\mathbf C}+1}} \end{matrix}\right]$
 and $\left[\begin{matrix} \mathbf b_{i_1}&\dots& \mathbf b_{i_{R-k_{\mathbf C}+1}} \end{matrix}\right]$ have full column rank.
Since by
construction the vector $\mathbf d$ has only nonzero components, it follows that
\begin{align*}
\mathbf a_{i_1},\dots,\mathbf a_{i_{R-k_{\mathbf C}+1}}&\in\textup{span}\{\widehat{\mathbf a}_{i_1},\dots,\widehat{\mathbf a}_{i_{R-k_{\mathbf C}+1}}\},\\
\mathbf b_{i_1},\dots,\mathbf b_{i_{R-k_{\mathbf C}+1}}&\in\textup{span}\{\widehat{\mathbf b}_{i_1},\dots,\widehat{\mathbf b}_{i_{R-k_{\mathbf C}+1}}\}.
\end{align*}
By \eqref{eq5.1}, $\max (k_{\mathbf A},k_{\mathbf B})\geq m:=R-k_{\mathbf C}+2 \geq 2$.
Without loss of generality we confine ourselves to the case  $k_{\mathbf A}\geq m$. Then,
by Lemma \ref{lemma0.1unique}, there exists a nonsingular diagonal matrix ${\mathbf \Lambda}$ such that
$\mathbf A=\widehat{\mathbf A}{\mathbf \Lambda}$.
Denoting $\lambda_{\mathbf A}:=\mathbf{\Pi}^T\textup{diag}(\mathbf{\Lambda}^{-1})$ and $\mathbf\Lambda_{\mathbf A}=\textup{Diag}(\mathbf{\lambda}_{\mathbf A})$
and applying Lemma \ref{lemma:propP}, we have
$$
\bar{\mathbf A}=\widehat{\mathbf A}\mathbf{\Pi}=\mathbf A{\mathbf \Lambda}^{-1}\mathbf{\Pi}=\mathbf A\textup{Diag}(\mathbf{\Pi}{\mathbf \lambda}_{\mathbf A})\mathbf{\Pi}=\mathbf A\mathbf{\Pi}\textup{Diag}({\mathbf \lambda}_{\mathbf A})
=\mathbf A\mathbf{\Pi}{\mathbf \Lambda}_{\mathbf A}.
$$
It follows from \eqref{eqProofPr5.1} and \eqref{eqT_V} that
$$
(\mathbf C\odot\mathbf A)\mathbf B^T=
(\mathbf C\mathbf{\Pi}{\mathbf \Lambda}_{\mathbf C}\odot\bar{\mathbf A})\bar{\mathbf B}^T=
(\mathbf C\mathbf{\Pi}{\mathbf \Lambda}_{\mathbf C}\odot\mathbf A\mathbf{\Pi}{\mathbf \Lambda}_{\mathbf A})\bar{\mathbf B}^T=
(\mathbf C\odot\mathbf A)\mathbf{\Pi}{\mathbf \Lambda}_{\mathbf C}{\mathbf \Lambda}_{\mathbf A}\bar{\mathbf B}^T.
$$
 Since $k_{\mathbf A}\geq R-k_{\mathbf C}+2$, it follows
that condition (\text{K{\scriptsize 1}}) holds for the matrices $\mathbf A$ and $\mathbf C$.
From Proposition \ref{Prop:KCUW} (1) it follows that the matrix $\mathbf C\odot\mathbf A$
has full column rank. Hence, $\mathbf B^T=\mathbf{\Pi}{\mathbf \Lambda}_{\mathbf C}{\mathbf \Lambda}_{\mathbf A}\bar{\mathbf B}^T$, i.e., $\bar{\mathbf B}=\mathbf B\mathbf{\Pi}{\mathbf \Lambda}_{\mathbf A}^{-1}{\mathbf \Lambda}_{\mathbf C}^{-1}=:\mathbf B\mathbf{\Pi}{\mathbf \Lambda}_{\mathbf B}$.
\qquad\end{proof}

\begin{example}\label{ex:2.4}
Consider the $2\times 3\times 3$ tensor given by
$\mathcal T=[\widehat{\mathbf A},\widehat{\mathbf B},\widehat{\mathbf C}]_3$, where
$$
\widehat{\mathbf A}=\left[\begin{array}{rrrr}1&1&1\\-1&-2&3\end{array}\right],\ \
\widehat{\mathbf B}=\left[\begin{array}{rrrr}6&12&2\\3&4&-1\\4& 6& -4\end{array}\right],\ \
\widehat{\mathbf C}=\left[\begin{array}{rrrr}1&0&0\\0&1&0\\0& 0& 1\end{array}\right].
$$
Since $k_{\widehat{\mathbf A}}+k_{\widehat{\mathbf B}}+k_{\widehat{\mathbf C}}=2+3+3\geq 2 \times 3+2$, it follows from Theorem
\ref{theoremKruskal} that $r_{\mathcal T}=3$ and that the CPD of $\mathcal T$ is unique.

Increasing the number of terms, we also have $\mathcal T=[\mathbf A,\mathbf B,\mathbf C]_4$ for
$$
\mathbf A=\left[\begin{array}{rrrr}1&0&1&1\\0&1&1&2\end{array}\right],\ \
\mathbf B=\left[\begin{array}{rrrr}1&1&0&0\\1&0&1&0\\1& 0& 0& 1\end{array}\right],\ \
\mathbf C=\left[\begin{array}{rrrr}6&-6&-3&-2\\12&-24&-8&-6\\2& 6& -3& -6\end{array}\right].
$$
Since  $k_{\mathbf A}=2$ and $k_{\mathbf B}=k_{\mathbf C}=3$, condition \eqref{eq5.1} holds.
Hence, by Proposition \ref{proponematrixisunique}, if $\mathcal T=[\bar{\mathbf A},\bar{\mathbf B},\bar{\mathbf C}]_4$ and $\bar{\mathbf C}=\mathbf C$,
then there exists a nonsingular diagonal matrix $\mathbf\Lambda$ such that $\bar{\mathbf A}=\mathbf A\mathbf\Lambda$ and $\bar{\mathbf B}=\mathbf B\mathbf\Lambda^{-1}$.

The following condition is also satisfied:
$$
\max(\min( k_{\mathbf A}, k_{\mathbf C}-1),\ \min( k_{\mathbf A}-1, k_{\mathbf C}))+k_{\mathbf B}\geq R+1.
$$
By symmetry, we have from Proposition \ref{proponematrixisunique} that, if $\mathcal T=[\bar{\mathbf A},\bar{\mathbf B},\bar{\mathbf C}]_4$ and $\bar{\mathbf B}=\mathbf B$,
then there exists a nonsingular diagonal matrix $\mathbf\Lambda$ such that $\bar{\mathbf A}=\mathbf A\mathbf\Lambda$ and $\bar{\mathbf C}=\mathbf C\mathbf\Lambda^{-1}$.

Finally, we show that the inequality of condition  \eqref{eq5.1} is sharp. We have
$$
\max(\min( k_{\mathbf B}, k_{\mathbf C}-1),\ \min( k_{\mathbf B}-1, k_{\mathbf C}))+k_{\mathbf A}=R< R+1.
$$
One can verify that  $\mathcal T=[\bar{\mathbf A},\bar{\mathbf B},\bar{\mathbf C}]_4$ with $\bar{\mathbf A}=\mathbf A$ and with $\bar{\mathbf B}$ and $\bar{\mathbf C}$ given by
 $$
 \bar{\mathbf B}=
 \left[
 \begin{array}{rrr}
  6& 12&   2\\
  3&  4&  -1\\
  4&  6&  -4
 \end{array}
\right]
  \left[
 \begin{array}{rrr}
 1&0&0\\
 0&\alpha&0\\
 0&0&\beta
\end{array}
\right]
\left[
 \begin{array}{rrrr}
1&   1&    1 &   1 \\
1&   2&  4/3&  3/2 \\
1&  -3&    3&    9 \\
\end{array}
\right],
 $$
 $$
 \bar{\mathbf C}=
 \left[
 \begin{array}{rrr}
 1&0&0\\
 0&1/\alpha&0\\
 0&0&1/\beta
\end{array}
\right]
  \left[
 \begin{array}{rrrr}
  6&      -6&      -3&      -2\\
 -24/5&  48/5&  16/5&  12/5\\
  2/15&   2/5&  -1/5&  -2/5\\
\end{array}
\right],
 $$
 for arbitrary nonzero $\alpha$ and $\beta$. Hence, there exist infinitely many PDs $\mathcal T=[\bar{\mathbf A},\bar{\mathbf B},\bar{\mathbf C}]_4$ with $\bar{\mathbf A}=\mathbf A$;
 the columns of $\bar{\mathbf B}$ and $\bar{\mathbf C}$ are only proportional to the columns
 of $\mathbf B$ and $\mathbf C$, respectively, for  $\alpha=-2/5$ and $\beta=1/15$.
We conclude that the inequality of condition  \eqref{eq5.1} is sharp.
\end{example}

\subsection{Two factor matrices in common}\label{subsection5.2}

In this subsection we assume that two PDs have the factor matrices $\mathbf A$ and  $\mathbf C$ in common. We do not assume however that in the two PDs the columns of these matrices are permuted in the same manner. The result that we are concerned with, is Proposition \ref{proptwomatrixisunique}.

\begin{proof}
Without loss of generality, we confine ourselves to the case
\begin{equation}
k_{\mathbf C}\geq 2\quad \text{ and } \quad \min( k_{\mathbf A}-1, k_{\mathbf B})+r_{\mathbf C}\geq R+1.
\label{eq:proofprop1.24intro}
\end{equation}
We set for brevity $r:=r_{\mathbf C}$. Denoting  $\mathbf{\Pi}=\mathbf{\Pi}_{\mathbf A}\mathbf{\Pi}_{\mathbf C}^T$ and
$\widehat{\mathbf B}=\bar{\mathbf B}{\mathbf \Lambda}_{\mathbf A}{\mathbf \Lambda}_{\mathbf C}\mathbf{\Pi}_{\mathbf C}^T$, we have
$[\mathbf A\mathbf{\Pi}_{\mathbf A}{\mathbf \Lambda}_{\mathbf A}, \bar{\mathbf B}, \mathbf C\mathbf{\Pi}_{\mathbf C}{\mathbf \Lambda}_{\mathbf C}]_R
= [\mathbf A\mathbf{\Pi}_{\mathbf A}\mathbf{\Pi}_{\mathbf C}^T, \bar{\mathbf B}{\mathbf \Lambda}_{\mathbf A}{\mathbf \Lambda}_{\mathbf C}\mathbf{\Pi}_{\mathbf C}^T, \mathbf C]_R=
[\mathbf A\mathbf{\Pi}, \widehat{\mathbf B}, \mathbf C]_R$.
We will show that, under \eqref{eq:proofprop1.24intro}, $[\mathbf A, \mathbf B, \mathbf C]_R = [\mathbf A\mathbf{\Pi}, \widehat{\mathbf B}, \mathbf C]_R$ implies that $\mathbf{\Pi}=\mathbf I_R$. This, in turn, immediately implies that
$\mathbf{\Pi}_{\mathbf A}=\mathbf{\Pi}_{\mathbf C}$ and  $\bar{\mathbf B}=\mathbf B\mathbf{\Pi}_{\mathbf A}{\mathbf \Lambda}_{\mathbf A}^{-1}{\mathbf \Lambda}_{\mathbf C}^{-1}$.

\begin{romannum}
\item Let us fix integers $i_1,\dots,i_r$ such that the columns $\mathbf c_{i_1},\dots,\mathbf c_{i_r}$ form a basis of $\textup{range}(\mathbf C)$
and let us set $\{j_1,\dots,j_{R-r}\}:=\{1,\dots,R\}\setminus\{i_1,\dots,i_r\}$.
Let $\mathbf X \in\mathbb F^{K\times r}$,
denote a  right inverse of $\left[\begin{matrix}\mathbf c_{i_1}&\dots&\mathbf c_{i_r}\end{matrix}\right]^T$,
i.e., $\left[\begin{matrix}\mathbf c_{i_1}&\dots&\mathbf c_{i_r}\end{matrix}\right]^T\mathbf X=\mathbf I_r$.
Define the subspaces $E, E_{i_k} \subseteq \mathbb F^R$ as follows:
 \begin{align*}
 E&=\textup{span}\{\mathbf e_{j_1}^R,\dots \mathbf e_{j_{R-r}}^R\},\\
 E_{i_k}&=\textup{span}\{\mathbf e_l^R:\ \mathbf c^T_l \mathbf x_k \ne 0,\ l\in\{j_1,\dots,j_{R-r} \} \},\qquad k\in\{1,\dots,r\}.
 \end{align*}
 By construction, $E_{i_k} \subseteq E$ and $\mathbf e_{i_l}^R \notin E_{i_k}$, $k,l \in \{1,\dots,r\}$.
\item  Let us show that
$\mathbf{\Pi} \textup{span} \{E_{i_k},\mathbf e_{i_k}^R\}=\textup{span}\{E_{i_k},\mathbf e_{i_k}^R\}$ for all $k\in\{1,\dots,r\}$.
Let us fix $k\in\{1,\dots,r\}$. Assume that  $\mathbf C^T\mathbf x_k$ has nonzero entries at positions $k_1,\dots,k_L$. Denote these entries by $\alpha_1, \dots, \alpha_L$. From the definition of $\mathbf X$ and $E_{i_k}$ it follows that $L \leq R-r+1$ and
$\textup{span}\{\mathbf e_{k_1}^R,\dots,\mathbf e_{k_L}^R\}=\textup{span}\{E_{i_k},\mathbf e_{i_k}^R\}$.

Define $\mathbf P_k=\left[\begin{matrix}\mathbf e_{k_1}^R&\dots&\mathbf e_{k_L}^R\end{matrix}\right]$. Then we have
\begin{eqnarray}
\mathbf P_k\mathbf P_k^T\textup{Diag}(\mathbf C^T\mathbf x_k)\mathbf P_k\mathbf P_k^T & = & \textup{Diag}(\mathbf C^T\mathbf x_k) ,\label{eq:proofprop1.24iia} \\
\mathbf P_k^T\textup{Diag}(\mathbf C^T\mathbf x_k)\mathbf P_k & = & \textup{Diag}( \left[\begin{matrix}\alpha_1&\dots&\alpha_L\end{matrix}\right]). \label{eqtwomatrices2}
\end{eqnarray}
Further, $[\mathbf A, \mathbf B, \mathbf C]_R = [\mathbf A\mathbf{\Pi}, \widehat{\mathbf B}, \mathbf C]_R$ implies that
\begin{equation}
\mathbf A \textup{Diag}(\mathbf C^T\mathbf x_k) \mathbf B^T=
\mathbf A\mathbf{\Pi} \textup{Diag}(\mathbf C^T \mathbf x_k) \widehat{\mathbf B}^T. \label{eq:proofprop1.24iib}
\end{equation}
Using \eqref{eq:proofprop1.24iia}--\eqref{eq:proofprop1.24iib}, we obtain
\begin{eqnarray}
\mathbf A\mathbf P_k\textup{Diag}( \left[\begin{matrix}\alpha_1&\dots&\alpha_L\end{matrix}\right])\mathbf P_k^T\mathbf B^T & = & \mathbf A \mathbf P_k\mathbf P_k^T\textup{Diag}(\mathbf C^T\mathbf x_k)\mathbf P_k\mathbf P_k^T \mathbf B^T \nonumber \\
 &= &\mathbf A \textup{Diag}(\mathbf C^T\mathbf x_k) \mathbf B^T \nonumber  \\
 & = &
\mathbf A\mathbf{\Pi} \textup{Diag}(\mathbf C^T\mathbf x_k) \widehat{\mathbf B}^T \nonumber  \\
& = & \mathbf A\mathbf{\Pi} \mathbf P_k\mathbf P_k^T\textup{Diag}(\mathbf C^T\mathbf x_k)\mathbf P_k\mathbf P_k^T \widehat{\mathbf B}^T \nonumber \\
& = &\mathbf A\mathbf{\Pi}\mathbf P_k\textup{Diag}( \left[\begin{matrix}\alpha_1&\dots&\alpha_L\end{matrix}\right])\mathbf P_k^T\widehat{\mathbf B}^T. \label{eq:proofprop1.24iic}
\end{eqnarray}
Note that $\mathbf B\mathbf P_k=\left[\begin{matrix}\mathbf b_{k_1}&\dots&\mathbf b_{k_L}\end{matrix}\right]$. Since, by \eqref{eq:proofprop1.24intro}, $k_{\mathbf B}\geq R-r+1\geq L$, it follows that the matrix $\mathbf P_k^T \widehat{\mathbf B}^T$ has full row rank. Further noting that $\mathbf A \mathbf P_k=\left[\begin{matrix}\mathbf a_{k_1}&\dots&\mathbf a_{k_L}\end{matrix}\right]$ and $\mathbf A \mathbf{\Pi} \mathbf P_k=\left[\begin{matrix} (\mathbf A \mathbf{\Pi})_{k_1}&\dots& (\mathbf A \mathbf{\Pi})_{k_L}\end{matrix}\right]$, we obtain from \eqref{eq:proofprop1.24iic} that
\begin{equation}\label{eqincl}
\textup{span}\{\mathbf a_{k_1},\dots,\mathbf a_{k_L}\}\subseteq \textup{span}\{(\mathbf A\mathbf{\Pi})_{k_1},\dots,(\mathbf A\mathbf{\Pi})_{k_L}\}.
\end{equation}
Since, by \eqref{eq:proofprop1.24intro}, $k_{\mathbf A}\geq R-r+2\geq L+1$,  \eqref{eqincl} is only possible if
$\mathbf{\Pi} \textup{span}\{E_{i_k},\mathbf e_{i_k}^R\}=\textup{span}\{E_{i_k},\mathbf e_{i_k}^R\}$.
\item Let us show that $\mathbf{\Pi}E=E$. Let us fix $j\in \{j_1,\dots,j_{R-r}\}$.
From $\mathbf X^T\mathbf c_{i_k}=\mathbf e_k^r$ for $k\in\{1,\dots,r\}$, the fact that the vectors
 $\mathbf c_{i_1},\dots,\mathbf c_{i_r}$ form a basis of $\textup{range}(\mathbf C)$, and $k_{\mathbf C}\geq 2$, it follows that
the vector $\mathbf X^T \mathbf c_j$ has at least two nonzero components, say, the $m$-th and $n$-th component. Since $\mathbf c^T_j \mathbf x_m \ne 0$ and $\mathbf c^T_j \mathbf x_n \ne 0$, we have $\mathbf e_j^R \in E_{i_m}\cap E_{i_n}$. From the preceding steps we have
\begin{align*}
\mathbf{\Pi}\mathbf e_j^R \in \mathbf{\Pi}(E_{i_m}\cap E_{i_n})
& \stackrel{\mbox{\scriptsize (i)}}{=}
\mathbf{\Pi}\left(\textup{span}\{E_{i_m},\mathbf e_{i_m}^R\}\cap \textup{span}\{E_{i_n},\mathbf e_{i_n}^R\}\right) \\
& \stackrel{\mbox{\scriptsize (ii)}}{\subseteq}
\textup{span}\{E_{i_m},\mathbf e_{i_m}^R\} \cap \textup{span}\{E_{i_n},\mathbf e_{i_n}^R\}
\stackrel{\mbox{\scriptsize (i)}}{=} E_{i_m} \cap E_{i_n} \subseteq E.
\end{align*}
Since this holds true for any index $j \in \{j_1,\dots,j_{R-r}\}$, it follows that $\mathbf{\Pi}E=E$.
\item
Let us show that
$\mathbf{\Pi} \mathbf e_{i_k}^R=\mathbf e_{i_k}^R$ for all $k\in\{1,\dots,r\}$. From the preceding steps we have
$$
\mathbf{\Pi} E_{i_k} \stackrel{\mbox{\scriptsize (i)}}{=} \mathbf{\Pi}\left(\textup{span}\{E_{i_k},\mathbf e_{i_k}^R\}\cap E\right)
\stackrel{\mbox{\scriptsize (ii), (iii)}}{\subseteq} \textup{span}\{E_{i_k},\mathbf e_{i_k}^R\}\cap E
\stackrel{\mbox{\scriptsize (i)}}{=}
E_{i_k}.
$$
On the other hand, we have from step (iii) that $\mathbf{\Pi}\textup{span}\{E_{i_k},\mathbf e_{i_k}^R\}=\{E_{i_k},\mathbf e_{i_k}^R\}$, with, as shown in step (i), $\mathbf e_{i_k}^R \notin E_{i_k}$. It follows that  $\mathbf{\Pi} \mathbf e_{i_k}^R=\mathbf e_{i_k}^R$ for all $k\in\{1,\dots,r\}$.
\item We have so far shown that, if
the columns $\mathbf c_{i_1},\dots,\mathbf c_{i_r}$ form a basis of $\textup{range}(\mathbf C)$, then
$\mathbf{\Pi}\left[\begin{matrix}\mathbf e_{i_1}^R&\dots \mathbf e_{i_r}^R\end{matrix} \right]$
$=\left[\begin{matrix}\mathbf e_{i_1}^R&\dots \mathbf e_{i_r}^R\end{matrix} \right]$. To complete the proof of the overall equality
$\mathbf{\Pi}=\mathbf I_R$, it suffices to note that a basis of $\textup{range}(\mathbf C)$ can be constructed starting from any column of $\mathbf C$.
 \end{romannum}
\end{proof}
\section{Overall CPD uniqueness}\label{Section uniqueness CPD}
In Proposition   \ref{ProFullUniq1onematrixWm} and Corollaries   \ref{ProFullUniq1onematrix}--\ref{ProFullUniq1onematrixcor1}
overall CPD uniqueness is derived from uniqueness of one factor matrix, where the latter is guaranteed by Proposition \ref{proponematrixisunique}.
In Proposition \ref{ProFullUniq1} and Corollaries \ref{Prop5.4}--\ref{corr:1.30}
overall CPD is derived from  uniqueness of two factor matrices, where the latter is guaranteed by Proposition \ref{proptwomatrixisunique}.
 We illustrate our results with some examples.

{\em Proof of Proposition \ref{ProFullUniq1onematrixWm}.}
 By \eqref{eq5.1}, $k_{\mathbf C}\geq 1$ and $\min(k_{\mathbf A},k_{\mathbf B})\geq m_{\mathbf C}-1$. Hence, by Proposition \ref{Prop:1.16}, $r_{\mathcal T}=R$ and the third factor matrix of $\mathcal T$ is unique. The result now follows from Proposition \ref{proponematrixisunique}.
\qquad\endproof

{\em Proof of Corollary \ref{ProFullUniq1onematrix}.}
From Proposition \ref{Prop:KCUW} (3) it follows that $\text{\textup{(W{\scriptsize m}}}_{\mathbf C}$$\text{\textup{)}}$ holds for $\mathbf A$, $\mathbf B$, and $\mathbf C$. Since \textup{(U{\scriptsize 1})} is equivalent to \textup{(C{\scriptsize 1})}, it follows from Proposition \ref{Prop:KCUW} (7) that $\mathbf A\odot\mathbf B$ has full column rank. The result now follows from Proposition \ref{ProFullUniq1onematrixWm}.
\qquad\endproof

{\em Proof of Corollaries \ref{ProFullUniq1onematrixHm} and \ref{ProFullUniq1onematrixcor1}.}
By Proposition \ref{Prop:KCUW} (2), both $\text{\textup{(H{\scriptsize m}}}_{\mathbf C}$$\text{\textup{)}}$ and $\text{\textup{(C{\scriptsize m}}}_{\mathbf C}$$\text{\textup{)}}$ imply $\text{\textup{(U{\scriptsize m}}}_{\mathbf C}$$\text{\textup{)}}$. The result now follows from Corollary \ref{ProFullUniq1onematrix}.\qquad\endproof

{\em Proof of Proposition \ref{ProFullUniq1}.}
Without loss of generality we assume that \textup{(i)} and \textup{(iii)} hold.
By  Proposition \ref{Prop:KCUW} (9),
\begin{equation}
\min(k_{\mathbf B},k_{\mathbf C})\geq m_{\mathbf A}\geq 2,\qquad \min(k_{\mathbf A},k_{\mathbf B})\geq m_{\mathbf C}\geq 2.
\label{minkakb}
\end{equation}
 It follows from   Proposition \ref{Prop:1.16} that $r_{\mathcal T}=R$ and that the  first and third factor matrices  of the tensor $\mathcal T$ are unique. One can easily check that \eqref{minkakb} implies \eqref{eq5.two}. Hence, by Proposition \ref{proptwomatrixisunique}, the CPD of $\mathcal T$ is unique.
\qquad\endproof

{\em Proof of Corollary \ref{Prop5.4HmHm}.}
Without loss of generality we assume that \textup{(ii)} and \textup{(iii)} hold. From
Proposition \ref{Prop:KCUW} (2) it follows that
\textup{(ii)} and \textup{(iii)} in Proposition \ref{ProFullUniq1} also hold.
Hence, by Proposition \ref{ProFullUniq1}, $r_{\mathcal T}=R$ and the CPD of $\mathcal T$ is unique.
\qquad\endproof

{\em Proof of Corollary \ref{Prop5.4}.}
By Proposition \ref{Prop:KCUW} (2), if two of the matrices in \eqref{cond13C} have full column rank, then at least two of conditions
\textup{(i)}--\textup{(iii)} in Proposition \ref{ProFullUniq1} hold. Hence, by Proposition \ref{ProFullUniq1}, $r_{\mathcal T}=R$ and the CPD of $\mathcal T$ is unique.
\qquad\endproof

{\em Proof of Corollary \ref{Proptwokkk}.}
Without loss of generality we assume that $(\mathbf X,\mathbf Y,\mathbf Z)=(\mathbf B,\mathbf C,\mathbf A)$. Then,
\begin{align*}
\begin{cases}
\left\{
\begin{array}{ll}
k_{\mathbf B}+r_{\mathbf A}+r_{\mathbf C}&\geq 2R+2,\\
k_{\mathbf A}+r_{\mathbf C}&\geq R+2,
\end{array}
\right.\\
\left\{
\begin{array}{ll}
k_{\mathbf B}+r_{\mathbf A}+r_{\mathbf C}&\geq 2R+2,\\
r_{\mathbf A}+k_{\mathbf C}&\geq R+2,
\end{array}
\right.\end{cases}\Rightarrow
\begin{cases}
\text{\textup{(K{\scriptsize m}}}_{\mathbf A} \text{\textup{)}}\   \text{ holds for }\ \mathbf B \ \text{ and }\ \mathbf C,\\
\text{\textup{(K{\scriptsize m}}}_{\mathbf C} \text{\textup{)}}\   \text{ holds for }\ \mathbf A \ \text{ and }\ \mathbf B,
\end{cases}
\end{align*}
where $m_{\mathbf A} = R - r_{\mathbf A} + 2$ and $m_{\mathbf C} = R - r_{\mathbf C} + 2$.
From Proposition \ref{Prop:KCUW} (1) it follows that the matrices
$\mathcal C_{m_{\mathbf A}}(\mathbf B)\odot \mathcal C_{m_{\mathbf A}}(\mathbf C)$ and
$\mathcal C_{m_{\mathbf C}}(\mathbf A)\odot \mathcal C_{m_{\mathbf C}}(\mathbf B)$
have full column rank. Hence, by  Corollary \ref{Prop5.4}, $r_{\mathcal T}=R$ and the CPD of $\mathcal T$ is unique.
\qquad\endproof

{\em Proof of Corollary \ref{corr:1.30}.}
It can be easily checked that all conditions of Corollary \ref{Proptwokkk} hold. Hence, $r_{\mathcal T}=R$ and the CPD of $\mathcal T$ is unique.
\qquad\endproof

\begin{example}\label{example5.6}
Consider a $5\times 5\times 5$ tensor given by the PD $\mathcal T=[\mathbf A, \mathbf B,  \mathbf C]_6$, where the matrices
$\mathbf A, \mathbf B, \mathbf C\in\mathbb C^{5\times 6}$  satisfy
$$
r_{\mathbf A}=r_{\mathbf B}=r_{\mathbf C}=5, \qquad k_{\mathbf A}=k_{\mathbf B}=k_{\mathbf C}=4.
$$
For instance, consider
$$
\mathbf A=
\left[
\begin{matrix}
1&0&0&0&0&* \\
0&1&0&0&0&* \\
0&0&1&0&0&* \\
0&0&0&1&0&* \\
0&0&0&0&1&0\\
\end{matrix}
\right], \
\mathbf B=
\left[
\begin{matrix}
1&0&0&0&0&* \\
0&1&0&0&0&* \\
0&0&1&0&0&* \\
0&0&0&1&0&0\\
0&0&0&0&1&* \\
\end{matrix}
\right]
, \
\mathbf C=
\left[
\begin{matrix}
1&0&0&0&0&* \\
0&1&0&0&0&* \\
0&0&1&0&0&0\\
0&0&0&1&0&* \\
0&0&0&0&1&* \\
\end{matrix}
\right],
$$
where $*$ denotes arbitrary nonzero entries.
Then Kruskal's condition \eqref{Kruskal} does not hold. On the other hand,  the conditions  of Corollary  \ref{Proptwokkk}
are satisfied. Hence, the PD of $\mathcal T$ is canonical and unique.
\end{example}
\begin{example}\label{example5.7}
Consider the $4\times 4\times 4$ tensor given by the PD $\mathcal T=[\mathbf A, \mathbf B,  \mathbf C]_5$,  where
$$
\mathbf A=
\left[
\begin{matrix}
1&0&0&0&1\\
0&1&0&0&1\\
0&0&1&0&1\\
0&0&0&1&0\\
\end{matrix}
\right], \quad
\mathbf B=
\left[
\begin{matrix}
1&0&0&0&1\\
0&1&0&0&1\\
0&0&1&0&0\\
0&0&0&1&1\\
\end{matrix}
\right], \quad
\mathbf C=
\left[
\begin{matrix}
1&0&0&0&1\\
0&1&0&0&0\\
0&0&1&0&1\\
0&0&0&1&1\\
\end{matrix}
\right].
$$
We have
$$
r_{\mathbf A}=r_{\mathbf B}=r_{\mathbf C}=4, \qquad k_{\mathbf A}=k_{\mathbf B}=k_{\mathbf C}=3,\qquad
m_{\mathbf A}=m_{\mathbf B}=m_{\mathbf C}=3.
$$
Hence, Kruskal's condition \eqref{Kruskal} does not hold.
Moreover, condition $\text{\textup{(K{\scriptsize 3})}}$ does not hold for  $(\mathbf A,\mathbf B)$,
$(\mathbf C,\mathbf A)$, nor $(\mathbf B,\mathbf C)$. Hence, the conditions of Corollary \ref{Proptwokkk}
 are not satisfied.
On the other hand, we have
\begin{align*}
\mathcal C_{3}(\mathbf A)\odot &\mathcal C_{3}(\mathbf B)\\
=&\left[
\begin{matrix}
\mathbf e^{16}_1&
\mathbf e^{16}_6&
\mathbf e^{16}_2&
\mathbf e^{16}_{11}&
\mathbf e^{16}_{1,-3}&
\mathbf e^{16}_{6,10}&
\mathbf e^{16}_{16}&
\mathbf e^{16}_{1,4}&
\mathbf e^{16}_{6,-14}&
\mathbf e^{16}_{11,12,15,16}
\end{matrix}
\right], \\
\mathcal C_{3}(\mathbf C)\odot &\mathcal C_{3}(\mathbf A)\\
=&\left[
\begin{matrix}
\mathbf e^{16}_1&
\mathbf e^{16}_6&
\mathbf e^{16}_{1,5}&
\mathbf e^{16}_{11}&
-\mathbf e^{16}_{9}&
\mathbf e^{16}_{10,11}&
\mathbf e^{16}_{16}&
\mathbf e^{16}_{1,13}&
\mathbf e^{16}_{6,16,-8,-14}&
\mathbf e^{16}_{11,12}
\end{matrix}
\right], \\
\mathcal C_{3}(\mathbf B)\odot &\mathcal C_{3}(\mathbf C)\\
=&\left[
\begin{matrix}
\mathbf e^{16}_1&
\mathbf e^{16}_6&
\mathbf e^{16}_{5,6}&
\mathbf e^{16}_{11}&
\mathbf e^{16}_{11,-3}&
\mathbf e^{16}_7&
\mathbf e^{16}_{16}&
\mathbf e^{16}_{1,4,13,16}&
\mathbf e^{16}_{6,-8}&
\mathbf e^{16}_{11,15}
\end{matrix}
\right],
\end{align*}
where
$$
\mathbf e^{16}_{i,\pm j}:=\mathbf e^{16}_i\pm\mathbf e^{16}_j,\quad
\mathbf e^{16}_{i,j,\pm k,\pm l}:=\mathbf e^{16}_i+\mathbf e^{16}_j\pm\mathbf e^{16}_k\pm\mathbf e^{16}_l,\quad
i,j,k,l\in\{1,\dots,16\}.
$$
It is easy to check that the matrices
$\mathcal C_{3}(\mathbf A)\odot \mathcal C_{3}(\mathbf B)$,  $\mathcal C_{3}(\mathbf C)\odot \mathcal C_{3}(\mathbf A)$ and
$\mathcal C_{3}(\mathbf B)\odot \mathcal C_{3}(\mathbf C)$ have full column rank.
Hence, by Corollary \ref{Prop5.4}, the PD is canonical and unique.

One can easily verify that $H_{\mathbf A\mathbf B}(\delta)=H_{\mathbf B\mathbf C}(\delta)=H_{\mathbf C\mathbf A}(\delta)=\min(\delta,3)$.
Hence the uniqueness of the CPD follows also from Corollary \ref{Prop5.4HmHm}.

Note that, since condition \eqref{eq5.1} does not hold, the result does not follow from Proposition
\ref{ProFullUniq1onematrixWm} and its Corollaries \ref{ProFullUniq1onematrix}--\ref{ProFullUniq1onematrixcor1}.
\end{example}

\begin{example}\label{OneHmbettertwoHm}
Consider the $5\times 5\times 8$ tensor given by the PD $\mathcal T=[\mathbf A, \mathbf B,  \mathbf C]_8$,  where
$$
\mathbf A=\left[\begin{matrix}\widehat{\mathbf A}\\ (\mathbf e_1^8)^T\end{matrix}\right]\in\mathbb F^{5\times 8},\qquad
\mathbf B=\left[\begin{matrix}\widehat{\mathbf B}\\ (\mathbf e_8^8)^T\end{matrix}\right]\in\mathbb F^{5\times 8},\qquad
 \mathbf C=\mathbf I_8
$$
and
$\widehat{\mathbf A}$ and $\widehat{\mathbf B}$ are $4\times 8$ matrices such that $k_{\widehat{\mathbf A}}=k_{\widehat{\mathbf B}}=4$.
We have $r_{\mathbf A}=r_{\mathbf B}=5$, $k_{\mathbf A}=k_{\mathbf B}=4$, and $r_{\mathbf C}=k_{\mathbf C}=8$.
One can easily check that
$$
H_{\mathbf A\mathbf B}(\delta)=
\begin{cases}
\delta,&1\leq\delta\leq 4,\\
3,&\delta=5,\\
2,&6\leq\delta\leq 8
\end{cases}\geq\min(\delta,8-8+2)
$$
and that condition \eqref{eq5.1} holds.
Hence, by Corollary \ref{ProFullUniq1onematrixHm}, the PD is canonical and unique. On the other hand,
$H_{\mathbf B\mathbf C}(\delta)=H_{\mathbf C\mathbf A}(\delta)=4<\min(\delta,8-5+2)$ for $\delta=5$.
Hence, the result does not follow from Corollary \ref{Prop5.4HmHm}.
\end{example}

\begin{example}\label{almostlast}
Let
$$
\mathbf A=\left[\begin{matrix}
1&0&0&1&1\\
0&1&0&1&2\\
0&0&1&1&3
\end{matrix}\right],\qquad
\mathbf B=\left[\begin{matrix}
1&0&0&1&1\\
0&1&0&1&3\\
0&0&1&1&5
\end{matrix}\right],\qquad \mathbf C=\mathbf I_5.
$$
It has already been shown in \cite{rankeqkrank2006} that the CPD of the tensor $\mathcal T=[\mathbf A,\mathbf B,\mathbf C]_5$ is unique. We give a shorter proof, based on Corollary \ref{ProFullUniq1onematrix}.
It is easy to verify that
\begin{align*}
\mathcal C_2(\mathbf A)\odot \mathcal C_2(\mathbf B)&=
\left[\begin{array}{rrrrrrrrrr}
1&	0&	1&	6 & 0&	 1&	  1&	0&	0&	2\\
0&	0&	1&	10& 0&	 0&	  0&	0&	0&	4\\
0&	0&	0&	0 & 0&	-1&	 -5&	0&	0&	2\\
0&	0&	1&	9 & 0&	 0&	  0&	0&	0&	4\\
0&	1&	1&	15& 0&	 0&	  0&	1&	1&	8\\
0&	0&	0&	0 & 0&	 0&	  0&	1&	3&	4\\
0&	0&	0&	0 & 0&	-1&	 -3&	0&	0&	2\\
0&	0&	0&	0 & 0&	 0&	  0&	1&	2&	4\\
0&	0&	0&	0 & 1&	 1&	  15&	1&	6&	2
\end{array}\right],\\
\textup{ker}(\mathcal C_2(\mathbf A)\odot \mathcal C_2(\mathbf B))&=\textup{span}\{\left[\begin{matrix}0& 0& -4& 0& 0& 2& 0& -4& 0& -1\end{matrix}\right]^T\}.
\end{align*}
If $\mathbf d\in \mathbb C^5$ is such that
$\textup{\text{diag}}(\mathcal C_2(\textup{\text{Diag}}(\mathbf d)))\in \textup{ker}(\mathcal C_2(\mathbf A)\odot \mathcal C_2(\mathbf B))$, we have
\begin{align*}
d_1d_2&=0,    &d_2d_3&= 0,  &d_3d_4&= -4c, &d_4d_5&=-c.\\
d_1d_3&=0,    &d_2d_4&= 2c, &d_3d_5&=  0,\\
d_1d_4&=-4c,  &d_2d_5&= 0,\\
d_1d_5&=0.
\end{align*}
One can check that this set of equations only has a solution if $c = 0$, in which case $\mathbf d= \mathbf 0$. Hence, by Corollary \ref{ProFullUniq1onematrix},
the PD is canonical and
unique.
Note that, since $m_{\mathbf A}=m_{\mathbf B}=5-3+2=  4$, the $m_{\mathbf A}$-th compound matrix of ${\mathbf A}$ and the $m_{\mathbf B}$-th compound matrix of ${\mathbf B}$ are not defined. Hence, the uniqueness of the matrices $\mathbf A$ and $\mathbf B$ does not follow from Proposition \ref{ProFullUniq1}.
\end{example}
\begin{example}\label{ex:77710}
Experiments indicate that for random $7\times 10$ matrices $\mathbf A$ and $\mathbf B$, the matrix
$\mathbf A\odot\mathbf B$ has full column rank and that  condition  $\text{\textup{(U{\scriptsize 5})}}$ does not hold.
Namely,  the kernel of the $441\times 252$ matrix
$\mathcal C_5(\mathbf A)\odot \mathcal C_5(\mathbf B)$  is spanned by  a  vector $\hatdSmR{5}{R}$ associated with some
$\mathbf d\in\mathbb F^{10}$. Let $\mathbf C$ be a  $7\times 10$ matrix such that $\mathbf d\not\in\textup{range}(\mathbf C^T)$.
Then $\text{\textup{(W{\scriptsize 5})}}$ holds for the triplet $(\mathbf A,\mathbf B,\mathbf C)$. If additionally
 $k_{\mathbf C}\geq 5$, then \eqref{eq5.1} holds. Hence, by Proposition \ref{ProFullUniq1onematrixWm},
  $r_{\mathcal T}=10$ and the CPD of $\mathcal T=[\mathbf A,\mathbf B,\mathbf C]_{10}$ is unique.

The same situation occurs for tensors with other dimensions (see Table \ref{tab:condUmWm}).
\begin{table}[htbp]
\caption{
Some cases where the rank and  the uniqueness of the CPD of $\mathcal T=[\mathbf A,\mathbf B,\mathbf C]_R$
may be easily obtained from  Proposition \ref{ProFullUniq1onematrixWm} or its Corollary \ref{ProFullUniq1onematrix} (see Example \ref{ex:77710}).
Matrices $\mathbf A$, $\mathbf B$ and  $\mathbf C$ are generated randomly. Simulations indicate
that the dimensions of  $\mathbf A$ and $\mathbf B$ cause the dimension
of $\textup{ker}(\mathcal C_m(\mathbf A)\odot\mathcal C_m(\mathbf B))$ to be equal to $1$. Thus,
\cond{U}{m} and \cond{W}{m} may be easily checked.
}
\begin{center}\footnotesize
\begin{tabular}{|c|c|c|c|l|l|}
  \hline
   dimensions of  $\mathcal T$, &$r_{\mathcal T}=R$     & m=R-K+2 & dimensions of                                         &\cond{U}{m}  &\cond{W}{m}  \\
   $I\times J\times K$         &                       &         &$\mathcal C_m(\mathbf A)\odot\mathcal C_m(\mathbf B)$&             &\\
\hline
 $4\times 5\times 6$          & $7$                   & $3$     &$40\times 35$                                        &does not hold &holds \\
 $4\times 6\times 14$         & $14$                  & $2$     &$90\times 91$                                         &holds &holds  \\
 $5\times 7\times 7$          & $9$                   & $4$     &$175\times 216$                                         &does not hold &holds \\
 $6\times 9\times 8$          & $11$                  & $5$     &$756\times 462$                                         &does not hold &holds \\
 $7\times 7\times 7$          & $10$                  & $5$     &$441\times 252$                                         &does not hold &holds \\
     \hline
\end{tabular}
\end{center}
\label{tab:condUmWm}
\end{table}
\end{example}

\section{Application to tensors with symmetric frontal slices and  Indscal} \label{sect:symm}

In this section we consider  tensors with  symmetric frontal slices (SFS), which we will briefly call SFS-tensors. We are interested  in
PDs of which the rank-$1$ terms have the same symmetry. Such decompositions correspond to the INDSCAL model, as introduced by Carroll and Chang \cite{1970_Carroll_Chang}. A similar approach may be followed in the case of full symmetry.

We start with definitions of SFS-rank, SFS-PD, and SFS-CPD.

\begin{definition}\label{Def: outer product:indscal}
A third-order SFS-tensor $\mathcal T\in\mathbb F^{I\times I\times K}$ is {\em SFS-rank-$1$}
if it equals the outer product of three nonzero vectors $\mathbf a\in\mathbb F^I$,
$\mathbf a\in\mathbb F^I$ and $\mathbf c\in\mathbb F^K$.
\end{definition}

\begin{definition}
A {\em SFS-PD} of a third-order SFS-tensor $\mathcal T\in\mathbb F^{I\times I\times K}$ expresses $\mathcal T$ as a sum of  SFS-rank-$1$ terms:
\begin{equation}
\mathcal T=\sum\limits_{r=1}^R\mathbf a_r\circ \mathbf a_r\circ \mathbf c_r,
 \label{eqintro2:indscal}
\end{equation}
where $\mathbf a_r \in \mathbb F^{I}$,  $\mathbf c_r \in \mathbb F^{K}$, $1 \leq r \leq R$.
\end{definition}

\begin{definition}
The {\em SFS-rank} of a SFS-tensor $\mathcal T \in\mathbb F^{I\times I\times K}$ is defined
as the minimum number of SFS-rank-$1$ tensors in a PD  of  $\mathcal T$ and is denoted by $r_{SFS,\mathcal T}$.
\end{definition}

\begin{definition}
A {\em SFS-CPD}  of a third-order SFS-tensor $\mathcal T$ expresses $\mathcal T$ as a minimal sum of SFS-rank-$1$ terms.
\end{definition}

Note that $\mathcal T=[\mathbf A,\mathbf B,\mathbf C]_R$ is a SFS-CPD of $\mathcal T$ if and only if $\mathcal T$ is an SFS-tensor, $\mathbf A=\mathbf B$, and $R=r_{SFS,\mathcal T}$.

Now we can define  uniqueness of the SFS-CPD.

\begin{definition}\label{Def:1.5:indscal}
Let $\mathcal T$ be a SFS-tensor of SFS-rank $R$. The SFS-CPD  of  $\mathcal T$ is {\em unique} if
$\mathcal T=[\mathbf A,\mathbf A,\mathbf C]_R=[\bar{\mathbf A},\bar{\mathbf A},\bar{\mathbf C}]_R$ implies that there exist an $R\times R$ permutation matrix $\mathbf{\Pi}$ and $R\times R$ nonsingular diagonal matrices ${\mathbf \Lambda}_{\mathbf A}$ and ${\mathbf \Lambda}_{\mathbf C}$ such that
$$
\bar{\mathbf A}=\mathbf A\mathbf{\Pi}{\mathbf \Lambda}_{\mathbf A},\quad
\bar{\mathbf C}=\mathbf C\mathbf{\Pi}{\mathbf \Lambda}_{\mathbf C},\quad
{\mathbf \Lambda}_{\mathbf A}^2{\mathbf \Lambda}_{\mathbf C}=\mathbf I_R.
$$
\end{definition}
\begin{example}\label{ex:indscal}
Some SFS-tensors admit both SFS-CPDs and CPDs of which the terms are not partially symmetric. For instance,
consider the SFS-tensor $\mathcal T \in\mathbb R^{I\times I\times K}$ in which $\mathbf I_I$ is stacked $K$ times.
Let $\mathbf E$ denote the $K\times I$ matrix of which all entries are equal to one. Then
 $
 \mathcal T=[\mathbf X,(\mathbf X^{-1})^T,\mathbf E]_I,
 $
 is a CPD of $ \mathcal T$ for any nonsingular $I\times I$ matrix $\mathbf X$. On the other hand,
 $
 \mathcal T=[\mathbf A,\mathbf A,\mathbf E]_I,
 $
 is a SFS-CPD of $\mathcal T$ for any orthogonal $I\times I$ matrix $\mathbf A$.
\end{example}

The following result was obtained in \cite{tenBergSidRoccie2004}.
We present the proof for completeness.
\begin{lemma}\label{Lemma:5.13}
Let $\mathcal T$ be a SFS-tensor of rank $R$ and let the CPD of $\mathcal T$ be unique.
Then $r_{SFS,\mathcal T}=r_{\mathcal T}$, and the SFS-CPD  of  $\mathcal T$ is also unique.
\end{lemma}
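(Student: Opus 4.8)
The plan is to use the symmetry of the frontal slices to manufacture a \emph{second} ordinary CPD of $\mathcal T$ and then to feed both decompositions into the assumed uniqueness of the ordinary CPD. Fix a CPD $\mathcal T=[\mathbf A,\mathbf B,\mathbf C]_R$ with $R=r_{\mathcal T}$. Since the $k$-th frontal slice of $[\mathbf A,\mathbf B,\mathbf C]_R$ equals $\mathbf A\mathbf D_k\mathbf B^T$ for a diagonal matrix $\mathbf D_k$ built from the $k$-th row of $\mathbf C$, symmetry of all slices ($\mathbf A\mathbf D_k\mathbf B^T=\mathbf B\mathbf D_k\mathbf A^T$) is exactly the statement $[\mathbf A,\mathbf B,\mathbf C]_R=[\mathbf B,\mathbf A,\mathbf C]_R$. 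I would first record that $r_{SFS,\mathcal T}\geq r_{\mathcal T}$ holds trivially, since every SFS-PD is in particular a PD; the content of the lemma is therefore the reverse inequality together with uniqueness.

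Write $T_r=\mathbf a_r\circ\mathbf b_r\circ\mathbf c_r$ and let $\sigma(\mathbf a\circ\mathbf b\circ\mathbf c)=\mathbf b\circ\mathbf a\circ\mathbf c$ be the involution swapping the first two factors, so that the terms of $[\mathbf B,\mathbf A,\mathbf C]_R$ are the $\sigma(T_r)$. Essential uniqueness (Definition \ref{Def:1.5}) forces the two families of rank-$1$ terms to coincide, giving a permutation $\pi$ with $\sigma(T_r)=T_{\pi(r)}$ for all $r$. Applying $\sigma$ twice and using that the $T_r$ are pairwise non-proportional (a consequence of minimality $R=r_{\mathcal T}$), I get $T_{\pi^2(r)}=T_r$, hence $\pi^2=\textup{id}$, i.e. $\pi$ is an involution. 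I then split the terms by cycle type. At a fixed point $\pi(r)=r$ one has $\sigma(T_r)=T_r$, which forces $\mathbf b_r=\alpha_r\mathbf a_r$; absorbing $\alpha_r$ into $\mathbf c_r$ rewrites $T_r$ as a single SFS-rank-$1$ term. For a genuine $2$-cycle $\{r,s\}$ one has $T_s=\sigma(T_r)$, so $T_r+T_s=\mathbf a_r\circ\mathbf b_r\circ\mathbf c_r+\mathbf b_r\circ\mathbf a_r\circ\mathbf c_r$, and the symmetric rank-$2$ identity $T_r+T_s=\tfrac12(\mathbf a_r+\mathbf b_r)\circ(\mathbf a_r+\mathbf b_r)\circ\mathbf c_r-\tfrac12(\mathbf a_r-\mathbf b_r)\circ(\mathbf a_r-\mathbf b_r)\circ\mathbf c_r$ exhibits $T_r+T_s$ as a sum of two SFS-rank-$1$ terms; here $\mathbf a_r\pm\mathbf b_r\neq\vzero$ precisely because $\{r,s\}$ is a genuine $2$-cycle (proportionality of $\mathbf a_r$ and $\mathbf b_r$ would make $r$ a fixed point). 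Summing over all fixed points and $2$-cycles yields an SFS-PD with exactly $R$ terms, so $r_{SFS,\mathcal T}\leq R$ and therefore $r_{SFS,\mathcal T}=r_{\mathcal T}$.

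For uniqueness, suppose $\mathcal T=[\mathbf A,\mathbf A,\mathbf C]_R=[\bar{\mathbf A},\bar{\mathbf A},\bar{\mathbf C}]_R$ are two SFS-CPDs. Because $R=r_{SFS,\mathcal T}=r_{\mathcal T}$, both are ordinary CPDs, so uniqueness (Definition \ref{Def:1.5}) supplies a permutation $\mathbf{\Pi}$ and nonsingular diagonal matrices with $\bar{\mathbf A}=\mathbf A\mathbf{\Pi}{\mathbf \Lambda}_1$, $\bar{\mathbf A}=\mathbf A\mathbf{\Pi}{\mathbf \Lambda}_2$, $\bar{\mathbf C}=\mathbf C\mathbf{\Pi}{\mathbf \Lambda}_3$ and ${\mathbf \Lambda}_1{\mathbf \Lambda}_2{\mathbf \Lambda}_3=\mathbf I_R$. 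Comparing the first two relations and using that every column of $\mathbf A\mathbf{\Pi}$ is nonzero gives ${\mathbf \Lambda}_1={\mathbf \Lambda}_2=:{\mathbf \Lambda}_{\mathbf A}$; setting ${\mathbf \Lambda}_{\mathbf C}:={\mathbf \Lambda}_3$ then produces $\bar{\mathbf A}=\mathbf A\mathbf{\Pi}{\mathbf \Lambda}_{\mathbf A}$, $\bar{\mathbf C}=\mathbf C\mathbf{\Pi}{\mathbf \Lambda}_{\mathbf C}$ and ${\mathbf \Lambda}_{\mathbf A}^2{\mathbf \Lambda}_{\mathbf C}=\mathbf I_R$, which is exactly the conclusion of Definition \ref{Def:1.5:indscal}. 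The only delicate point, and the step I would check most carefully, is the reduction to the fully symmetric form $\mathbf p\circ\mathbf p\circ\mathbf c'$ over $\mathbb F=\mathbb R$: one cannot take a real square root of a negative coefficient, so the scalar factors from the fixed-point and $2$-cycle reductions must be pushed into the $\mathbf c$-factor (with sign) rather than split symmetrically, and one must verify that this bookkeeping keeps all vectors in $\mathbb F$ and nonzero. Everything else is a direct application of the ordinary uniqueness hypothesis.
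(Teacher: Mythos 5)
Your proof is correct, but it takes a genuinely different route through the middle of the argument than the paper does. Both proofs start from the same move: since the slices are symmetric, $[\mathbf A,\mathbf B,\mathbf C]_R=[\mathbf B,\mathbf A,\mathbf C]_R$, and Definition \ref{Def:1.5} applies to this pair of CPDs. The paper, however, then exploits the third relation $\mathbf C=\mathbf C\mathbf{\Pi}\mathbf{\Lambda}_{\mathbf C}$ together with the necessary condition \eqref{nec1} for CPD uniqueness (namely $k_{\mathbf C}\geq 2$): a nontrivial $\mathbf{\Pi}$ would make two distinct columns of $\mathbf C$ proportional, so $\mathbf{\Pi}=\mathbf{\Lambda}_{\mathbf C}=\mathbf I_R$, hence $\mathbf B=\mathbf A\mathbf{\Lambda}_{\mathbf A}$, and \emph{every} CPD of $\mathcal T$ is already an SFS-CPD after moving the scalars into the third mode; both conclusions then follow at once. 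You never invoke \eqref{nec1}: you allow the permutation $\pi$ to be a nontrivial involution and symmetrize the $2$-cycles with the polarization identity $\mathbf a\circ\mathbf b\circ\mathbf c+\mathbf b\circ\mathbf a\circ\mathbf c=\tfrac12(\mathbf a+\mathbf b)\circ(\mathbf a+\mathbf b)\circ\mathbf c-\tfrac12(\mathbf a-\mathbf b)\circ(\mathbf a-\mathbf b)\circ\mathbf c$, correctly pushing signs and scalars into the $\mathbf c$-factor so the construction stays inside $\mathbb F$ even when $\mathbb F=\mathbb R$, and correctly noting that $\mathbf a_r\pm\mathbf b_r\neq\vzero$ on a genuine $2$-cycle. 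What the paper's argument buys is brevity and the stronger intermediate fact that the permutation is forced to be trivial, so the $2$-cycle case never actually occurs (a posteriori your polarization step is vacuous); what your argument buys is self-containedness, since it needs nothing about uniqueness beyond Definition \ref{Def:1.5} itself and in particular does not rely on the necessary condition \eqref{nec1} imported from the literature. Your verification of $\pi^2=\mathrm{id}$ via pairwise non-proportionality of the terms, your fixed-point reduction $\mathbf b_r=\alpha_r\mathbf a_r$, and your final bookkeeping for SFS-CPD uniqueness (deducing $\mathbf{\Lambda}_1=\mathbf{\Lambda}_2$ from the nonzero columns of $\mathbf A\mathbf{\Pi}$, matching Definition \ref{Def:1.5:indscal}) are all sound; the last of these coincides with what the paper leaves implicit in its closing sentence.
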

\begin{proof}
Let $[\mathbf A,\mathbf B,\mathbf C]_R$ be a CPD of the SFS-tensor $\mathcal T$. Because of the symmetry we also have $\mathcal T=[\mathbf B,\mathbf A,\mathbf C]_R$. Since the CPD of
$\mathcal T$ is unique,
there exist an $R\times R$ permutation matrix $\mathbf{\Pi}$ and $R\times R$ nonsingular diagonal matrices
${\mathbf \Lambda}_{\mathbf A}$, ${\mathbf \Lambda}_{\mathbf B}$, and ${\mathbf \Lambda}_{\mathbf C}$ such that
$\mathbf B=\mathbf A\mathbf{\Pi}{\mathbf \Lambda}_{\mathbf A}$, $\mathbf A=\mathbf B\mathbf{\Pi}{\mathbf \Lambda}_{\mathbf B}$,
$\mathbf C=\mathbf C\mathbf{\Pi}{\mathbf \Lambda}_{\mathbf C}$ and $\Lambda_{\mathbf A}\Lambda_{\mathbf B}{\mathbf \Lambda}_{\mathbf C}=\mathbf I_R$.
Since the CPD is unique, by \eqref{nec1}, we have $k_{\mathbf C}\geq 2$. Hence, $\mathbf{\Pi}={\mathbf \Lambda}_{\mathbf C}=\mathbf I_R$ and
$\mathbf B=\mathbf A{\mathbf \Lambda}_{\mathbf A}$.
Thus, any CPD of $\mathcal T$ is in fact a SFS-CPD.
Hence, $r_{SFS,\mathcal T}=r_{\mathcal T}$, and the SFS-CPD  of  $\mathcal T$ is unique.
\end{proof}
\begin{remark}
To the authors' knowledge, it is still an open question whether there exist SFS-tensors with unique SFS-CPD but non-unique CPD.
\end{remark}

Lemma \ref{Lemma:5.13} implies that conditions guaranteeing uniqueness of SFS-CPD may be obtained from conditions guaranteeing uniqueness of CPD
by just ignoring the SFS-structure.  To illustrate this,
we present SFS-variants of Corollaries \ref{ProFullUniq1onematrixcor1} and \ref{Prop5.4}.

\begin{proposition}\label{ProFullUniq1onematrixcor1:indscal12}
Let $\mathcal T=[\mathbf A, \mathbf A, \mathbf C]_R$ and
$m_{\mathbf C}:=R-r_{\mathbf C}+2$.
Assume that
\begin{romannum}
\item
$k_{\mathbf A}+k_{\mathbf C}\geq R+2$;
\item
$\mathcal C_{m_{\mathbf C}}(\mathbf A)\odot \mathcal C_{m_{\mathbf C}}(\mathbf A)$ has full column rank.
\end{romannum}
Then $r_{SFS,\mathcal T}=R$ and the SFS-CPD of  tensor $\mathcal T$  is unique.
\end{proposition}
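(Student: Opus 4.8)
The plan is to deduce this SFS statement from its non-symmetric counterpart, Corollary \ref{ProFullUniq1onematrixcor1}, and then to pass from ordinary CPD uniqueness to SFS-CPD uniqueness via Lemma \ref{Lemma:5.13}. The guiding principle, established by that lemma, is that any sufficient condition for uniqueness of the ordinary CPD of an SFS-tensor automatically yields uniqueness of its SFS-CPD, so it suffices to verify that the hypotheses here imply those of Corollary \ref{ProFullUniq1onematrixcor1} with the choice $\mathbf B:=\mathbf A$.

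Concretely, I would first substitute $\mathbf B=\mathbf A$ into the two hypotheses of Corollary \ref{ProFullUniq1onematrixcor1} and check that they reduce exactly to hypotheses (i) and (ii) of the present proposition. Hypothesis (ii) of Corollary \ref{ProFullUniq1onematrixcor1} asks that $\mathcal C_{m_{\mathbf C}}(\mathbf A)\odot \mathcal C_{m_{\mathbf C}}(\mathbf B)$ have full column rank; with $\mathbf B=\mathbf A$ this is literally the matrix $\mathcal C_{m_{\mathbf C}}(\mathbf A)\odot \mathcal C_{m_{\mathbf C}}(\mathbf A)$ appearing in hypothesis (ii) here, so nothing is required. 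Hypothesis (i) of Corollary \ref{ProFullUniq1onematrixcor1} is condition \eqref{eq5.1}, namely
$\max(\min(k_{\mathbf A},k_{\mathbf B}-1),\ \min(k_{\mathbf A}-1,k_{\mathbf B}))+k_{\mathbf C}\geq R+1$.
Setting $\mathbf B=\mathbf A$ collapses both inner minima to $k_{\mathbf A}-1$, so the maximum equals $k_{\mathbf A}-1$ and \eqref{eq5.1} becomes $k_{\mathbf A}-1+k_{\mathbf C}\geq R+1$, i.e.\ $k_{\mathbf A}+k_{\mathbf C}\geq R+2$, which is precisely hypothesis (i) of the proposition.

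With both hypotheses of Corollary \ref{ProFullUniq1onematrixcor1} verified, that corollary gives $r_{\mathcal T}=R$ and uniqueness of the ordinary CPD of $\mathcal T$. Since $\mathcal T=[\mathbf A,\mathbf A,\mathbf C]_R$ is by construction an SFS-tensor, Lemma \ref{Lemma:5.13} then applies directly and yields $r_{SFS,\mathcal T}=r_{\mathcal T}=R$ together with uniqueness of the SFS-CPD, completing the argument.

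I do not anticipate a genuine obstacle here: the only substantive observation is the collapse of the symmetrized condition \eqref{eq5.1} to the clean inequality $k_{\mathbf A}+k_{\mathbf C}\geq R+2$, and this is elementary. The conceptual work was already done in Lemma \ref{Lemma:5.13}, whose role is exactly to let us harvest SFS uniqueness from the unconstrained results by ignoring the partial symmetry; the present proposition is essentially the instantiation of that mechanism on Corollary \ref{ProFullUniq1onematrixcor1}.
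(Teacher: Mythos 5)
Your proposal is correct and follows exactly the paper's own route: apply Corollary \ref{ProFullUniq1onematrixcor1} with $\mathbf B=\mathbf A$ to get rank $R$ and ordinary CPD uniqueness, then invoke Lemma \ref{Lemma:5.13} to transfer this to the SFS-CPD. Your explicit verification that condition \eqref{eq5.1} collapses to $k_{\mathbf A}+k_{\mathbf C}\geq R+2$ when $\mathbf B=\mathbf A$ is a detail the paper leaves implicit, but it is the same argument.
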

\begin{proof}
From Corollary  \ref{ProFullUniq1onematrixcor1} it follows that
$r_{\mathcal T}=R$ and that the CPD of  tensor $\mathcal T$  is unique.
The proof now follows from Lemma \ref{Lemma:5.13}.
\end{proof}
\begin{remark}
Under the additional assumption $r_{\mathbf C}=R$, Proposition \ref{ProFullUniq1onematrixcor1:indscal12}
was proved in \cite{Stegeman2009}.
\end{remark}
\begin{proposition}\label{ProFullUniq1onematrixcor1:indscal3}
Let $\mathcal T=[\mathbf A, \mathbf A, \mathbf C]_R$ and
$m_{\mathbf A}:=R-r_{\mathbf A}+2$.
Assume that
\begin{romannum}
\item
$k_{\mathbf A}+\max(\min(k_{\mathbf C}-1,k_{\mathbf A}),\min(k_{\mathbf C},k_{\mathbf A}-1))\geq R+1$;
\item
$\mathcal C_{m_{\mathbf A}}(\mathbf A)\odot \mathcal C_{m_{\mathbf A}}(\mathbf C)$ has full column rank.
\end{romannum}
Then $r_{SFS,\mathcal T}=R$ and the SFS-CPD of  tensor $\mathcal T$  is unique.
\end{proposition}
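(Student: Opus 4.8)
The plan is to follow the same two-step pattern used in the proof of Proposition~\ref{ProFullUniq1onematrixcor1:indscal12}: first establish that the \emph{ordinary} CPD of $\mathcal T$ is unique by invoking one of the overall-uniqueness results of \S\ref{Section uniqueness CPD}, and then transfer this to the SFS-CPD via Lemma~\ref{Lemma:5.13}. The only real work is to select the correct instance of Corollary~\ref{ProFullUniq1onematrixcor1} and to check that the hypotheses line up.

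The key observation is that, unlike in Proposition~\ref{ProFullUniq1onematrixcor1:indscal12}, where the compound matrices are built from the \emph{repeated} factor $\mathbf A$ and the index $m_{\mathbf C}$ is tied to $\mathbf C$, here condition~(ii) involves $m_{\mathbf A}=R-r_{\mathbf A}+2$ together with the compound matrices $\mathcal C_{m_{\mathbf A}}(\mathbf A)$ and $\mathcal C_{m_{\mathbf A}}(\mathbf C)$. Hence I would not apply Corollary~\ref{ProFullUniq1onematrixcor1} to $[\mathbf A,\mathbf A,\mathbf C]_R$ directly, but to a mode-permuted copy. Since CPD uniqueness (and the value of $r_{\mathcal T}$) is invariant under permutation of the three modes, I would regard $\mathcal T$ as $[\mathbf A,\mathbf C,\mathbf A]_R$ and apply Corollary~\ref{ProFullUniq1onematrixcor1} with the triple $(\mathbf A,\mathbf B,\mathbf C)$ of that corollary instantiated by $(\mathbf A,\mathbf C,\mathbf A)$. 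Under this assignment the corollary's third factor is $\mathbf A$, so its index $m_{\mathbf C}$ becomes $R-r_{\mathbf A}+2=m_{\mathbf A}$, and its full-column-rank hypothesis becomes exactly condition~(ii) above.

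It then remains to check that condition~(i) of the proposition coincides with condition~\eqref{eq5.1} for this role assignment. Writing out \eqref{eq5.1} with $(\mathbf A,\mathbf B,\mathbf C)$ replaced by $(\mathbf A,\mathbf C,\mathbf A)$ gives $\max(\min(k_{\mathbf A},k_{\mathbf C}-1),\ \min(k_{\mathbf A}-1,k_{\mathbf C}))+k_{\mathbf A}\geq R+1$; since $\min$ is symmetric in its two arguments, this is literally the inequality in~(i). Thus both hypotheses of Corollary~\ref{ProFullUniq1onematrixcor1} hold, yielding $r_{\mathcal T}=R$ and uniqueness of the (ordinary) CPD of $\mathcal T$. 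Applying Lemma~\ref{Lemma:5.13} then gives $r_{SFS,\mathcal T}=r_{\mathcal T}=R$ and uniqueness of the SFS-CPD, which completes the argument.

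I do not anticipate a genuine obstacle: the whole proof is bookkeeping built on results already established. The one point requiring a moment's care is recognizing that one must permute the modes so that the repeated factor $\mathbf A$ occupies the ``rank-determining'' third slot (producing the index $m_{\mathbf A}$) while the $\mathbf A$--$\mathbf C$ pair occupies the two ``compound'' slots; once this role assignment is fixed, the matching of hypothesis~(i) to~\eqref{eq5.1} is immediate from the symmetry of $\min$, and hypothesis~(ii) matches verbatim.
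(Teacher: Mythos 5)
Your proposal is correct and is essentially identical to the paper's own proof: the paper likewise applies Corollary~\ref{ProFullUniq1onematrixcor1} to the mode-permuted tensor $[\mathbf A,\mathbf C,\mathbf A]_R$ (so that the repeated factor $\mathbf A$ plays the third-mode role, giving the index $m_{\mathbf A}$) and then invokes Lemma~\ref{Lemma:5.13} to pass from uniqueness of the ordinary CPD to uniqueness of the SFS-CPD. Your verification that hypothesis~(i) is exactly condition~\eqref{eq5.1} under this role assignment is the bookkeeping the paper leaves implicit.
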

\begin{proof}
By Lemma \ref{Lemma:5.13} it is sufficient to show that
$r_{\mathcal T}=R$ and that the CPD of  tensor $\mathcal T$  is unique. Both these statements follow from
Corollary  \ref{ProFullUniq1onematrixcor1} applied to the tensor $[\mathbf A,\mathbf C,\mathbf A]_R$.
\end{proof}
\begin{proposition}\label{Prop5.4:indscal}
Let $\mathcal T=[\mathbf A, \mathbf A, \mathbf C]_R$, $m_{\mathbf A}=R-r_{\mathbf A}+2$ and   $m_{\mathbf C}=R-r_{\mathbf C}+2$.
Assume that the  matrices
\begin{gather}
\mathcal C_{m_{\mathbf A}}(\mathbf A)\odot \mathcal C_{m_{\mathbf A}}(\mathbf C), \label{cond1C:indscal}\\
\mathcal C_{m_{\mathbf C}}(\mathbf A)\odot \mathcal C_{m_{\mathbf C}}(\mathbf A)\label{cond3C:indscal}
\end{gather}
have full column rank.
Then $r_{SFS,\mathcal T}=R$ and the SFS-CPD of  tensor $\mathcal T$  is unique.
\end{proposition}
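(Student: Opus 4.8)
The plan is to reuse the reduction underlying the two preceding SFS-results (Propositions \ref{ProFullUniq1onematrixcor1:indscal12} and \ref{ProFullUniq1onematrixcor1:indscal3}): namely, to first prove that the \emph{ordinary} CPD of $\mathcal T=[\mathbf A,\mathbf A,\mathbf C]_R$ is unique with $r_{\mathcal T}=R$, and then to invoke Lemma \ref{Lemma:5.13} to transfer this conclusion to the SFS-CPD. The ordinary-CPD statement is exactly the conclusion of Corollary \ref{Prop5.4}, so the whole task reduces to checking that the hypotheses of Corollary \ref{Prop5.4} are met by the partially symmetric triplet $(\mathbf A,\mathbf A,\mathbf C)$.

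First I would specialize Corollary \ref{Prop5.4} by substituting $\mathbf B=\mathbf A$ into its three Khatri-Rao products. The key observation is that $\mathbf B=\mathbf A$ forces $r_{\mathbf B}=r_{\mathbf A}$, and hence $m_{\mathbf B}=m_{\mathbf A}$. Under this substitution the first matrix $\mathcal C_{m_{\mathbf A}}(\mathbf B)\odot\mathcal C_{m_{\mathbf A}}(\mathbf C)$ becomes precisely $\mathcal C_{m_{\mathbf A}}(\mathbf A)\odot\mathcal C_{m_{\mathbf A}}(\mathbf C)$, i.e.\ the matrix in \eqref{cond1C:indscal}, and the third matrix $\mathcal C_{m_{\mathbf C}}(\mathbf A)\odot\mathcal C_{m_{\mathbf C}}(\mathbf B)$ becomes precisely $\mathcal C_{m_{\mathbf C}}(\mathbf A)\odot\mathcal C_{m_{\mathbf C}}(\mathbf A)$, i.e.\ the matrix in \eqref{cond3C:indscal}. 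Thus the two full-column-rank assumptions of the Proposition are literally the first and third of the three matrices of Corollary \ref{Prop5.4}.

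With this identification the argument closes at once: at least two of the three matrices of Corollary \ref{Prop5.4} have full column rank, so that corollary delivers $r_{\mathcal T}=R$ together with uniqueness of the ordinary CPD of $\mathcal T$. Since $[\mathbf A,\mathbf A,\mathbf C]_R$ is then a SFS-tensor of rank $R$ whose CPD is unique, Lemma \ref{Lemma:5.13} yields $r_{SFS,\mathcal T}=r_{\mathcal T}=R$ and uniqueness of the SFS-CPD, as required.

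I do not expect a genuine obstacle here: the entire content is the index bookkeeping $m_{\mathbf B}\mapsto m_{\mathbf A}$ in the second paragraph. In fact one could observe more: because $\mathbf X\odot\mathbf Y$ and $\mathbf Y\odot\mathbf X$ differ only by a fixed row permutation and hence share the same column rank, the single assumption \eqref{cond1C:indscal} already makes both the first and the second matrix of Corollary \ref{Prop5.4} of full column rank, so that \eqref{cond3C:indscal} is strictly speaking redundant. I would not lean on this sharper remark, however, since the plan as described already yields the stated result with no extra work.
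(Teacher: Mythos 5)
Your proof is correct and is exactly the paper's argument: specialize Corollary \ref{Prop5.4} to $\mathbf B=\mathbf A$ (your two hypotheses are precisely the first and third matrices in \eqref{cond13C}) and then transfer uniqueness of the ordinary CPD to the SFS-CPD via Lemma \ref{Lemma:5.13}. Your closing remark that \eqref{cond3C:indscal} is redundant---because $\mathcal C_{m_{\mathbf A}}(\mathbf A)\odot\mathcal C_{m_{\mathbf A}}(\mathbf C)$ and $\mathcal C_{m_{\mathbf A}}(\mathbf C)\odot\mathcal C_{m_{\mathbf A}}(\mathbf A)$ differ only by a fixed row permutation, so \eqref{cond1C:indscal} alone already supplies two of the three matrices required by Corollary \ref{Prop5.4}---is also sound, though, as you note, not needed for the stated result.
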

\begin{proof}
From Corollary  \ref{Prop5.4} it follows that
$r_{\mathcal T}=R$ and that the CPD of  tensor $\mathcal T$  is unique.
The proof now follows from Lemma \ref{Lemma:5.13}.
\end{proof}
\section{Uniqueness beyond $\text{\textup{(W{\scriptsize m})}}$} \label{Appendix Extra}

In this section we discuss an example in which even condition $\text{\textup{(W{\scriptsize m})}}$ is not satisfied. Hence, CPD uniqueness does not follow from Proposition \ref{prmostgeneraldis} or Proposition \ref{Prop:1.16}. A fortiori, it does not follow from Proposition   \ref{ProFullUniq1onematrixWm}, Corollaries \ref{ProFullUniq1onematrix}--\ref{ProFullUniq1onematrixcor1},  Proposition \ref{ProFullUniq1}, and  Corollaries \ref{Prop5.4}--\ref{corr:1.30}. We show that uniqueness of the CPD can nevertheless be demonstrated by combining subresults.

In this section
we will denote by $\omega(\mathbf d)$ the number of nonzero components of $\mathbf d$ and we will
 write $\mathbf a\parallel\mathbf b$ if the vectors $\mathbf a$ and $\mathbf b$ are collinear, that is there exists a nonzero number $c\in\mathbb F$ such that $\mathbf a=c\mathbf b$.

For easy reference we include the following lemma concerning second compound matrices.
\begin{lemma}\label{BCformula_and_diag} \cite[Lemma 2.4 (1) and Lemma 2.5]{Part I}
\begin{itemize}
\item[\textup{(1)}]
Let the product $\mathbf X\mathbf Y\mathbf Z$ be defined. Then the product $\mathcal C_2(\mathbf X)\mathcal C_2(\mathbf Y)\mathcal C_2(\mathbf Z)$
is also defined and
$$
\mathcal C_2(\mathbf X\mathbf Y\mathbf Z)=\mathcal C_2(\mathbf X)\mathcal C_2(\mathbf Y)\mathcal C_2(\mathbf Z).
$$
\item[\textup{(2)}]
Let $\mathbf d=\left[\begin{matrix}d_1&d_2&\dots&d_R\end{matrix}\right]\in\mathbb F^R$.
Then $\mathcal C_2(\textup{\text{Diag}}(\mathbf d))=\textup{\text{Diag}}(\hatdSmR{2}{R})$.

In particular, $\omega(\mathbf d)\leq 1$ if and only if $\hatdSmR{2}{R}=\vzero$ if and only if  $\mathcal C_2(\textup{\text{Diag}}(\mathbf d))=\vzero$.
\end{itemize}
\end{lemma}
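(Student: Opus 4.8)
The plan is to prove the two parts separately, each by elementary means, and then read off the ``in particular'' equivalences.

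For part (1), I would invoke the Cauchy--Binet formula. Recall from Definition \ref{defcompound} that the $(\mathcal I,\mathcal J)$ entry of $\mathcal C_2(\mathbf A)$ is the $2\times 2$ minor of $\mathbf A$ obtained by selecting the rows indexed by $\mathcal I$ and the columns indexed by $\mathcal J$, with $\mathcal I$ and $\mathcal J$ ranging over all two-element index sets in lexicographic order. For a product $\mathbf P\mathbf Q$ with compatible dimensions, the Cauchy--Binet identity expresses each such minor as
$$
\det\bigl((\mathbf P\mathbf Q)[\mathcal I,\mathcal J]\bigr)=\sum_{\mathcal K}\det\bigl(\mathbf P[\mathcal I,\mathcal K]\bigr)\,\det\bigl(\mathbf Q[\mathcal K,\mathcal J]\bigr),
$$
where the sum runs over all two-element index sets $\mathcal K$. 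The right-hand side is precisely the $(\mathcal I,\mathcal J)$ entry of the matrix product $\mathcal C_2(\mathbf P)\mathcal C_2(\mathbf Q)$, so $\mathcal C_2(\mathbf P\mathbf Q)=\mathcal C_2(\mathbf P)\mathcal C_2(\mathbf Q)$ whenever $\mathbf P\mathbf Q$ is defined, and a dimension count ($\mathcal C_2$ sends an $a\times b$ matrix to a $C^2_a\times C^2_b$ matrix) shows the compound factors are conformable. Applying this twice through $\mathcal C_2(\mathbf X\mathbf Y\mathbf Z)=\mathcal C_2((\mathbf X\mathbf Y)\mathbf Z)=\mathcal C_2(\mathbf X\mathbf Y)\mathcal C_2(\mathbf Z)=\mathcal C_2(\mathbf X)\mathcal C_2(\mathbf Y)\mathcal C_2(\mathbf Z)$ yields the claim.

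For part (2), I would compute the $2\times 2$ minors of $\textup{Diag}(\mathbf d)$ directly. Consider the submatrix on rows $\{i_1,i_2\}$ and columns $\{j_1,j_2\}$, ordered so that $i_1<i_2$ and $j_1<j_2$. Because all off-diagonal entries of $\textup{Diag}(\mathbf d)$ vanish, the anti-diagonal product of this submatrix is nonzero only if $i_1=j_2$ and $i_2=j_1$, which is impossible under $i_1<i_2$ and $j_1<j_2$; hence the minor equals the product of its diagonal entries, namely $d_{i_1}d_{i_2}$ when $\{i_1,i_2\}=\{j_1,j_2\}$ and $0$ otherwise. Therefore $\mathcal C_2(\textup{Diag}(\mathbf d))$ is diagonal, and reading off its diagonal in lexicographic order reproduces exactly the vector $\hatdSmR{2}{R}$ of \eqref{eqd_big_product}; that is, $\mathcal C_2(\textup{Diag}(\mathbf d))=\textup{Diag}(\hatdSmR{2}{R})$.

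The ``in particular'' equivalences then follow by inspection. By the displayed identity of part (2), the condition $\mathcal C_2(\textup{Diag}(\mathbf d))=\vzero$ is the same as $\textup{Diag}(\hatdSmR{2}{R})=\vzero$, i.e.\ $\hatdSmR{2}{R}=\vzero$. In turn, $\hatdSmR{2}{R}=\vzero$ means every product $d_{i_1}d_{i_2}$ with $i_1<i_2$ vanishes, which holds precisely when no two components of $\mathbf d$ are simultaneously nonzero, i.e.\ when $\omega(\mathbf d)\leq 1$. I do not expect any genuine obstacle here: both statements are standard facts about compound matrices. The only points deserving care are the conformability bookkeeping and the invocation of Cauchy--Binet in part (1), and the matching of the lexicographic ordering of index sets with the ordering of the entries of $\hatdSmR{2}{R}$ in part (2), both of which are routine.
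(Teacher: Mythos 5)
Your proof is correct and follows essentially the same route as the source: this lemma is quoted in the paper from Part I without a new proof, and the argument there rests on exactly the ingredients you use — the Cauchy--Binet formula for the multiplicativity $\mathcal C_2(\mathbf P\mathbf Q)=\mathcal C_2(\mathbf P)\mathcal C_2(\mathbf Q)$, applied twice, and the direct computation of the $2\times 2$ minors of $\textup{Diag}(\mathbf d)$ showing the compound is $\textup{Diag}(\hatdSmR{2}{R})$, from which the equivalence with $\omega(\mathbf d)\leq 1$ is immediate.
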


\begin{example}
Let $\mathcal T_\alpha=[\mathbf A,\mathbf B,\mathbf C]_5$, where
$$
\mathbf A=
\left[
\begin{matrix}
0&	\alpha&	0&	0&	0\\
1&	0&	1&	0&	0\\
1&	0&	0&	1&	0\\
0&	0&	0&	0&	1
\end{matrix}
\right],\
\mathbf B=
\left[
\begin{matrix}
0&	1&	0&	0&	0\\
1&	0&	1&	0&	0\\
0&	0&	0&	1&	0\\
1&	0&	0&	0&	1
\end{matrix}
\right],\
\mathbf C=
\left[
\begin{matrix}
1&	1&	0&	0&	0\\
0&	0&	1&	0&	0\\
0&	0&	0&	1&	0\\
1&	0&	0&	0&	1
\end{matrix}
\right],\ \alpha\ne 0.
$$
Then
$r_{\mathbf A}=r_{\mathbf B}=r_{\mathbf C}=4$, $k_{\mathbf A}=k_{\mathbf B}=k_{\mathbf C}=2$, and $m:=m_{\mathbf A}=m_{\mathbf B}=m_{\mathbf C}=5-4+2=3$.
One can  check that none of the triplets $(\mathbf A,\mathbf B,\mathbf C)$, $(\mathbf B,\mathbf C,\mathbf A)$, $(\mathbf C,\mathbf A,\mathbf B)$
satisfies condition $\text{\textup{(W{\scriptsize m})}}$. Hence, the rank and the uniqueness of the factor matrices of $\mathcal T_\alpha$  do not follow from Proposition \ref{prmostgeneraldis} or Proposition \ref{Prop:1.16}.
We prove that $r_{\mathcal T_{\alpha}}=5$ and that the CPD  $\mathcal T_\alpha=[\mathbf A,\mathbf B,\mathbf C]_5$   is  unique.
\begin{romannum}
\item A trivial verification shows that
\begin{align}
&\mathbf A\odot \mathbf B,\quad
\mathbf B\odot \mathbf C,\quad
\mathbf C\odot \mathbf A, &\text{have full column rank,}\\
&\mathcal C_2(\mathbf A)\odot \mathcal C_2(\mathbf B),\quad
\mathcal C_2(\mathbf B)\odot \mathcal C_2(\mathbf C),\quad
\mathcal C_2(\mathbf C)\odot \mathcal C_2(\mathbf A)  &\text{have full column rank.}\label{eq7.12}
\end{align}
Elementary algebra yields
\begin{alignat}{4}
\omega(\mathbf A^T\mathbf x)=1  &\Leftrightarrow \ \mathbf x&\parallel\mathbf e_1^4\ &\text{ or }&\ \mathbf x&\parallel\mathbf e_4^4,\label{eq7.4a}\\
\omega(\mathbf B^T\mathbf y)=1  &\Leftrightarrow \ \mathbf y&\parallel\mathbf e_1^4\ &\text{ or }&\ \mathbf y&\parallel\mathbf e_3^4,\label{eq7.4b}\\
\omega(\mathbf C^T\mathbf z)=1  &\Leftrightarrow \ \mathbf z&\parallel\mathbf e_2^4\ &\text{ or }&\ \mathbf z&\parallel\mathbf e_3^4.\label{eq7.4c}
\end{alignat}
\item
Consider a CPD $\mathcal T_{\alpha}=[\bar{\mathbf A},\bar{\mathbf B},\bar{\mathbf C}]_{\bar R}$, i.e. ${\bar R}=r_{\mathcal T_{\alpha}}$ is minimal. We have  ${\bar R}\leq 5$.
For later use we show that any three solutions of the equation  $\omega(\bar{\mathbf C}^T\mathbf z)=1$ are linearly dependent. Indeed, assume that there exist three vectors $\mathbf z_1,\ \mathbf z_2,\ \mathbf z_3\in\mathbb F^4$ such that
$\omega(\bar{\mathbf C}^T\mathbf z_1)=\omega(\bar{\mathbf C}^T\mathbf z_2)=\omega(\bar{\mathbf C}^T\mathbf z_3)=1$.
 By \eqref{eqT_V}--\eqref{eqT_V2},
\begin{align}
\mathbf T^{(1)}&=(\bar{\mathbf A}\odot\bar{\mathbf B})\bar{\mathbf C}^T=(\mathbf A\odot\mathbf B)\mathbf C^T,\label{matrreprdisc1}\\
\mathbf T^{(2)}&=(\bar{\mathbf B}\odot\bar{\mathbf C})\bar{\mathbf A}^T=(\mathbf B\odot\mathbf C)\mathbf A^T,\label{matrreprdisc12}\\
\mathbf T^{(3)}&=(\bar{\mathbf C}\odot\bar{\mathbf A})\bar{\mathbf B}^T=(\mathbf C\odot\mathbf A)\mathbf B^T. \label{matrreprdisc13}
\end{align}
From \eqref{matrreprdisc1}  it follows that
$\mathbf A\textup{Diag}(\mathbf C^T\mathbf z_i)\mathbf B^T=\bar{\mathbf A}\textup{Diag}(\bar{\mathbf C}^T\mathbf z_i)\bar{\mathbf B}^T$, and hence, by
Lemma \ref{BCformula_and_diag} (1),
$$
\mathcal C_2(\mathbf A)\mathcal C_2(\textup{Diag}(\mathbf C^T\mathbf z_i))\mathcal C_2(\mathbf B^T)=
\mathcal C_2(\bar{\mathbf A})\mathcal C_2(\textup{Diag}(\bar{\mathbf C}^T\mathbf z_i))\mathcal C_2(\bar{\mathbf B}^T)=\mzero,\qquad i\in\{1,2,3\},
$$
which can also be expressed as
$$
\left[\mathcal C_2(\mathbf A)\odot\mathcal C_2(\mathbf B)\right]\hatdSmR{2}{R}_i=\vzero,\qquad
\mathbf d_i:=\mathbf C^T\mathbf z_i,\qquad i\in\{1,2,3\}.
$$
By \eqref{eq7.12},
$\hatdSmR{2}{R}_i=\vzero$ for $i\in\{1,2,3\}$.
Since $\mathbf C^T$ has full column rank,  Lemma  \ref{BCformula_and_diag} (2) implies that
$\omega(\mathbf C^T\mathbf z_1)=\omega(\mathbf C^T\mathbf z_2)=\omega(\mathbf C^T\mathbf z_3)=1$. From \eqref{eq7.4c} it follows that
at least two of the vectors $\mathbf z_1$, $\mathbf z_2$, and $\mathbf z_3$ are collinear. Hence, the vectors
$\mathbf z_1$, $\mathbf z_2$, and $\mathbf z_3$ are linearly dependent.
\item
Since $\mathbf A\odot \mathbf B$ and $\mathbf C^T$ have full column rank, from \eqref{matrreprdisc1} and Sylvester's rank inequality it follows that
$$
r_{\bar{\mathbf C}^T}\geq r_{(\bar{\mathbf A}\odot\bar{\mathbf B})\bar{\mathbf C}^T}=
r_{({\mathbf A}\odot{\mathbf B}){\mathbf C}^T}\geq r_{{\mathbf A}\odot{\mathbf B}}+r_{{\mathbf C}^T}-5=5+4-5=4.
$$
In a similar fashion, from \eqref{matrreprdisc12} and \eqref{matrreprdisc13} we obtain
$r_{\bar{\mathbf A}^T}\geq 4$ and $r_{\bar{\mathbf B}^T}\geq 4$, respectively. We conclude that ${\bar R}\geq r_{\bar{\mathbf A}}=r_{\bar{\mathbf B}}=r_{\bar{\mathbf C}}=4$.

Let us show that ${\bar R}=5$. To obtain a contradiction, assume that ${\bar R}=4$. In this case, since $r_{\bar{\mathbf C}^T}=4$, $\bar{\mathbf C}^T$ is a nonsingular square matrix.
Then the columns of $\mathbf Z:=(\bar{\mathbf C}^T)^{-1}$ are linearly independent solutions of the equation
$\omega(\bar{\mathbf C}^T\mathbf z)=1$, which is a contradiction with $\textup{(ii)}$. Hence, ${\bar R}=5$.
\item
Let us show that $k_{\bar{\mathbf C}}\geq 2$. Conversely, assume that  $k_{\bar{\mathbf C}}=1$.
Since $r_{\bar{\mathbf C}}=4$, it follows that there exists exactly one pair of proportional columns of $\bar{\mathbf C}$. Without loss of generality we will assume that
$\bar{\mathbf C}_4\parallel\bar{\mathbf C}_5$. Hence,
$\mathbb F^4=\textup{range}(\bar{\mathbf C})=\textup{span}\{\bar{\mathbf C}_1,\bar{\mathbf C}_2,\bar{\mathbf C}_3,\bar{\mathbf C}_4\}$.
Let $\left[\begin{matrix}\mathbf z_1&\mathbf z_2&\mathbf z_3&\mathbf z_4\end{matrix}\right]:=(\left[\begin{matrix}\bar{\mathbf C}_1&\bar{\mathbf C}_2&\bar{\mathbf C}_3&\bar{\mathbf C}_4\end{matrix}\right]^T)^{-1}$.
Then
$\omega(\bar{\mathbf C}^T\mathbf z_1)=\omega(\bar{\mathbf C}^T\mathbf z_2)=\omega(\bar{\mathbf C}^T\mathbf z_3)=1$, which is a contradiction with $\textup{(ii)}$.

In a similar fashion we can prove that $k_{\bar{\mathbf A}}\geq 2$ and $k_{\bar{\mathbf B}}\geq 2$. Thus, $\min (k_{\bar{\mathbf A}}, k_{\bar{\mathbf B}},k_{\bar{\mathbf C}})\geq 2$.
\item Assume that
 there exist indices $i,j,k,l$ and nonzero values $t_1,t_2,t_3,t_4$ such that
\begin{equation}\label{matrreprdisc2}
(\bar{\mathbf A}^T)_1=t_1\mathbf e_i^5,\quad
(\bar{\mathbf A}^T)_4=t_2\mathbf e_j^5,\quad
(\bar{\mathbf B}^T)_1=t_3\mathbf e_k^5,\quad
(\bar{\mathbf B}^T)_3=t_4\mathbf e_l^5.
\end{equation}
Here we show that \eqref{matrreprdisc2} implies the
uniqueness of the CPD of $\mathcal T_{\alpha}$ and a fortiori the uniqueness of the third factor matrix. The latter implication will as such be instrumental in the proof of \textup{(vi)}. That assumption  \eqref{matrreprdisc2} really holds, and thus implies CPD uniqueness, will be demonstrated in \textup{(vii)}.

Combination of \eqref{matrreprdisc12}, \eqref{matrreprdisc13}, and \eqref{matrreprdisc2} yields
\begin{align*}
&\alpha\mathbf b_2\otimes\mathbf c_2 =t_1\bar{\mathbf b}_i\otimes\bar{\mathbf c}_i,&
&\mathbf c_2\otimes\mathbf a_2 =t_3\bar{\mathbf c}_k\otimes\bar{\mathbf a}_k,&\\
&\ \ \mathbf b_5\otimes\mathbf c_5 =t_2\bar{\mathbf b}_j\otimes\bar{\mathbf c}_j,&
&\mathbf c_4\otimes\mathbf a_4 =t_4\bar{\mathbf c}_l\otimes\bar{\mathbf a}_l.&
\end{align*}
We see that $\mathbf b_2\parallel\bar{\mathbf b}_i$. Also, $\mathbf c_2\parallel\bar{\mathbf c}_i$ and  $\mathbf c_2\parallel\bar{\mathbf c}_k$.
Since $k_{\bar{\mathbf C}}\geq 2$, it follows that $i=k$. Therefore, also
$\mathbf a_2\parallel\bar{\mathbf a}_i$.
It is now clear that  $[\mathbf a_2,\mathbf b_2,\mathbf c_2]_1-[\bar{\mathbf a}_i,\bar{\mathbf b}_i,\bar{\mathbf c}_i]_1=\beta[\mathbf e_1^4,\mathbf e_1^4,\mathbf e_1^4]$ for some $\beta\in\mathbb F$.
Let
$$
\mathcal T_\beta:=
\mathcal T_\alpha-[\mathbf a_2,\mathbf b_2,\mathbf c_2]_1+\beta[\mathbf e_1^4,\mathbf e_1^4,\mathbf e_1^4]=
[\bar{\mathbf A},\bar{\mathbf B},\bar{\mathbf C}]_5-[\bar{\mathbf a}_i,\bar{\mathbf b}_i,\bar{\mathbf c}_i]_1.
$$
Obviously, $\mathcal T_\beta$ is rank-$4$.
We claim that $\beta=0$. Indeed, if $\beta\ne 0$, then repeating steps \textup{(i)}--\textup{(iii)} for $\mathcal T_\alpha$ replaced by $\mathcal T_\beta$ we obtain that
$\mathcal T_\beta$ is rank-$5$, which is a contradiction.
Hence,
$[\mathbf a_2,\mathbf b_2,\mathbf c_2]_1=[\bar{\mathbf a}_i,\bar{\mathbf b}_i,\bar{\mathbf c}_i]_1$.

What is left to show, is that the CPD of the rank-$4$ tensor $\mathcal T_\alpha-[\mathbf a_2,\mathbf b_2,\mathbf c_2]_1$ is unique.
Note that the matrix
$\left[\begin{matrix}\mathbf c_1&\mathbf c_3&\mathbf c_4&\mathbf c_5\end{matrix}\right]$ has full column rank.
From \eqref{eq7.12} it follows that
$\mathcal C_2(\left[\begin{matrix}\mathbf a_1&\mathbf a_3&\mathbf a_4&\mathbf a_5\end{matrix}\right])
\odot \mathcal C_2( \left[\begin{matrix}\mathbf b_1&\mathbf b_3&\mathbf b_4&\mathbf b_5\end{matrix}\right])$ also has full column rank. Hence, by Proposition \ref{K2C2U2unique}, the CPD of\ $\mathcal T_\alpha-[\mathbf a_2,\mathbf b_2,\mathbf c_2]_1$ is unique.

\item  Let us show that $k_{\bar{\mathbf A}}=k_{\bar{\mathbf B}}=k_{\bar{\mathbf C}}= 2$. Conversely, assume that $k_{\bar{\mathbf C}}\geq 3$.
Then
$r_{\bar{\mathbf B}}+k_{\bar{\mathbf C}}\geq 4+3\geq R+2$.
Recall from (iv) that $k_{\bar{\mathbf B}}\geq 2$.
Hence, condition (\text{K{\scriptsize 2}}) holds for
$\bar{\mathbf B},\bar{\mathbf C}$.
By Proposition \ref{Prop:KCUW} (1),
\begin{equation} \label{B.6}
\mathcal C_2(\bar{\mathbf B})\odot\mathcal C_2(\bar{\mathbf C}) \text{ has full column rank.}
\end{equation}
Let $\mathbf x\in\mathbb F^4$. From \eqref{matrreprdisc12} it follows that
$$
\mathbf B\textup{Diag}(\mathbf A^T\mathbf x)\mathbf C^T=\bar{\mathbf B}\textup{Diag}(\bar{\mathbf A}^T\mathbf x)\bar{\mathbf C}^T,
$$
Hence, by
 Lemma \ref{BCformula_and_diag} (1),
\begin{equation*}
\mathcal C_2(\mathbf B)\mathcal C_2(\textup{Diag}(\mathbf A^T\mathbf x))\mathcal C_2(\mathbf C^T)=
\mathcal C_2(\bar{\mathbf B})\mathcal C_2(\textup{Diag}(\bar{\mathbf A}^T\mathbf x))\mathcal C_2(\bar{\mathbf C}^T),
\end{equation*}
which can also be expressed as
\begin{equation}
\left[\mathcal C_2(\mathbf B)\odot\mathcal C_2(\mathbf C)\right]\widehat{\mathbf d}^2_{\mathbf A}=
\left[\mathcal C_2(\bar{\mathbf B})\odot\mathcal C_2(\bar{\mathbf C})\right]\widehat{\mathbf d}^2_{\bar{\mathbf A}},\label{eqappb1}
\end{equation}
where
$\mathbf d_{\mathbf A}=\mathbf A^T\mathbf x$ and
$\mathbf d_{\bar{\mathbf A}}=\bar{\mathbf A}^T\mathbf x$.
From \eqref{eq7.12}, \eqref{B.6}, and  Lemma  \ref{BCformula_and_diag} (2) it follows that
\begin{equation}
\begin{split}
\omega(\mathbf A^T\mathbf x)=1&\xLeftrightarrow{\text{Lemma  \ref{BCformula_and_diag} (2)}}
 \widehat{\mathbf d}^2_{\mathbf A}=\vzero\xLeftrightarrow{\eqref{eq7.12}, \eqref{B.6},\eqref{eqappb1}}
  \widehat{\mathbf d}^2_{\bar{\mathbf A}}=\vzero\\
  &\xLeftrightarrow{\text{Lemma  \ref{BCformula_and_diag} (2)}}
  \omega(\bar{\mathbf A}^T\mathbf x)=1.\label{eqappb5}
  \end{split}
\end{equation}
In a similar fashion we can prove that for $\mathbf y\in\mathbb F^4$,
\begin{equation}
\omega(\mathbf B^T\mathbf y)=1\Leftrightarrow
  \omega(\bar{\mathbf B}^T\mathbf y)=1.\label{eqappb6}
\end{equation}
Therefore, by \textup{(i)}, there exist indices $i,j,k,l$ and nonzero values $t_1,t_2,t_3,t_4$ such that
\eqref{matrreprdisc2} holds. It follows from step $\textup{(v)}$ that the matrices $\mathbf C$ and $\bar{\mathbf C}$ are the same up
to permutation and column scaling. Hence, $k_{\bar{\mathbf C}}=k_{\mathbf C}=2$, which is a contradiction with $k_{\bar{\mathbf C}}\geq 3$. We conclude that $k_{\bar{\mathbf C}} < 3$. On the other hand, we have from (iv) that $k_{\bar{\mathbf C}} \geq 2$. Hence, $k_{\bar{\mathbf C}} = 2$.

In a similar fashion we can prove that $k_{\bar{\mathbf A}}=k_{\bar{\mathbf B}}=2$.

\item Since $k_{\bar{\mathbf A}}=k_{\bar{\mathbf B}}=2$, both $\bar{\mathbf A}$ and ${\bar{\mathbf B}}$ have a rank-deficient $4 \times 3$ submatrix.
 Since $r_{\bar{\mathbf A}}=r_{\bar{\mathbf B}}=4$, it follows that there exist vectors $\mathbf x_1$, $\mathbf x_2$, $\mathbf y_1$, $\mathbf y_2$ such that
    $$
    \omega(\bar{\mathbf A}^T\mathbf x_1)=\omega(\bar{\mathbf A}^T\mathbf x_2)=
    \omega(\bar{\mathbf B}^T\mathbf y_1)=\omega(\bar{\mathbf B}^T\mathbf y_2)=1,\qquad
    \mathbf x_1\not\parallel\mathbf x_2,\qquad
    \mathbf y_1\not\parallel\mathbf y_2.
    $$
From \eqref{eqappb1}--\eqref{eqappb6} it follows  that $\omega(\mathbf A^T\mathbf x_1)=\omega(\mathbf A^T\mathbf x_2)=
    \omega(\mathbf B^T\mathbf y_1)=\omega(\mathbf B^T\mathbf y_2)=1$.  By \eqref{eq7.4a}--\eqref{eq7.4b} there exist
    indices $i,j,k,l$ and nonzero values $t_1,t_2,t_3,t_4$ such that
\eqref{matrreprdisc2} holds. Hence, by \textup{(v)}, the CPD of $\mathcal T_\alpha$ is unique.
\end{romannum}
\end{example}

\section{Generic uniqueness}\label{Section6}
\subsection{Generic uniqueness of unconstrained CPD}\label{sec:last1}

 It was explained in \cite{DeLathauwer2006, Psycho2006} that the conditions
$r_{\mathbf C}=R$ and (\textup{C{\scriptsize 2}}) in Proposition \ref{K2C2U2unique}
hold generically when they hold for one particular choice of $\mathbf A$, $\mathbf B$ and $\mathbf C$. It was indicated that this implies that the CPD of an $I\times J\times K$ tensor $\mathcal T=[\mathbf A, \mathbf B, \mathbf C]_R$
is generically unique whenever $K\geq R$ and $C^2_IC^2_J\geq C^2_R$.
These conditions guarantee that the matrix $\mathbf C$ generically has full column rank and that the number of columns of
the $C^2_IC^2_J\times C^2_R$ matrix $\mathcal C_{2}(\mathbf A)\odot \mathcal C_{2}(\mathbf B)$ does not exceed its number of rows. In this subsection we draw conclusions for the generic case from the more general  Proposition \ref{Prop:1.16} and Corollary \ref{ProFullUniq1onematrixcor1}.

As in \cite{DeLathauwer2006, Psycho2006}, our proofs are based on the following lemma.

\begin{lemma}\label{Lemmaanalituni}
Let $f(\mathbf x)$ be an analytic function of  $\mathbf x \in\mathbb F^n$ and let $\mu_n$ be the Lebesgue measure on $\mathbb F^n$.
If $\mu_n\{\mathbf x:\ f(\mathbf x)=0\}>0$, then $f\equiv 0$.
\end{lemma}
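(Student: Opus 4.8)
The plan is to prove the contrapositive: assuming $f \not\equiv 0$, I show that the zero set $Z := \{\mathbf x \in \mathbb F^n : f(\mathbf x) = 0\}$ satisfies $\mu_n(Z) = 0$. The first step is to collapse both admissible fields into a single real-analytic statement. When $\mathbb F = \mathbb R$, $f$ is already real-analytic on $\mathbb R^n$. When $\mathbb F = \mathbb C$, I identify $\mathbb C^n$ with $\mathbb R^{2n}$ and observe that a holomorphic $f$ has real part $u = \mathrm{Re}\,f$ and imaginary part $v = \mathrm{Im}\,f$ that are real-analytic on $\mathbb R^{2n}$ (substitute $z_j = x_j + i y_j$ into the local power series). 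Since $Z \subseteq \{u = 0\}$ and $f \not\equiv 0$ forces $u \not\equiv 0$ or $v \not\equiv 0$, it suffices to bound the measure of the zero set of a single nontrivial real-analytic function. Everything therefore reduces to the claim: a real-analytic $g \not\equiv 0$ on $\mathbb R^N$ has a Lebesgue-null zero set $Z_g$.

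I would prove this claim by induction on $N$. For $N = 1$, a nonzero real-analytic function on the connected domain $\mathbb R$ has only isolated zeros: an accumulation point of zeros would force all Taylor coefficients to vanish there, and the identity theorem would give $g \equiv 0$. Hence $Z_g$ is countable, so $\mu_1(Z_g) = 0$. For the inductive step I write $\mathbf x = (\mathbf x', x_N)$ and apply Fubini's theorem:
\[
\mu_N(Z_g) = \int_{\mathbb R^{N-1}} \mu_1\{x_N : g(\mathbf x', x_N) = 0\}\, d\mathbf x'.
\]
For each fixed $\mathbf x'$ the slice $x_N \mapsto g(\mathbf x', x_N)$ is real-analytic in one variable, so by the base case its $x_N$-zero set is $\mu_1$-null \emph{unless} the slice vanishes identically. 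Thus the integrand is nonzero only on the set $A := \{\mathbf x' : g(\mathbf x', \cdot) \equiv 0\}$, and it remains to show $\mu_{N-1}(A) = 0$.

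The crux is precisely this estimate $\mu_{N-1}(A) = 0$, and it is where the inductive hypothesis enters. If $\mu_{N-1}(A) > 0$, then for every fixed $c \in \mathbb R$ the function $\mathbf x' \mapsto g(\mathbf x', c)$ is real-analytic on $\mathbb R^{N-1}$ and vanishes on $A$; since $A$ has positive $(N-1)$-dimensional measure, the inductive hypothesis forces $g(\cdot, c) \equiv 0$ on $\mathbb R^{N-1}$. As $c$ is arbitrary, this yields $g \equiv 0$, contradicting $g \not\equiv 0$. Hence $\mu_{N-1}(A) = 0$, and the Fubini identity gives $\mu_N(Z_g) = 0$, completing the induction and the lemma. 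The main obstacle I anticipate is exactly this inductive step: one must argue that the ``bad'' base set $A$, on which the restriction to a coordinate line is wholly annihilated, is itself null; the clean device is to feed $A$ back into the induction through the lower-dimensional slices $g(\cdot, c)$. By comparison, the base-case appeal to the one-variable identity theorem and the bookkeeping in the field reduction should be routine.
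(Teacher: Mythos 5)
Your proof is correct, but it takes a genuinely different route from the paper, which offers essentially no argument at all: the paper's ``proof'' is a one-line appeal to the uniqueness (identity) theorem for analytic functions, with a citation to the literature, so the measure-zero statement is taken off the shelf. You instead prove it from scratch: you reduce the case $\mathbb F=\mathbb C$ to the real-analytic setting by passing to $\mathrm{Re}\,f$ and $\mathrm{Im}\,f$ on $\mathbb R^{2n}$ (noting $Z\subseteq\{\mathrm{Re}\,f=0\}\cap\{\mathrm{Im}\,f=0\}$ and using whichever part is nontrivial), then run an induction on dimension with Tonelli--Fubini, the one-variable identity theorem as base case, and---the key device---you dispose of the exceptional set $A=\{\mathbf x':\ g(\mathbf x',\cdot)\equiv 0\}$ by feeding it back into the inductive hypothesis through the lower-dimensional slices $g(\cdot,c)$, which vanish on the positive-measure set $A$ and hence vanish identically, contradicting $g\not\equiv 0$. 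This is the standard self-contained proof that a nontrivial real-analytic function has a Lebesgue-null zero set, and every step is sound: the zero set is closed, hence measurable, so Tonelli applies to its indicator; off $A$ the slices have null zero sets by the base case; and $A$ itself is measurable (e.g.\ $A=\bigcap_{q\in\mathbb Q}\{\mathbf x':\ g(\mathbf x',q)=0\}$ is closed by continuity), so the product formula $\mu_N(Z_g)=\int_A \infty\,d\mathbf x'=0$ is legitimate once $\mu_{N-1}(A)=0$. What your route buys is elementarity and self-containedness---nothing beyond one-variable Taylor expansions and Tonelli; what the paper's route buys is brevity, deferring the real content to a standard reference. Since the functions the paper actually applies the lemma to are polynomials (entries and minors of compound matrices), either treatment suffices for the paper's purposes.
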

\begin{proof}
The result easily follows from the uniqueness theorem for analytic functions (see for instance \cite[Lemma 2, p. 1855]{AlmostSure2001}).
\end{proof}

The following corollary trivially follows from Lemma \ref{Lemmaanalituni}.
\begin{corollary}\label{corranalituni}
Let $f(\mathbf x)$ be an analytic function of  $\mathbf x \in\mathbb F^n$ and let $\mu_n$ be the Lebesgue measure on $\mathbb F^n$.
Assume that there exists a point $\mathbf x_0$ such that
$f(\mathbf x_0)\ne 0$. Then $\mu_n\{\mathbf x:\ f(\mathbf x)=0\}=0$.
\end{corollary}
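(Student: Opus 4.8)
The plan is to apply Lemma \ref{Lemmaanalituni} in its contrapositive form; no independent argument is needed. First I would observe that the hypothesis supplies a point $\mathbf x_0$ with $f(\mathbf x_0)\ne 0$, which is precisely the statement that $f$ is not identically zero on $\mathbb F^n$, i.e.\ $f\not\equiv 0$. Lemma \ref{Lemmaanalituni} asserts the implication ``$\mu_n\{\mathbf x:\ f(\mathbf x)=0\}>0 \Rightarrow f\equiv 0$''. Its contrapositive is ``$f\not\equiv 0 \Rightarrow \mu_n\{\mathbf x:\ f(\mathbf x)=0\}$ is not $>0$''. Applying this contrapositive to our $f$ yields that the measure of the zero set is not strictly positive.

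The only remaining point I would spell out is that a Lebesgue measure is nonnegative, so $\mu_n\{\mathbf x:\ f(\mathbf x)=0\}\ge 0$ always holds; combined with the conclusion that it is not strictly positive, this forces $\mu_n\{\mathbf x:\ f(\mathbf x)=0\}=0$, as claimed. There is essentially no obstacle here: the corollary is a pure logical restatement of the lemma together with the elementary fact that a quantity which is both $\ge 0$ and not $>0$ must equal $0$. I would therefore present the proof in a single line, invoking Lemma \ref{Lemmaanalituni} directly, which is exactly why the excerpt labels this consequence as trivial.
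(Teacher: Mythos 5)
Your proposal is correct and is exactly the paper's argument: the paper states that the corollary ``trivially follows from Lemma \ref{Lemmaanalituni}'', and what you have written out --- the contrapositive of the lemma combined with nonnegativity of the Lebesgue measure --- is precisely that trivial deduction made explicit.
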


We will use the following matrix analogue of Corollary \ref{corranalituni}.
\begin{lemma}\label{analogcorranalituni}
Let $\mathbf F(\mathbf x)=(f_{pq}(\mathbf x))_{p,q=1}^{P,Q}$, with $P\geq Q$, be an analytic matrix-valued function of
$\mathbf x\in\mathbb F^n$ (that is, each entry $f_{pq}(\mathbf x)$ is an analytic function of $\mathbf x$) and let $\mu_n$ be the Lebesgue measure on $\mathbb F^n$. Assume that there exists a point $\mathbf x_0$ such that
$\mathbf F(\mathbf x_0)$ has full column rank. Then
$$
\mu_n\{\mathbf x:\ \mathbf F(\mathbf x)\ \text{does not have full column rank} \}=0.
$$
\end{lemma}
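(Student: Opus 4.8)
The plan is to reduce the matrix statement to the scalar Corollary \ref{corranalituni} by expressing the full-column-rank condition through the nonvanishing of the maximal minors of $\mathbf F$. Recall that the $P\times Q$ matrix $\mathbf F(\mathbf x)$ has full column rank $Q$ if and only if at least one of its $Q\times Q$ submatrices is nonsingular, i.e. at least one of the $C_P^Q$ maximal minors is nonzero. I would label these minors $g_1(\mathbf x),\dots,g_N(\mathbf x)$ with $N=C_P^Q$, each obtained as the determinant of the submatrix of $\mathbf F(\mathbf x)$ formed by $Q$ of its $P$ rows.

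The first step is to note that every $g_i$ is an analytic function of $\mathbf x$. Indeed, each $g_i$ is a polynomial (a signed sum of products) in the entries $f_{pq}(\mathbf x)$, and since each $f_{pq}$ is analytic and finite sums and products of analytic functions are analytic, each $g_i$ is analytic on $\mathbb F^n$. This is the only place where analyticity of the entries is used, and it is immediate from closure properties; there is no real obstacle here.

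The second step is to invoke the hypothesis. Since $\mathbf F(\mathbf x_0)$ has full column rank, at least one minor, say $g_{i_0}$, satisfies $g_{i_0}(\mathbf x_0)\ne 0$. Applying Corollary \ref{corranalituni} to the single analytic function $g_{i_0}$ then yields $\mu_n\{\mathbf x:\ g_{i_0}(\mathbf x)=0\}=0$.

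The final step is a set-containment observation. The set where $\mathbf F(\mathbf x)$ fails to have full column rank is precisely the set on which \emph{all} maximal minors vanish, namely $\{\mathbf x:\ g_1(\mathbf x)=\dots=g_N(\mathbf x)=0\}$, and this is contained in $\{\mathbf x:\ g_{i_0}(\mathbf x)=0\}$. Since the latter has Lebesgue measure zero, so does the former, which is the desired conclusion. The argument is essentially bookkeeping once the reduction to a single nonvanishing minor is made; the key insight is simply that failure of full column rank forces the vanishing of the particular minor $g_{i_0}$ that is known to be nonzero at $\mathbf x_0$.
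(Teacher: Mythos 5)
Your proof is correct and is essentially identical to the paper's: the paper simply packages the maximal minors as the entries of the compound matrix $\mathcal C_Q(\mathbf F(\mathbf x))$, then applies Corollary \ref{corranalituni} to the one minor that is nonzero at $\mathbf x_0$ and uses the same containment of the rank-deficiency set in that minor's zero set. The reduction to a single nonvanishing analytic minor is exactly the paper's key step, so there is nothing to add.
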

{\em Proof.}
Let  $\mathbf f (\mathbf x):=\mathcal C_Q(\mathbf F(\mathbf x))$ and $L:=C^Q_P$.
Then $\mathbf f:\ \mathbb F^n  \rightarrow \mathbb F^{L}: \mathbf x \rightarrow \mathbf f(\mathbf x)=\left[\begin{matrix}f_1(\mathbf x)&\dots&f_{L}(\mathbf x)\end{matrix}\right]^T$ is a vector-valued analytic function.
Note that $\mathbf f (\mathbf x)=\vzero$ if and only if the matrix $\mathbf F(\mathbf x)$ does not have full column rank.
 Since  $\mathbf f (\mathbf x_0)\ne\vzero$, there exists
$l_0\in\{1,\dots, L\}$ such that $f_{l_0} (\mathbf x)\ne 0$. Hence, by Corollary \ref{corranalituni}, $\mu_n\{ x:\  f_{l_0}(\mathbf x)=0\}=0$.
Therefore,
\begin{align*}
&\mu_n\{\mathbf x:\ \mathbf F(\mathbf x)\ \text{does not have full column rank} \}=\mu_n\{\mathbf x:\ \mathbf f(\mathbf x)=\vzero\}\\
=&\mu_n\left\{\bigcap\limits_{l=1}^{L}\{ x:\  f_l(\mathbf x)=0\}\right\}\leq\mu_n\{ x:\  f_{l_0}(\mathbf x)=0\}=0.\qquad\endproof
\end{align*}

The following lemma implies that, if $k_{\mathbf C}=r_{\mathbf C}$, then \eqref{eq5.1} in Proposition \ref{proponematrixisunique} holds generically, provided there exist matrices $\mathbf A_0\in\mathbb F^{I\times R}$ and $\mathbf B_0\in\mathbb F^{J\times R}$ for which $\mathcal C_{m_{\mathbf C}}(\mathbf A_0) \odot \mathcal C_{m_{\mathbf C}}(\mathbf B_0)$ has full column rank.
\begin{lemma}\label{Lemma:kc=rc}
Suppose the matrices $\mathbf A_0\in\mathbb F^{I\times R}$, $\mathbf B_0\in\mathbb F^{J\times R}$, and $\mathbf C\in\mathbb F^{K\times R}$ satisfy the following
conditions:
$$
k_{\mathbf A_0}=\min(I,R),\quad k_{\mathbf B_0}=\min(J,R),\quad k_{\mathbf C}=r_{\mathbf C}.
$$
Suppose further the matrix $\mathcal C_{m_{\mathbf C}}(\mathbf A_0)\odot \mathcal C_{m_{\mathbf C}}(\mathbf B_0)$ has full column rank, where $m=R-r_{\mathbf C}+2$.
Then
\begin{equation}
\max(\min(I,J-1,  R-1),\min(I-1,J, R-1))+k_{\mathbf C}\geq R+1.\label{eq:IJ}
\end{equation}
\end{lemma}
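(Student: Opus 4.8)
The plan is to convert the compound-matrix hypothesis into a bound on $k$-ranks and then finish with an elementary comparison of minima. First I would note that the full column rank of $\mathcal C_{m}(\mathbf A_0)\odot\mathcal C_{m}(\mathbf B_0)$, with $m=m_{\mathbf C}=R-r_{\mathbf C}+2$, is precisely condition $(\textup{C{\scriptsize m}})$ for the pair $(\mathbf A_0,\mathbf B_0)$. By Proposition \ref{Prop:KCUW} (2), $(\textup{C{\scriptsize m}})$ implies $(\textup{U{\scriptsize m}})$, and by Proposition \ref{Prop:KCUW} (9), $(\textup{U{\scriptsize m}})$ in turn yields $\min(k_{\mathbf A_0},k_{\mathbf B_0})\geq m$. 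This is the only place where the assumption on the compound matrices enters.

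Next I would feed in the prescribed values of the $k$-ranks. Since $k_{\mathbf A_0}=\min(I,R)$ and $k_{\mathbf B_0}=\min(J,R)$, the bound $\min(k_{\mathbf A_0},k_{\mathbf B_0})\geq m$ means $\min(I,R)\geq m$ and $\min(J,R)\geq m$, so that the three inequalities $I\geq m$, $J\geq m$, and $R\geq m$ all hold simultaneously. Using the hypothesis $k_{\mathbf C}=r_{\mathbf C}$ I then rewrite $m=R-k_{\mathbf C}+2$.

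Finally I would bound the left-hand side of \eqref{eq:IJ} from below by dropping the second argument of the maximum. From $I\geq m$, $J-1\geq m-1$, and $R-1\geq m-1$ one obtains $\min(I,J-1,R-1)\geq m-1=R-k_{\mathbf C}+1$, whence $\min(I,J-1,R-1)+k_{\mathbf C}\geq R+1$; the maximum over the two arguments can only be larger, so \eqref{eq:IJ} follows.

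I expect no genuinely difficult step: the entire substance is the chain $(\textup{C{\scriptsize m}})\Rightarrow(\textup{U{\scriptsize m}})\Rightarrow\min(k_{\mathbf A_0},k_{\mathbf B_0})\geq m$ supplied by Proposition \ref{Prop:KCUW}, after which the conclusion is a routine min/max estimate. The one subtlety worth flagging is that the equality $k_{\mathbf C}=r_{\mathbf C}$ is indispensable, since it is exactly what turns the available bound $m-1=R-r_{\mathbf C}+1$ into the required quantity $R-k_{\mathbf C}+1$; without it the argument would only give the weaker inequality with $r_{\mathbf C}$ in place of $k_{\mathbf C}$.
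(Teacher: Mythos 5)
Your proof is correct and takes essentially the same route as the paper's: both derive $\min(k_{\mathbf A_0},k_{\mathbf B_0})\geq m_{\mathbf C}$ via Proposition \ref{Prop:KCUW} (2) and (9), substitute $k_{\mathbf A_0}=\min(I,R)$, $k_{\mathbf B_0}=\min(J,R)$, $k_{\mathbf C}=r_{\mathbf C}$, and conclude with the same elementary min/max estimate. If anything, your final step (bounding the max below by $\min(I,J-1,R-1)\geq m_{\mathbf C}-1$) is stated slightly more carefully than the paper's intermediate inequality, but the argument is the same.
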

\begin{proof}
By Proposition \ref{Prop:KCUW} (2) and (9),
 $\min(k_{\mathbf A_0},k_{\mathbf B_0})\geq m_{\mathbf C}$. Hence,
$$
\min(I,J, R)\geq \min(k_{\mathbf A_0},k_{\mathbf B_0})\geq m_{\mathbf C}=R-r_{\mathbf C}+2=R-k_{\mathbf C}+2.
$$
Therefore,
\begin{equation*}
\begin{split}
\max(\min(I,J-1, R-1),\min(I-1,J,R-1))&+k_{\mathbf C}\geq \min(I-1,J-1)+k_{\mathbf C}\\
 &\geq R-k_{\mathbf C}+2-1+k_{\mathbf C}=R+1.
\end{split}
\end{equation*}
Hence, \eqref{eq:IJ} holds.
\end{proof}

The following proposition is the main result of this section.
\begin{proposition}\label{mostgeneralgeneric}
Let the matrix $\mathbf C\in\mathbb F^{K\times R}$ be fixed and suppose  $k_{\mathbf C}\geq 1$.
Assume that  there exist matrices $\mathbf A_0\in \mathbb F^{I\times R}$ and  $\mathbf B_0\in \mathbb F^{J\times R}$ such that
$\mathcal C_{m}(\mathbf A_0)\odot \mathcal C_{m}(\mathbf B_0)$  has full column rank, where $m=R-r_{\mathbf C}+2$.
Set $n=(I+J)R$.
Then
\begin{itemize}
\item[\textup{(i)}]
\begin{equation*}
\begin{split}
\mu_n\{(\mathbf A,\mathbf B):\ &\mathcal T:=[\mathbf A,\mathbf B,\mathbf C]_R \text{ has rank less than }R \text{ or}\\
&\text{the third factor matrix of }\mathcal T\text{ is not unique}\}=0.
\end{split}
\end{equation*}
\item[\textup{(ii)}]
 If additionally,  $k_{\mathbf C}=r_{\mathbf C}$, or \eqref{eq:IJ} holds, then
\begin{equation}\label{mu_n}
\begin{split}
\mu_n\{(\mathbf A,\mathbf B):\ \mathcal T:=[\mathbf A,\mathbf B,\mathbf C]_R &\text{ has rank less than }R \text{ or}\\
&\text{ the CPD of }\mathcal T\text{ is not unique}\}=0.
\end{split}
\end{equation}
\end{itemize}
\end{proposition}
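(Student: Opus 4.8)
The plan is to verify, for $(\mathbf A,\mathbf B)$ ranging outside a $\mu_n$-null set, the hypotheses of the deterministic results already established, and then to read off the conclusions pointwise on the complement. The workhorse is Lemma \ref{analogcorranalituni}. Every entry of $\mathcal C_m(\mathbf A)\odot\mathcal C_m(\mathbf B)$ is a polynomial, hence an analytic function, of the $n=(I+J)R$ entries of $(\mathbf A,\mathbf B)$, and by hypothesis this matrix has full column rank at the single point $(\mathbf A_0,\mathbf B_0)$. Lemma \ref{analogcorranalituni} therefore shows that
$$
\Omega_{\mathrm C}:=\{(\mathbf A,\mathbf B):\ \mathcal C_m(\mathbf A)\odot\mathcal C_m(\mathbf B)\ \text{does not have full column rank}\}
$$
is $\mu_n$-null; equivalently, condition $\cond{C}{m}$ (with $m=m_{\mathbf C}$) holds generically.

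For part (i) I would note first that full column rank of $\mathcal C_m(\mathbf A_0)\odot\mathcal C_m(\mathbf B_0)$ forces $m\le\min(I,J)$, so hypothesis (ii) of Proposition \ref{prmostgeneraldis} is met, and $k_{\mathbf C}\ge 1$ is assumed. For each $(\mathbf A,\mathbf B)\notin\Omega_{\mathrm C}$ condition $\cond{C}{m}$ holds, and the implications gathered in Proposition \ref{Prop:1.16} (via schemes \eqref{maindiagramintro} and \eqref{manyimplicationsm}) deliver $\cond{C}{1}$ together with $\cond{W}{1},\dots,\cond{W}{m}$; hence $r_{\mathcal T}=R$ and the third factor matrix of $\mathcal T$ is unique. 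The exceptional set of part (i) is thus contained in $\Omega_{\mathrm C}$ and has measure zero.

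For part (ii) I must additionally force \eqref{eq5.1} to hold generically. Introduce the set
$$
\Omega_k:=\{(\mathbf A,\mathbf B):\ k_{\mathbf A}=\min(I,R)\ \text{and}\ k_{\mathbf B}=\min(J,R)\},
$$
whose complement is a finite union of loci on which some fixed maximal square submatrix of $\mathbf A$ or $\mathbf B$ is singular; each such locus is $\mu_n$-null by Lemma \ref{analogcorranalituni}, so $\mathbb F^n\setminus\Omega_k$ is null. On $\Omega_k$ one computes $\min(k_{\mathbf A},k_{\mathbf B}-1)=\min(I,J-1,R-1)$ and $\min(k_{\mathbf A}-1,k_{\mathbf B})=\min(I-1,J,R-1)$, so \eqref{eq5.1} is there equivalent to \eqref{eq:IJ}. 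If \eqref{eq:IJ} is assumed, then \eqref{eq5.1} holds throughout $\Omega_k$. If instead $k_{\mathbf C}=r_{\mathbf C}$, I would pick any point of the nonempty full-measure set $\Omega_k\setminus\Omega_{\mathrm C}$: the resulting matrices meet every hypothesis of Lemma \ref{Lemma:kc=rc}, which yields \eqref{eq:IJ}, reducing to the previous case. In both cases the two hypotheses of Corollary \ref{ProFullUniq1onematrixcor1} hold on the full-measure set $\Omega_k\setminus\Omega_{\mathrm C}$, giving $r_{\mathcal T}=R$ and uniqueness of the CPD there; the exceptional set in \eqref{mu_n} is contained in $\Omega_{\mathrm C}\cup(\mathbb F^n\setminus\Omega_k)$ and is null.

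The polynomial/analyticity checks and the elementary $\min$-identities are routine. The one delicate point is the reduction of the case $k_{\mathbf C}=r_{\mathbf C}$ to the case \eqref{eq:IJ}: the pair $(\mathbf A_0,\mathbf B_0)$ supplied by the statement need not satisfy the $k$-rank requirements of Lemma \ref{Lemma:kc=rc}, so I must instead manufacture a suitable pair by intersecting the two independent generic conditions $\cond{C}{m}$ and $k_{\mathbf A}=\min(I,R)$, $k_{\mathbf B}=\min(J,R)$, exploiting that this intersection has full measure and is therefore nonempty.
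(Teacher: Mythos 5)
Your proof is correct and follows essentially the same route as the paper's: Lemma \ref{analogcorranalituni} makes \cond{C}{m} generic, the implications in Proposition \ref{Prop:1.16} give part (i) pointwise off that null set, and for part (ii) the genericity of maximal $k$-ranks makes \eqref{eq5.1} equivalent to \eqref{eq:IJ}, with Lemma \ref{Lemma:kc=rc} reducing the case $k_{\mathbf C}=r_{\mathbf C}$ to the case \eqref{eq:IJ}, after which the deterministic uniqueness result (your Corollary \ref{ProFullUniq1onematrixcor1}, the paper's equivalent combination of Proposition \ref{proponematrixisunique} with part (i)) applies on the full-measure set. Your explicit observation that $(\mathbf A_0,\mathbf B_0)$ must be replaced by a point of the full-measure intersection of the two generic conditions before Lemma \ref{Lemma:kc=rc} can be invoked is a detail the paper leaves implicit, but it is the same argument.
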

{\em Proof.}
$\textup{(i)}$
Let $P:=C^m_IC^m_J$, $Q:=C^m_R$, $n:=(I+J)R$, $\mathbf x:=(\mathbf A,\mathbf B)$, $\mathbf x_0:=(\mathbf A_0,\mathbf B_0)$ and
$\mathbf F(\mathbf x):= \mathcal C_{m}(\mathbf A)\odot \mathcal C_{m}(\mathbf B)$.
Since $k_{\mathbf C}\geq 1$, from Proposition \ref{Prop:1.16} and Lemma \ref{analogcorranalituni} it follows that
\begin{equation*}
\begin{split}
\mu_n\{(\mathbf A,\mathbf B):\ &\mathcal T:=[\mathbf A,\mathbf B,\mathbf C]_R \text{ has rank less than }R \text{ or}\\
&\text{the third factor matrix of }\mathcal T\text{ is not unique}\}\\
\leq
\mu_n\{(\mathbf A,\mathbf B):\ &\mathcal C_{m}(\mathbf A)\odot \mathcal C_{m}(\mathbf B) \text{ does not have full column rank}\}=0.
\end{split}
\end{equation*}

$\textup{(ii)}$  By Lemma \ref{Lemma:kc=rc}, we can assume that \eqref{eq:IJ}  holds.
We obviously have
$$
\mu_n\{(\mathbf A,\mathbf B):\ k_{\mathbf A} < \min(I,  R)\ \text{ or } k_{\mathbf B}<\min( J, R)\}=0.
$$
Hence, by \eqref{eq:IJ},
$$
\mu_n\{(\mathbf A,\mathbf B):\ \text{\eqref{eq5.1} does not hold}\}=0.
$$
From Proposition \ref{proponematrixisunique} and \textup{(i)} it follows that
\begin{equation*}
\begin{split}
\mu_n\{(\mathbf A,\mathbf B):\ \mathcal T:=[\mathbf A,\mathbf B,\mathbf C]_R &\text{ has rank less than }R \text{ or}\\
&\text{ the CPD of }\mathcal T\text{ is not unique}\}\\
\leq
\mu_n\{(\mathbf A,\mathbf B):\ \mathcal T:=[\mathbf A,\mathbf B,\mathbf C]_R &\text{ has rank less than }R \text{ or}\\
&\text{the third factor matrix of }\mathcal T\text{ is not unique or} \\
&\text{\eqref{eq5.1} does not hold}\}=0.\qquad\endproof
\end{split}
\end{equation*}

\begin{proposition}\label{prop111overall}
The  CPD of  an $I\times J\times K$ tensor of rank $R$ is
generically unique if there exist matrices $\mathbf A_0\in \mathbb F^{I\times R}$ and  $\mathbf B_0\in \mathbb F^{J\times R}$ such that
$\mathcal C_{m}(\mathbf A_0)\odot \mathcal C_{m}(\mathbf B_0)$  has full column rank, where $m=R-\min (K,R)+2$.
\end{proposition}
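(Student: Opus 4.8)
The plan is to derive the statement from Proposition \ref{mostgeneralgeneric} by conditioning on the third factor matrix $\mathbf C$ and integrating over it. I would identify $\mathbb F^{(I+J+K)R}\cong \mathbb F^{(I+J)R}\times\mathbb F^{KR}$ via $(\mathbf A,\mathbf B,\mathbf C)\leftrightarrow((\mathbf A,\mathbf B),\mathbf C)$, write $\mu=\mu_{(I+J)R}\times\mu_{KR}$ for the associated product of Lebesgue measures, set $m:=R-\min(K,R)+2$, and let
$$
S:=\{(\mathbf A,\mathbf B,\mathbf C):\ [\mathbf A,\mathbf B,\mathbf C]_R \text{ has rank} < R \text{ or its CPD is not unique}\}.
$$
The goal is to show $\mu(S)=0$. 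The first step is to isolate the generic stratum of $\mathbf C$,
$$
G:=\{\mathbf C\in\mathbb F^{KR}:\ k_{\mathbf C}=r_{\mathbf C}=\min(K,R)\},
$$
and recall from the generic conditions in \S\ref{Subsection1.3} that $\mu_{KR}(\mathbb F^{KR}\setminus G)=0$, since $G^{\textup{c}}$ is contained in the zero locus of suitable minors of $\mathbf C$.

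Next I would check, for each fixed $\mathbf C\in G$, that the hypotheses of Proposition \ref{mostgeneralgeneric}\,\textup{(ii)} are met. Indeed $k_{\mathbf C}=\min(K,R)\geq 1$ and $k_{\mathbf C}=r_{\mathbf C}$, while $R-r_{\mathbf C}+2=R-\min(K,R)+2=m$; hence the very same matrices $\mathbf A_0,\mathbf B_0$ supplied by the hypothesis make $\mathcal C_m(\mathbf A_0)\odot\mathcal C_m(\mathbf B_0)$ have full column rank with the exponent $m=R-r_{\mathbf C}+2$ required by that proposition. Proposition \ref{mostgeneralgeneric}\,\textup{(ii)} then gives, for every such $\mathbf C$,
$$
\mu_{(I+J)R}\{(\mathbf A,\mathbf B):\ ((\mathbf A,\mathbf B),\mathbf C)\in S\}=0.
$$

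It then remains to integrate these fiberwise estimates. Since $S$ is contained in a finite union of zero loci of analytic functions (compound-matrix minors together with the rank conditions underlying \eqref{eq5.1}), it is Lebesgue measurable and Tonelli's theorem applies. Splitting $S=\bigl(S\cap(\mathbb F^{(I+J)R}\times G)\bigr)\cup\bigl(S\setminus(\mathbb F^{(I+J)R}\times G)\bigr)$, the first piece has measure $\int_G 0\,d\mu_{KR}=0$ by the previous display, while the second is contained in $\mathbb F^{(I+J)R}\times(\mathbb F^{KR}\setminus G)$ and hence has measure $\int_{\mathbb F^{(I+J)R}}\mu_{KR}(\mathbb F^{KR}\setminus G)\,d\mu_{(I+J)R}=0$. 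Therefore $\mu(S)=0$, which is precisely the asserted generic uniqueness (and simultaneously yields $r_{\mathcal T}=R$ generically).

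The substantive work is already carried by Proposition \ref{mostgeneralgeneric}; the only genuine points to get right here are the matching of the parameter $m$ on the generic stratum $r_{\mathbf C}=\min(K,R)$ and the Tonelli bookkeeping that upgrades fixed-$\mathbf C$ genericity to genericity over the full parameter space. I expect the measurability of $S$ to be the most delicate formal point, which I would handle by exhibiting the explicit analytic (indeed semialgebraic) superset of $S$ just mentioned rather than by arguing about $S$ directly. Finally, the three-condition Proposition \ref{prop111overallbig} would follow at once by applying this result to the two cyclically permuted unfoldings.
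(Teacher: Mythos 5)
Your proof is correct and takes essentially the same route as the paper: the paper likewise restricts to the generic stratum of $\mathbf C$ (it sets $\Omega=\{\mathbf C:\ k_{\mathbf C}<r_{\mathbf C}\}$ after noting that generically $r_{\mathbf C}=\min(K,R)$, and gets $\mu_{KR}(\Omega)=0$ from Lemma \ref{analogcorranalituni}), applies Proposition \ref{mostgeneralgeneric}\,\textup{(ii)} fiberwise for $\mathbf C\not\in\Omega$, and concludes with Fubini's theorem. Your two refinements --- spelling out the stratum $k_{\mathbf C}=r_{\mathbf C}=\min(K,R)$ so that the exponent $m$ matches, and the measurability bookkeeping for $S$ --- are points the paper leaves implicit rather than a different argument.
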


{\em Proof.}
Generically we have $r_{\mathbf C}=\min(K,R)$. Let $N=(I+J+K)R$, $n=(I+J)R$, and let
$\Omega=\{\mathbf C:\ k_{\mathbf C}<r_{\mathbf C}\}\subset \mathbb F^{KR}$.  By  application of Lemma \ref{analogcorranalituni}, one obtains that $\mu_{KR}(\Omega)=0$.
From Proposition \ref{mostgeneralgeneric} it follows that  \eqref{mu_n} holds for $\mathbf C\not\in\Omega$.
Now
\begin{equation*}
\begin{split}
\mu_N\{(\mathbf A,\mathbf B,\mathbf C):\ \mathcal T:=[\mathbf A,\mathbf B,\mathbf C]_R &\text{ has rank less than }R \text{ or}\\
&\text{ the CPD of }\mathcal T\text{ is not unique}\}=0
\end{split}
\end{equation*}
follows from Fubini's theorem \cite[Theorem C, p. 148]{halmos1974measure}.
\qquad\endproof

{\em Proof of Proposition \ref{prop111overallbig}.}
Proposition \ref{prop111overallbig}  follows from Proposition \ref{prop111overall} by permuting factors.
%
%
\qquad\endproof

\subsection{Generic uniqueness of SFS-CPD}\label{generic:indscal}

For generic uniqueness of the SFS-CPD we resort to the following definition.

\begin{definition}\label{def1.7:indscal}
Let $\mu$ be the Lebesgue measure on $\mathbb F^{(2I+K)R}$.
The SFS-CPD  of an $I\times I\times K$ tensor of SFS-rank $R$ is $generically$ $unique$ if
$$
\mu\{(\mathbf A,\mathbf C):\ \text{the SFS-CPD }  \text{of the tensor }\ [\mathbf A,\mathbf A,\mathbf C]_R
\text{ is not unique }\}=0.
$$
\end{definition}
We have the following counterpart of Proposition \ref{prop111overallbig}.

\begin{proposition}\label{prop111overall:indscal}
The  SFS-CPD of  an $I\times I\times K$ SFS-tensor of SFS-rank $R$ is
generically unique if there exist matrices $\mathbf A_0\in \mathbb F^{I\times R}$ and  $\mathbf C_0\in \mathbb F^{K\times R}$ such that
 $\mathcal C_{m_{\mathbf C}}(\mathbf A_0)\odot \mathcal C_{m_{\mathbf C}}(\mathbf A_0)$ or
$\mathcal C_{m_{\mathbf A}}(\mathbf A_0)\odot \mathcal C_{m_{\mathbf A}}(\mathbf C_0)$
  has full column rank, where $m_{\mathbf C}=R-\min (K,R)+2$ and $m_{\mathbf A}=R-\min (I,R)+2$.
\end{proposition}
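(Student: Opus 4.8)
The plan is to build \emph{Proposition~\ref{prop111overall:indscal}} as the SFS-counterpart of Proposition~\ref{prop111overallbig}, imitating the proof of that proposition but invoking the symmetry-adapted uniqueness results of Subsection~\ref{sect:symm} instead of the general ones. Because the SFS-symmetry identifies the first two modes, the three symmetric cases of Proposition~\ref{prop111overallbig} collapse to the two that appear in the hypothesis, and these are handled by Propositions~\ref{ProFullUniq1onematrixcor1:indscal12} and~\ref{ProFullUniq1onematrixcor1:indscal3}, respectively. Throughout I will use that $k_{\mathbf A}=\min(I,R)$ and $k_{\mathbf C}=\min(K,R)$ hold for $\mu$-almost every pair $(\mathbf A,\mathbf C)$, so that in each case the only genericity statements left to prove concern the full column rank of one compound matrix and a single $k$-rank inequality. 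Note also that each of the two compound matrices can have full column rank at the prescribed point only if it has at least as many rows as columns, so the dimension hypothesis $P\ge Q$ of Lemma~\ref{analogcorranalituni} is automatic.

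\emph{Case (a): $\mathcal C_{m_{\mathbf C}}(\mathbf A_0)\odot\mathcal C_{m_{\mathbf C}}(\mathbf A_0)$ has full column rank.} The map $\mathbf A\mapsto\mathcal C_{m_{\mathbf C}}(\mathbf A)\odot\mathcal C_{m_{\mathbf C}}(\mathbf A)$ is analytic in the entries of $\mathbf A$ and has full column rank at $\mathbf A=\mathbf A_0$; hence, by Lemma~\ref{analogcorranalituni}, it has full column rank for almost every $\mathbf A$, and therefore (the map not depending on $\mathbf C$, by Fubini) for $\mu$-almost every $(\mathbf A,\mathbf C)$. This is condition~(ii) of Proposition~\ref{ProFullUniq1onematrixcor1:indscal12}. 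For condition~(i), Proposition~\ref{Prop:KCUW}(2),(9) applied at $\mathbf A_0$ forces $k_{\mathbf A_0}\ge m_{\mathbf C}=R-\min(K,R)+2$; since $k_{\mathbf A_0}\le\min(I,R)$, this gives $\min(I,R)\ge R-\min(K,R)+2$, so that generically $k_{\mathbf A}+k_{\mathbf C}=\min(I,R)+\min(K,R)\ge R+2$. Both hypotheses of Proposition~\ref{ProFullUniq1onematrixcor1:indscal12} thus hold off a set of measure zero, which yields generic uniqueness of the SFS-CPD.

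\emph{Case (b): $\mathcal C_{m_{\mathbf A}}(\mathbf A_0)\odot\mathcal C_{m_{\mathbf A}}(\mathbf C_0)$ has full column rank.} Here I argue in the same way, but with Proposition~\ref{ProFullUniq1onematrixcor1:indscal3}, applying Lemma~\ref{analogcorranalituni} to the analytic map $(\mathbf A,\mathbf C)\mapsto\mathcal C_{m_{\mathbf A}}(\mathbf A)\odot\mathcal C_{m_{\mathbf A}}(\mathbf C)$ to obtain its condition~(ii) generically. For its condition~(i), Proposition~\ref{Prop:KCUW}(2),(9) gives $\min(k_{\mathbf A_0},k_{\mathbf C_0})\ge m_{\mathbf A}=R-\min(I,R)+2$, whence $2\min(I,R)\ge R+2$ and $\min(I,R)+\min(K,R)\ge R+2$. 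Writing $a=\min(I,R)=k_{\mathbf A}$ and $c=\min(K,R)=k_{\mathbf C}$ for the generic values, a short case distinction on whether $a<c$, $a=c$, or $a>c$ shows in each case that $a+\max(\min(c-1,a),\min(c,a-1))\ge R+1$, which is precisely condition~(i). Hence Proposition~\ref{ProFullUniq1onematrixcor1:indscal3} applies for generic $(\mathbf A,\mathbf C)$.

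In both cases the exceptional set is contained in the union of the zero-measure set where the relevant compound matrix drops rank (Lemma~\ref{analogcorranalituni}) and the zero-measure sets where $k_{\mathbf A}<\min(I,R)$ or $k_{\mathbf C}<\min(K,R)$, and so is itself of measure zero in the sense of Definition~\ref{def1.7:indscal}. The step I expect to require the most care is the verification of condition~(i) of the respective symmetric corollary: translating the full-column-rank hypothesis at the single point $\mathbf A_0$ (resp.\ $(\mathbf A_0,\mathbf C_0)$) into a $k$-rank bound through Proposition~\ref{Prop:KCUW}(2),(9), and then combining it with the generic maximal $k$-ranks. In Case~(b) this additionally requires the elementary but slightly delicate analysis of the nested expression $\max(\min(c-1,a),\min(c,a-1))$.
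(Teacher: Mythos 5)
Your proof is correct, and it verifies all the delicate points properly: the observation that full column rank at the prescribed point forces the row/column dimension hypothesis of Lemma~\ref{analogcorranalituni}, the transfer of the pointwise compound-matrix condition into $k$-rank bounds via Proposition~\ref{Prop:KCUW}(2),(9), and the case analysis showing that $2a\geq R+2$ and $a+c\geq R+2$ imply $a+\max(\min(c-1,a),\min(c,a-1))\geq R+1$ all check out. However, your route differs from the paper's. The paper disposes of this proposition in one sentence, ``combining Proposition~\ref{prop111overallbig} and Lemma~\ref{Lemma:5.13}'': with $\mathbf B_0=\mathbf A_0$ the hypothesis is an instance of Proposition~\ref{prop111overallbig}, which gives generic uniqueness of the unconstrained CPD, and Lemma~\ref{Lemma:5.13} converts CPD uniqueness into SFS-CPD uniqueness tensor by tensor. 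You bypass Proposition~\ref{prop111overallbig} entirely and instead rerun its genericity mechanism (Lemma~\ref{analogcorranalituni} plus maximal generic $k$-ranks) directly on the symmetric parameter space of pairs $(\mathbf A,\mathbf C)$, feeding the result into the deterministic SFS statements, Propositions~\ref{ProFullUniq1onematrixcor1:indscal12} and~\ref{ProFullUniq1onematrixcor1:indscal3} (which are themselves Corollary~\ref{ProFullUniq1onematrixcor1} combined with Lemma~\ref{Lemma:5.13}). Your version is arguably the more rigorous one: Proposition~\ref{prop111overallbig} is a statement about almost every \emph{triple} $(\mathbf A,\mathbf B,\mathbf C)$, and the symmetric slice $\{\mathbf B=\mathbf A\}$ is a Lebesgue-null set in that space, so the generic statement over triples does not formally restrict to almost every pair $(\mathbf A,\mathbf C)$ in the sense of Definition~\ref{def1.7:indscal}; what the paper's one-liner implicitly relies on is that the \emph{proof} of Proposition~\ref{prop111overallbig} survives the substitution $\mathbf B=\mathbf A$, and that is precisely what you carry out. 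The paper's approach buys brevity by reusing an already-stated proposition; yours buys a complete argument carried out on the correct measure space.
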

\begin{proof}
The proof is obtained by combining Proposition \ref{prop111overallbig} and Lemma \ref{Lemma:5.13}.
\end{proof}

\subsection{Examples}\label{sec:last2}

\begin{example}\label{ex:pert}
This example illustrates how one may adapt the approach in subsections  \ref{sec:last1} and \ref{generic:indscal} to particular types of structured factor matrices.

Let $\mathcal I_4$ be the $4\times 4\times 4$ tensor with ones on the main diagonal and zero off-diagonal entries and let $\mathcal T=\mathcal I_4+\mathbf a\circ\mathbf b\circ\mathbf c$ be a generic rank-$1$  perturbation of $\mathcal I_4$. Then $\mathcal T=[[\mathbf I_4\ \mathbf a], [\mathbf I_4\ \mathbf b], [\mathbf I_4\ \mathbf c]]_5$.
Since the $k$-ranks of all  factor matrices of $\mathcal T$ are equal to $4$, it follows from Kruskal's Theorem \ref{theoremKruskal} that
$r_{\mathcal T}=5$ and that the CPD of $\mathcal T$ is unique.

Let us now consider structured rank-$1$ perturbations $\bar{\mathbf a}\circ\bar{\mathbf b}\circ\bar{\mathbf c}$ that do not change the fourth vertical, third horizontal, and second frontal slice of $\mathcal I_4$.
The vectors $\bar{\mathbf a}$, $\bar{\mathbf b}$, and $\bar{\mathbf c}$ admit the following parameterizations
$$
\bar{\mathbf a}=\left[\begin{matrix}a_1&a_2&a_3&0\end{matrix}\right],\quad
\bar{\mathbf b}=\left[\begin{matrix}b_1&b_2&0&b_4\end{matrix}\right],\quad
\bar{\mathbf c}=\left[\begin{matrix}c_1&0&c_3&c_4\end{matrix}\right],
$$
with $a_i,b_j,c_k\in\mathbb F$.

Now the $k$-ranks of all  factor matrices of $\bar{\mathcal T}:=\mathcal I_4+\bar{\mathbf a}\circ\bar{\mathbf b}\circ\bar{\mathbf c}$ are equal to $3$, and (generic) uniqueness of the CPD of $\bar{\mathcal T}$ does not follow from Kruskal's Theorem \ref{theoremKruskal}.

We show that
$$
\mu_9\{(\bar{\mathbf a}, \bar{\mathbf b},\bar{\mathbf c}): \text{the CPD of }\bar{\mathcal T}:=\mathcal I_4+\bar{\mathbf a}\circ\bar{\mathbf b}\circ\bar{\mathbf c}\text{ is not unique or }r_{\bar{\mathcal T}}<5\}=0,
$$
that is, the CPD of rank-$1$ structured generic perturbation of $\mathcal I_4$ is again unique.

Let the matrices $\mathbf A_0$, $\mathbf B_0$, and $\mathbf C_0$ be given by the matrices $\mathbf A$, $\mathbf B$, and $\mathbf C$, respectively, in Example \ref{example5.7}. As in Example \ref{example5.7}
the matrix pairs $(\mathbf A_0,\mathbf B_0)$, $(\mathbf B_0,\mathbf C_0)$, and $(\mathbf C_0,\mathbf A_0)$ satisfy
condition $\text{\textup{(C{\scriptsize 3})}}$.
Then, by Lemma \ref{analogcorranalituni},
\begin{gather*}
\mu_6\{(\bar{\mathbf a}, \bar{\mathbf b}): \mathcal C_3(\mathbf A)\odot\mathcal C_3(\mathbf B)\text{ does not have full column rank}\}=0,\\
\mu_6\{(\bar{\mathbf b}, \bar{\mathbf c}): \mathcal C_3(\mathbf B)\odot\mathcal C_3(\mathbf C)\text{ does not have full column rank}\}=0.
\end{gather*}
By Fubini's theorem \cite[Theorem C, p. 148]{halmos1974measure},
\begin{gather*}
\mu_9\{(\bar{\mathbf a}, \bar{\mathbf b},\bar{\mathbf c}): \mathcal C_3(\mathbf A)\odot\mathcal C_3(\mathbf B)\text{ has not full column rank or } k_{\mathbf C}<3 \text{ or }r_{\mathbf C}<4  \}=0,\\
\mu_9\{(\bar{\mathbf a}, \bar{\mathbf b},\bar{\mathbf c}): \mathcal C_3(\mathbf B)\odot\mathcal C_3(\mathbf C)\text{ has not full column rank or } k_{\mathbf A}<3 \text{ or }r_{\mathbf A}<4  \}=0.
\end{gather*}
Now generic uniqueness of the structured rank-$1$ perturbation of $\mathcal I_4$ follows from Proposition \ref{proptwomatrixisunique}.
\end{example}

\begin{example}\label{example4.7}
Let $\mathcal T=[\mathbf A, \mathbf B, \mathbf C]_R$ denote a PD of an $I\times I\times (2I-1)$ tensor, where $I\geq 4$.
Generically, $k_{\mathbf A} = k_{\mathbf B} = I$ and $k_{\mathbf C} = 2I-1$.
Then Kruskal's condition \eqref{Kruskal} guarantees generic uniqueness for $R\leq\lfloor\frac{I+I+2I-1-2}{2}\rfloor=2I-1$.

On the other hand,  (\ref{C2generic}) guarantees generic uniqueness of the CPD under the conditions $R \leq 2I-1$ and $C^2_R \leq (C^2_I)^2$.  The maximum value of $R$ that satisfies these bounds is shown in the column corresponding to $m_{\mathbf C}=2$ in Table \ref{tab2}. The condition in Theorem \ref{th:fcr} is even more relaxed.

We now move to cases where $R > 2I-1$, where Theorem \ref{th:fcr} no longer applies.  By Proposition \ref{prop111overall},
the CPD of $\mathcal T$ of an $I\times I\times (2I-1)$ tensor of rank $R$ is generically unique if there exist matrices $\mathbf A_0\in \mathbb F^{I\times R}$ and  $\mathbf B_0\in \mathbb F^{I\times R}$ such that $\mathcal C_{m_{\mathbf C}}(\mathbf A_0)\odot \mathcal C_{m_{\mathbf C}}(\mathbf B_0)$ has full column rank, where $m_{\mathbf C}=R-(2I-1)+2=R-2I+3$. The proof of Proposition \ref{prop111overall} shows that, if there exist $\mathbf A_0$ and  $\mathbf B_0$ such that $\mathcal C_{m_{\mathbf C}}(\mathbf A_0)\odot \mathcal C_{m_{\mathbf C}}(\mathbf B_0)$ has full column rank, then actually $\mathcal C_{m_{\mathbf C}}(\mathbf A_0)\odot \mathcal C_{m_{\mathbf C}}(\mathbf B_0)$ has full column rank with probability one when $\mathbf A_0$ and  $\mathbf B_0$ are drawn from continuous distributions. Hence, we generate random $\mathbf A_0$ and  $\mathbf B_0$ and check up to which value of $R$ the matrix $\mathcal C_{m_{\mathbf C}}(\mathbf A_0)\odot \mathcal C_{m_{\mathbf C}}(\mathbf B_0)$ has full column rank. Table \ref{tab2} shows the results for $4 \leq I \leq 9$. For instance, we obtain that the  CPD of a $9\times 9\times 17$ tensor  of rank $R$ is generically unique if $R \leq 20$.
(By of comparison, Theorem  \ref{th:fcr} only guarantees uniqueness up to $R=17$.)

Proposition \ref{prop111overall} corresponds to condition (i) in Proposition \ref{prop111overallbig}. Note that, for $R \geq 2I-1$, we generically have $m_{\mathbf A}=m_{\mathbf B}=R-I+2\geq I+1$ such that the $m_{\mathbf B}$-th compound matrix of ${\mathbf A}$ and the $m_{\mathbf A}$-th compound matrix of ${\mathbf B}$ are not defined. Hence, we cannot resort to condition (ii) or (iii) in Proposition \ref{prop111overallbig}.

\begin{table}[htbp]
\caption{Upper bounds on $R$ under which generic uniqueness of the CPD of an $I\times I\times (2I-1)$ tensor is guaranteed by Proposition  \ref{prop111overall}.}
\begin{center}\footnotesize
\begin{tabular}{|c|c|c|c|c|}
  \hline
   dimensions of  $\mathcal T$ & \multicolumn{4}{|c|}{$m=R-2I+3$} \\
\cline{2-5}
$I\times I\times (2I-1)$       &  2&3&4&5\\
\hline
  $4\times 4\times 7$    & $7$  &     &      & \\
  $5\times 5\times 9$    & $9$  &     &      & \\
  $6\times 6\times 11$   & $11$ & $12$&      & \\
  $7\times 7\times 13$   & $13$ & $14$&      & \\
  $8\times 8\times 15$   & $15$ & $16$& $17$ & \\
  $9\times 9\times 17$   & $17$ & $18$& $19$ & $20$\\
     \hline
\end{tabular}
\end{center}
\label{tab2}
\end{table}
\end{example}
\begin{remark}
For $I=3$ and $R=2I-1=5$, the CPD of an $I\times I\times (2I-1)$ tensor $\mathcal T$ is not generically unique \cite{tenBerge2004},\cite{rankeqkrank2006}.
This is the reason why in Table \ref{tab2} we start from $I=4$.
\end{remark}

\begin{remark}
It was shown in \cite[Corollary 1, p.1852]{AlmostSure2001} that the matrix $\mathcal C_1(\mathbf A)\odot\mathcal C_1(\mathbf B)=\mathbf A\odot\mathbf B$ has full column rank with probability one
when the number of rows of $\mathbf A\odot\mathbf B$ does not exceed its  number of columns.
The same statement was made for the matrix $\mathcal C_2(\mathbf A)\odot \mathcal C_2(\mathbf B)$ in \cite{DeLathauwer2006, Psycho2006}.
However, the statement does not hold for compound matrices of arbitrary order. For instance, it does not hold for
$\mathcal C_5(\mathbf A)\odot \mathcal C_5(\mathbf B)$,  where $\mathbf A\in \mathbb F^{6\times 9}$ and $\mathbf B\in \mathbb F^{7\times 9}$.
\end{remark}
\begin{example}\label{exampleremark4.9}
Let  $\mathcal T=[\mathbf A, \mathbf B, \mathbf C]_9$ denote a generic PD in 9 terms in the $6\times7\times 6$ case.
Then $m_{\mathbf A}=m_{\mathbf C}=9-6+2=5$ and $m_{\mathbf B}=9-7+2=4$. The matrices
$\mathbf M_{\mathbf C}:=\mathcal C_5(\mathbf A)\odot \mathcal C_5(\mathbf B)$ and
$\mathbf M_{\mathbf A}:=\mathcal C_5(\mathbf B)\odot \mathcal C_5(\mathbf C)$ have $C^5_6C^5_7=C^5_9=126$ rows and columns.
Numerical experiments  indicate that $\dim \textup{ker}(\mathbf M_{\mathbf C})=\dim \textup{ker}(\mathbf M_{\mathbf A})=15$ with probability one.
Hence, we cannot use Proposition \ref{prop111overallbig} \textup{(i)} or \textup{(ii)}  for proving uniqueness of the CPD. On the other hand, the $C^4_6C^4_6\times C^4_9$
($225\times 126$) matrix $\mathbf M_{\mathbf B}:=\mathcal C_4(\mathbf C)\odot \mathcal C_4(\mathbf A)$ turns out to have full column rank for a random choice of $\mathbf A$  and $\mathbf C$. Hence, by Proposition \ref{prop111overallbig} \textup{(iii)}, the CPD is generically unique.
\end{example}
\begin{example} Here we consider $I\times I\times K$ tensors with   $I\in\{4,\dots,9\}$ and $K\in\{2,\dots,33\}$, which is more general than Example \ref{example4.7}.

We check up to which value of $R$ one of the conditions in Proposition \ref{prop111overallbig} holds for a random choice of the factor matrices. Up to this value the CPD is generically unique. The results are shown as the left-most values in Table \ref{tab3}. We also check up to which value of $R$  one of the conditions in Proposition \ref{prop111overall:indscal} holds for a random choice of the factor matrices. Up to this value the SFS-CPD is generically unique. The results are shown as the middle values in Table \ref{tab3}.

The right-most values correspond to the maximum value of $R$ for which generic uniqueness is guaranteed by
Kruskal's Theorems \ref{theoremKruskal}--\ref{theoremKruskalnew2}, i.e., the largest value of $R$ that satisfies
$
2\min(I,R)+\min(K,R)\geq 2R+2.
$
Note that Kruskal's bound is the same for CPD and SFS-CPD. The bold values in the table correspond to the results that were not yet covered  by Kruskal's Theorems \ref{theoremKruskal}--\ref{theoremKruskalnew2} or Proposition \ref{K2C2U2unique} ($m=2$).

\begin{table}
\caption{Upper bounds on $R$ under which generic uniqueness of the CPD (left and right value)  and SFS-CPD (middle and right value) of an $I\times I\times K$ tensor is guaranteed by Proposition \ref{prop111overallbig} (left), Proposition  \ref{prop111overall:indscal} (middle), and Kruskal's Theorems \ref{theoremKruskal}--\ref{theoremKruskalnew2} (right).
The values shown in bold correspond to the results that were not yet covered  by Kruskal's Theorems \ref{theoremKruskal}--\ref{theoremKruskalnew2} or Proposition \ref{K2C2U2unique} ($m=2$).}
\begin{center}\footnotesize
\begin{tabular}{|c|c|rrrrrr|}
\cline{3-8}
\multicolumn{2}{c}{}& \multicolumn{6}{|c|}{$I$}\\
\cline{3-8}
\multicolumn{2}{c|}{} &4 &5 &6 &7 &8 &9 \\
\cline{1-8}
\multirow{32}{*}{$K$}
&2  &4, 4, 4       &5, 5, 5       &6, 6, 6       &7, 7, 7       &8, 8, 8       &9, 9, 9\\
&3  &4, 4, 4       &5, 5, 5       &6, 6, 6       &7, 7, 7       &8, 8, 8       &9, 9, 9\\
&4  &5, 5, 5       &6, 6, 6       &7, 7, 7       &8, 8, 8       &9, 9, 9       &10, 10, 10\\
&5  &5, 5, 5       &6, 6, 6       &7, 7, 7       &8, 8, 8       &{\bf 10}, {\bf 10}, 9    &{\bf 11}, {\bf 11}, 10\\
&6  &6, 6, 6       &7, 7, 7       &8, 8, 8       &9, 9, 9       &10, 10, 10   &11, 11, 11\\
&7  &7, 6, 6       &{\bf 8}, 7, 7       &{\bf 9}, 8, 8       &9, 9, 9       &{\bf 11}, {\bf 11}, 10   &{\bf 12}, {\bf 12}, 11\\
&8  &8, 6, 6       &{\bf 9}, 8, 8       &9, 9, 9       &10, 10, 10   &11, 11, 11   &12, 12, 12\\
&9  &9, 6, 6       &9, 9, 8       &{\bf 10}, {\bf 10}, 9    &{\bf 11}, 10, 10   &{\bf 12}, 11, 11   &{\bf 13}, {\bf 13}, 12\\
&10 &9, 6, 6       &10, 10, 8    &{\bf 11}, 10, 10   &{\bf 12}, 11, 11   &{\bf 13}, 12, 12   &{\bf 14}, 13, 13\\
&11 &9, 6, 6       &11, 10, 8    &{\bf 12}, 11, 10   &{\bf 13}, {\bf 12}, 11   &{\bf 14}, {\bf 13}, 12   &{\bf 15}, {\bf 14}, 13\\
&12 &9, 6, 6       &12, 10, 8    &{\bf 13}, 12, 10   &{\bf 14}, {\bf 13}, 12   &{\bf 15}, {\bf 14}, 13   &{\bf 15}, {\bf 15}, 14\\
&13 &9, 6, 6       &13, 10, 8    &{\bf 14}, 13, 10   &{\bf 14}, {\bf 14}, 12   &{\bf 15}, {\bf 15}, 13   &{\bf 16}, {\bf 15}, 14\\
&14 &9, 6, 6       &14, 10, 8    &14, 14, 10   &{\bf 15}, {\bf 15}, 12   &{\bf 16}, {\bf 15}, 14   &{\bf 17}, {\bf 16}, 15\\
&15 &9, 6, 6       &14, 10, 8    &15, 15, 10   &{\bf 16}, 15, 12   &{\bf 17}, {\bf 16}, 14   &{\bf 18}, {\bf 17}, 15\\
&16 &9, 6, 6       &14, 10, 8    &16, 15, 10   &{\bf 17}, 16, 12   &{\bf 18}, {\bf 17}, 14   &{\bf 19}, {\bf 18}, 16\\
&17 &9, 6, 6       &14, 10, 8    &17, 15, 10   &{\bf 18}, 17, 12   &{\bf 19}, {\bf 18}, 14   &{\bf 20}, {\bf 19}, 16\\
&18 &9, 6, 6       &14, 10, 8    &18, 15, 10   &{\bf 19}, 18, 12   &{\bf 20}, {\bf 19}, 14   &{\bf 20}, {\bf 20}, 16\\
&19 &9, 6, 6       &14, 10, 8    &19, 15, 10   &{\bf 20}, 19, 12   &{\bf 20}, {\bf 20}, 14   &{\bf 21}, {\bf 20}, 16\\
&20 &9, 6, 6       &14, 10, 8    &20, 15, 10   &20, 20, 12   &{\bf 21}, 20, 14   &{\bf 22}, {\bf 21}, 16\\
&21 &9, 6, 6       &14, 10, 8    &21, 15, 10   &21, 20, 12   &{\bf 22}, 21, 14   &{\bf 23}, {\bf 22}, 16\\
&22 &9, 6, 6       &14, 10, 8    &21, 15, 10   &22, 20, 12   &{\bf 23}, 22, 14   &{\bf 24}, {\bf 23}, 16\\
&23 &9, 6, 6       &14, 10, 8    &21, 15, 10   &23, 20, 12   &{\bf 24}, 23, 14   &{\bf 25}, {\bf 24}, 16\\
&24 &9, 6, 6       &14, 10, 8    &21, 15, 10   &24, 20, 12   &{\bf 25}, 24, 14   &{\bf 26}, {\bf 25}, 16\\
&25 &9, 6, 6       &14, 10, 8    &21, 15, 10   &25, 20, 12   &{\bf 26}, 25, 14   &{\bf 26}, 25, 16\\
&26 &9, 6, 6       &14, 10, 8    &21, 15, 10   &26, 20, 12   &{\bf 27}, 26, 14   &{\bf 27}, 26, 16\\
&27 &9, 6, 6       &14, 10, 8    &21, 15, 10   &27, 20, 12   &27, 26, 14   &{\bf 28}, 27, 16\\
&28 &9, 6, 6       &14, 10, 8    &21, 15, 10   &28, 20, 12   &28, 26, 14   &{\bf 29}, 28, 16\\
&29 &9, 6, 6       &14, 10, 8    &21, 15, 10   &29, 20, 12   &29, 26, 14   &{\bf 30}, 29, 16\\
&30 &9, 6, 6       &14, 10, 8    &21, 15, 10   &30, 20, 12   &30, 26, 14   &{\bf 31}, 30, 16\\
&31 &9, 6, 6       &14, 10, 8    &21, 15, 10   &30, 20, 12   &31, 26, 14   &{\bf 32}, 31, 16\\
&32 &9, 6, 6       &14, 10, 8    &21, 15, 10   &30, 20, 12   &32, 26, 14   &{\bf 33}, 32, 16\\
&33 &9, 6, 6       &14, 10, 8    &21, 15, 10   &30, 20, 12   &33, 26, 14   &{\bf 34}, 33, 16\\
\hline
\end{tabular}
\end{center}
\label{tab3}
\end{table}
\end{example}
\begin{remark}
Most of the improved left-most values in Table \ref{tab3}
also follow from Theorems \ref{th:gen0}, \ref{th:fcr}--\ref{cubic}. (Concerning the latter, if the CPD of an $I\times I\times I$  tensor of rank $R$ is generically unique for $R\leq k(I)$, then a forteriori the CPD of a rank-$R$ $I\times I\times K$  tensor with $K > I$ is generically unique for $R\leq k(I)$.) An important difference is that our bounds remain valid for many constrained CPDs. We briefly give two examples. Rather than going into details, let us suffice by mentioning that (generic) uniqueness in these examples may be defined and studied in the same way as it was done in Subsections \ref{sec:last1} and \ref{generic:indscal} for unsymmetric CPD and SFS-CPD, respectively.
\begin{enumerate}
\item
Let the third factor matrix of $I\times I\times K$ tensor $\mathcal T$
belong to a class of structured matrices $\Omega$  such that the condition $I+k_{\mathbf C}\geq R+2$ is valid for generic $\mathbf C\in\Omega$.
An example of a class for which this may be true, is the class of
$K\times R$ Hankel matrices.
In Subsection \ref{sec:last1} Proposition  \ref{mostgeneralgeneric} leads to Proposition \ref{prop111overallbig} for unconstrained CPD. Similarly,
Proposition \ref{mostgeneralgeneric} with condition   \eqref{eq:IJ} replaced by condition  $I+k_{\mathbf C}\geq R+2$
leads to an analogue of Proposition \ref{prop111overallbig} that guarantees that
 a  CPD with the third factor matrix belonging to $\Omega$ is generically unique for $R$ bounded by the values in Table \ref{tab3}
 (left values for unconstrained first and second factor matrices, and middle values in the case of partial symmetry).
 \item
Let us now assume that the third factor matrix is unstructured and that the first two matrices have Toeplitz structure. Random Toeplitz matrices also yield the values in Table \ref{tab3}. Hence, such a constrained CPD is again generically unique for $R$ bounded by the values in Table \ref{tab3}.
\end{enumerate}
\end{remark}
\begin{remark}\label{problastremark}
In the case $r_{\mathbf C}=R$, both \cond{C}{2} and \cond{U}{2} are sufficient for overall CPD uniqueness, see \eqref{eq:1.11}.
In the case of \cond{C}{2}, we generically have condition \eqref{C2generic}. The more relaxed generic condition derived from \cond{U}{2}
is given in Theorem \ref{th:fcr}. For the case $r_{\mathbf C}<R$ we have  obtained the deterministic result in Corollary \ref{ProFullUniq1onematrixcor1} and its its generic version Proposition \ref{prop111overallbig}, both based on condition  \cond{C}{m}. This suggests that by starting from Corollary \ref{ProFullUniq1onematrix}, based on  \cond{U}{m}, more relaxed generic uniqueness results may be obtained.

On the other hand, in Example \ref{ex:77710}
we have studied CPD of a rank-10 ($7\times 7\times 7$) tensor. Simulations along the lines of Example \ref{ex:77710}
suggest that condition \cond{W}{5} holds for random factor matrices, which then implies generic overall CPD uniqueness for $R=10$. Starting from
\cond{C}{5} we have only demonstrated generic uniqueness up to $R=9$, see the entry for $I=K=7$ in Table \ref{tab3}.
This suggests that by starting from Proposition \ref{ProFullUniq1onematrixWm},
based on \cond{W}{m}, further relaxed generic uniqueness results may be obtained.
\end{remark}

\section{Conclusion}

Using results obtained in Part I \cite{PartI}, we have obtained new conditions guaranteeing uniqueness of a CPD. In the framework of the new uniqueness theorems, Kruskal's theorem and the existing uniqueness theorems for the case $R = r_{\mathbf C}$ are special cases. We have derived both deterministic and generic conditions.

The results can be easily adapted to the case of PDs in which one or several factor matrices are equal, such as INDSCAL. In the deterministic conditions the equalities can simply be substituted. In the generic setting one checks the same rank constraints as in the unconstrained case for a random example. The difference is that there are fewer independent entries to draw randomly. This may decrease the value of $R$ up to which uniqueness is guaranteed. However, the procedure for determining this maximal value is completely analogous. The same holds true for PDs in which one or several factor matrices have structure (Toeplitz, Hankel, Vandermonde, etc.).
\section{Acknowledgments}
 The authors would like to thank the anonymous reviewers
for their valuable comments and their suggestions to improve
the presentation of the paper. The authors are  also grateful for useful  suggestions from
Professor A. Stegeman (University of Groningen, The Netherlands).

\bibliographystyle{siam}
\bibliography{PartII}

\end{document}